\pgfplotsset{every axis/.append style={
                    axis x line=middle,    
                    axis y line=middle,    
                    axis line style={-,color=blue}, 
                    xlabel={$x$},          
                    ylabel={$y$},          
            }}
\newcommand\enet[1]{\renewcommand\theenumi{#1} 
\renewcommand\labelenumi{\theenumi}}
\DeclareMathOperator{\orb}{orb}
\DeclareMathOperator{\ncl}{NCl}
\DeclareMathOperator{\lcm}{lcm}
\DeclareMathOperator{\Sing}{Sing}
\DeclareMathOperator{\Pic}{Pic}
\DeclareMathOperator{\Hom}{Hom}
\DeclareMathOperator{\SF}{\Gamma_F}
\DeclareMathOperator{\SB}{\Gamma_S}
\DeclareMathOperator{\SBp}{\Gamma^\prime_S}
\DeclareMathOperator{\Alb}{Alb}
\DeclareMathOperator{\im}{Im}
\DeclareMathOperator{\depth}{depth}
\DeclareMathOperator{\cm}{\pi_{cl}}
\def\cQ{{\mathcal Q}}
\def\cB{{\mathcal B}}
\def\cM{\mathcal{M}}
\def\cV{\mathcal{V}}
\def\ZZ{\mathbb{Z}}
\def\TT{\mathbb{T}}
\def\CC{\mathbb{C}}
\def\DD{\mathbb{D}}
\def\QQ{\mathbb{Q}}
\def\FF{\mathbb{F}}
\def\QQ{\mathbb{Q}}
\def\PP{\mathbb{P}}
\def\hS{S'}
\def\mult{{\textrm{mult}}}
\newcommand\NS{\textrm{NS}}
\newcommand{\one}{\mathbf{1}}
\def\rightmap#1{\smash{\mathop{\rightarrow}\limits^{#1}}}
\newcommand{\Conv}{\mathop{\scalebox{1.5}{\raisebox{-0.2ex}{$\ast$}}}}%
\newtheorem{thm}{Theorem}[section]  
\newtheorem{main-thm}{Theorem}  
\newtheorem{prop}{Proposition}[section]%
\newtheorem{proper}{Condition}[section]%
\newtheorem{main-conj}{Conjecture}[section]%
\newtheorem{cor}{Corollary}[section]
\newtheorem{lemma}{Lemma}[section]
\theoremstyle{remark}
\newtheorem{rem}{Remark}[section]
\newtheorem{notation}{Notation}[section]
\theoremstyle{definition}
\newtheorem{dfn}{Definition}[section]
\newtheorem{exam}{Example}[section]
\let\c@lemma\c@thm
\let\c@prop\c@thm
\let\c@propdef\c@thm
\let\c@proper\c@thm
\let\c@problem\c@thm
\let\c@conj\c@thm
\let\c@cor\c@thm
\let\c@rem\c@thm
\let\c@dfn\c@thm
\let\c@notation\c@thm
\let\c@exam\c@thm
\title[Quasi-projective varieties whose fundamental group...]
{Quasi-projective varieties whose fundamental group is a free product of cyclic groups}
\author[Jos\'e I. Cogolludo-Agust{\'i}n and Eva Elduque]{J.I.~Cogolludo-Agust{\'i}n and Eva Elduque}
\address{Departamento de Matem\'aticas, IUMA\\
Universidad de Zaragoza \\
C.~Pedro Cerbuna 12 \\
50009 Zaragoza, Spain.} 
\email{jicogo@unizar.es} 
\address{Departamento de Matem\'aticas, ICMAT\\ 
Universidad Aut\'onoma de Madrid \\
28049 Madrid, Spain.}
\email{eva.elduque@uam.es}
\begin{document}

\thanks{The authors are partially supported by PID2020-114750GB-C31, funded by 
MCIN/AEI/10.13039/501100011033. The first author is also partially funded by the Departamento de Ciencia, 
Universidad y Sociedad del Conocimiento of the Gobierno de Arag\'on 
(Grupo de referencia E22\_20R ``\'Algebra y Geometr\'{\i}a'').
}

\subjclass[2020]{Primary 32S25, 32S20, 14F35, 14C21; Secondary 32S05, 11J95}

\begin{abstract}
In the context of Serre's question, we study smooth complex quasi-projec\-tive varieties
whose fundamental group is a free product of cyclic groups. In particular, we focus on the
case of surfaces and prove the existence of an admissible map from such a quasi-projective
surface to a smooth complex quasi-projective curve. Associated with this result, we prove
addition-deletion lemmas which describe a natural operation correlating this family of
quasi-projective surfaces and groups. Our methods also allow us to produce examples of
curves in smooth projective surfaces whose complements have free products of cyclic groups
as fundamental groups, generalizing classical results on $C_{p,q}$ curves and torus-type
projective sextics, and describing the conditions under which this phenomenon occurs.
\end{abstract}

\maketitle

\section{Introduction}

\subsection*{Goals and motivation}
This paper is devoted to the general problem of describing the topology of smooth complex quasi-projective varieties. 
From the point of view of first homotopy groups, using Lefschetz-type theorems it is enough to focus on complements 
of curves in a smooth projective surface. In this context, we address Serre's question on the type of groups that can 
appear as fundamental groups of quasi-projective varieties. In particular, we consider the family of finite free products 
of cyclic groups. As it turns out, all of them can be realized as fundamental groups of quasi-projective surfaces,
but only some as fundamental groups of complements of plane projective curves (see~\S\ref{sec:main:P2}). Moreover, in 
Theorem~\ref{thm:main:intro} we describe the geometric structure of such quasi-projective surfaces as containing a natural 
fibration onto a projective curve where the finite order elements correspond with multiple fibers of the fibration.

Interest for this problem in the complex projective plane goes back to Zariski and his foundational paper~\cite{Zariski-problem}.
He considered the space of plane projective curves of degree six having six simple cusps and characterized whether these 
singular points are placed on a conic by the property $\pi_1(\PP^2\setminus D)=\ZZ_2*\ZZ_3$ for any such sextic 
$D\subset \PP^2$, or equivalently by having a $(2,3)$-toric decomposition (i.e. given by an equation $f^2_3+f^3_2=0$ where 
$f_i$ is a generic form of degree $i$ in $\CC[x,y,z]$). This is equivalent to the existence of a morphism 
$F:\PP^2\setminus D\to \PP^1\setminus \{[1:-1]\}$, defined by $F(x,y,z)=[f^2_3:f^3_2]$ with two 
multiple fibers. In the 70's, Oka proved his classical result on $C_{p,q}$ curves 
in~\cite[\S8]{Oka-some-plane-curves} (see also Dimca~\cite[Prop. \S4(4.16)]{Dimca-singularities}),
exhibiting a family of irreducible curves 
\begin{equation}
\label{eq:cpq}
C_{p,q}=\{(x^p+y^p)^q+(y^q+z^q)^p=0\}
\end{equation}
for any $p,q>1$ coprime such that $\pi_1(\PP^2\setminus C_{p,q})\cong\ZZ_p*\ZZ_q$.
This was generalized in the 80's by N\'emethi in~\cite{Nemethi-fundamental}.

This problem appeared in several other contexts, mainly focusing on a possible connection between 
Alexander polynomials of complements $\PP^2\setminus D$ and toric (or more generally quasi-toric)
decompositions 
(\cite{Kulikov-albanese,ji-Libgober-mw,kawashima-kenta-torus,Oka-geometry-02,Oka-Pho-classification,Oka-Pho-fundamental,Tokunaga-albanese,Tokunaga-torus-albanese,ACM-multiple-fibers}). 
For instance, Oka's conjecture~\cite{Oka-Eyral-fundamental} 
(discussed and proved by Degtyarev in~\cite{Degtyarev-OkaConjectureII,Degtyarev-OkaConjecture}), 
states that an irreducible plane sextic is of torus type if and only if its Alexander polynomial is 
non-trivial.

A classical tool to describe the topology of smooth quasi-projective surfaces is the existence of morphisms onto smooth 
curves (a complex quasi-projective manifold of dimension 1). This is described in the Castelnuovo-de Franchis theorem for
the existence of morphisms onto smooth curves of genus $g\geq 2$, and in Arapura's structure theorem~\cite{Arapura-geometry}, 
as well as its twisted version~\cite{ACM-characteristic}. If $X$ denotes a smooth projective surface 
and $D\subset X$ a (non-empty) reduced curve, the latter paper uses certain properties of the fundamental group to 
describe the existence of a dominant morphism $F$ from $X\setminus D$ to a smooth projective curve $S$, 
which induces an orbifold structure $S^{\orb}$ on $S$ determined by the
multiplicity of the fibers of $F$. Since the fibers of this morphism are generically connected, the authors show that 
the fundamental group $\pi_1(X\setminus D)$ surjects onto the orbifold fundamental group of $S^{\orb}$, 
which, since $S\setminus F(X\setminus D)\neq \emptyset$, is a finitely generated free product of cyclic groups. 
The extremal case for this morphism induced by $F$ occurs  when the surjection becomes an 
isomorphism between $\pi_1(X\setminus D)$ and the orbifold fundamental group of $S^{\orb}$. 
This motivates our use of these techniques for the study of quasi-projective varieties whose fundamental 
group is a free product of cyclic groups, and raises the question of the existence of such a morphism 
$F$ realizing an isomorphism between (orbifold) fundamental groups in this setting. Indeed, similar 
connections between fundamental groups of smooth varieties and orbifold fundamental groups have been 
studied by Arapura~\cite{Arapura-toward} and Catanese~\cite{Catanese-differentiable} in the projective 
case ($D=\emptyset$), and by Bauer~\cite{Bauer-irrational} and Catanese~\cite{Catanese-Fibred} in the 
quasi-projective case where $\pi_1(X\setminus D)$ is free (or more generally, it admits an epimorphism 
onto a free group with a finitely generated kernel).

In this paper, we provide both necessary and sufficient geometric conditions for a smooth quasi-projective variety to 
have a free (finite) product of cyclic groups as its fundamental group. However, before stating our results in full 
generality, let us provide some intuition by exhibiting some of their consequences in the case of plane curve complements. 
We show that if the fundamental group of the complement of a curve in $\PP^2$ is a free product of cyclic groups, then 
it is isomorphic to $\FF_r*\ZZ_p*\ZZ_q$ for some $r\geq 0$ and some $p,q\geq 1$ such that $\gcd(p,q)=1$. 
We prove the following structure theorem, which provides necessary conditions and greatly constrains the type of 
polynomial equations that can give rise to curves in $\PP^2$ whose fundamental group of their complement is a free 
product of two non-trivial cyclic groups. The following is an immediate consequence of Corollary~\ref{thm:main:P2}.

\begin{cor}
\label{cor:pq}
If a curve complement in $\PP^2$ has fundamental group $\ZZ_p*\ZZ_q$, with $p,q\in\ZZ_{>1}$ coprime integers, 
then the curve is given by a polynomial equation of the form $f_p^q+f_q^p=0$, for some $f_p$ and $f_q$ homogeneous 
polynomials in $\CC[x,y,z]$ of degrees $p$ and $q$ with no common factors, and such that  neither of them is a $k$-th 
power of another polynomial for any $k\geq 2$. 
\end{cor}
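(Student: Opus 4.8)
The plan is to derive Corollary~\ref{cor:pq} as a direct application of the structure theorem for plane curves, Theorem~\ref{thm:main-P2}, which (as promised in the text) says that if $\pi_1(\PP^2\setminus D)$ is a free product of cyclic groups then there is an admissible morphism $F\colon \PP^2\setminus D\to S^{\orb}$ realizing an isomorphism onto the orbifold fundamental group of $S^{\orb}$, where $S$ is a quasi-projective curve and the orbifold structure records the multiple fibers. The first step is to observe that when $\pi_1(\PP^2\setminus D)\cong \ZZ_p*\ZZ_q$ with $p,q>1$, the orbifold fundamental group $\pi_1^{\orb}(S^{\orb})$ must be isomorphic to $\ZZ_p*\ZZ_q$. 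Since $\ZZ_p*\ZZ_q$ is a finite free product of \emph{finite} cyclic groups (no free part, i.e. $r=0$), the base curve $S$ must be rational, $S=\PP^1$ minus at least one point, and in fact $S^{\orb}$ must be exactly $\PP^1$ minus one point with exactly two orbifold points, of multiplicities $p$ and $q$: indeed, $\pi_1^{\orb}$ of a genus-$g$ curve with $b\geq 1$ punctures and orbifold points of multiplicities $m_1,\dots,m_k$ is the free product of a free group of rank $2g+b-1$ with $\ZZ_{m_1}*\cdots*\ZZ_{m_k}$, and matching this with $\ZZ_p*\ZZ_q$ forces $g=0$, $b=1$, $k=2$, $\{m_1,m_2\}=\{p,q\}$. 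The coprimality $\gcd(p,q)=1$ is forced by abelianization: $H_1(\PP^2\setminus D)$ is the torsion of $\ZZ_p*\ZZ_q$ abelianized, i.e. $\ZZ_p\oplus\ZZ_q\cong\ZZ_{pq}$ only when $\gcd(p,q)=1$; and $H_1(\PP^2\setminus D)$ being cyclic is a general constraint (from the fact that it is generated by meridians around the irreducible components with the degree relation) — alternatively this is already part of the hypothesis feeding into Theorem~\ref{thm:main-P2}. The details of which precise normal form the earlier theorem hands us will need to be checked, but this is the expected shape.

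Next I would translate the existence of this admissible morphism $F\colon \PP^2\setminus D\to (\PP^1\setminus\{pt\})^{\orb}$ with two multiple fibers of multiplicities $p$ and $q$ into the polynomial equation for $D$. After a projective change of coordinates on $\PP^1$, we may assume the puncture is at $[1:-1]$ and the two orbifold points are at $[1:0]$ and $[0:1]$. The morphism $F\colon \PP^2\setminus D\to\PP^1$ extends to a rational map $\PP^1\dashrightarrow\PP^2$, hence is given by $[\alpha:\beta]$ for two forms $\alpha,\beta\in\CC[x,y,z]$ of the same degree with no common factor; pulling back the three distinguished points, $D$ is contained in $\{\alpha\beta(\alpha+\beta)=0\}$, and the hypothesis that the fibers over $[1:0]$ and $[0:1]$ have multiplicities $p$ and $q$ means $\alpha = f_p^q\cdot g$ and $\beta = f_q^p\cdot h$ (roughly) where the multiplicity data says $\alpha$ is a $q$-th power up to the reduced structure and $\beta$ a $p$-th power. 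Cleaning this up: the reduced fiber over $[1:0]$ is $\{f_p=0\}$ with $\deg f_p = p$, the reduced fiber over $[0:1]$ is $\{f_q=0\}$ with $\deg f_q=q$, equality of degrees $\deg\alpha=\deg\beta$ gives $pq$, so $\alpha = c_1 f_p^q$, $\beta = c_2 f_q^p$, and after rescaling $D = \{f_p^q + f_q^p = 0\}$. The conditions that $f_p,f_q$ have no common factor (the two multiple fibers are disjoint away from $D$, equivalently $F$ is well-defined) and that neither is a proper power (otherwise the actual fiber multiplicity would be a proper multiple of $p$ or $q$, contradicting that $\pi_1^{\orb}$ has exactly $\ZZ_p*\ZZ_q$ and not $\ZZ_{kp}*\ZZ_q$) fall out of the matching of orbifold data.

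The main obstacle, and the step requiring the most care, is the passage from ``$F$ is an admissible morphism onto the orbifold curve'' to the precise statement that $D$ is \emph{exactly} $\{f_p^q+f_q^p=0\}$ rather than merely a curve whose complement admits such a morphism — one must argue that $D$ is neither larger nor smaller than this specific curve. For the ``not larger'' direction one uses that any extra component of $D$ not in the union of the three fibers would have to be a fiber of $F$ (since $F$ is defined on all of $\PP^2\setminus D$, its indeterminacy locus is contained in $D$, and components of $D$ map either to points or dominantly; a component mapping dominantly would be a smooth fiber, which by genericity does not affect $\pi_1^{\orb}$ but would need to be accounted for) — here one leans on the precise meaning of ``admissible'' from the body of the paper, which presumably forbids this. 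For the ``not smaller'' direction: $D$ must contain the full reduced fibers over the orbifold points (otherwise the local fiber multiplicity seen by $\pi_1$ would not be $p$, resp.\ $q$) and must contain the reduced fiber over the puncture $[1:-1]$ (otherwise $F$ would not land in $\PP^1\setminus\{pt\}$; more precisely removing that fiber from $D$ would change $\pi_1$). Thus I expect the proof in the paper to be short, invoking Theorem~\ref{thm:main-P2} and then doing exactly this elementary-but-careful bookkeeping with fibers and multiplicities; the genuinely nontrivial content is entirely in Theorem~\ref{thm:main-P2} itself.
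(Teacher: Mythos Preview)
Your overall strategy is correct and matches the paper: Corollary~\ref{cor:pq} is indeed an immediate consequence of Corollary~\ref{thm:main-P2} (the $\PP^2$ version of the main theorem), specialized to $r=0$, $p>q>1$. That result already packages everything you need: it tells you $D$ is the union of $r+1=1$ irreducible fibers of a pencil $F=[f_q^p:f_p^q]$ with $\gcd(p,q)=1$, that $f_p,f_q$ have the stated degrees, no common factors, and are not proper powers, and that the two multiple fibers lie over $[0:1],[1:0]\notin\Sigma_0$. Since $\Sigma_0$ consists of a single point distinct from $[0:1],[1:0]$, after absorbing a scalar into $f_p$ or $f_q$ the fiber $D$ becomes $\{f_p^q+f_q^p=0\}$. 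So the paper's proof is essentially one line; your re-derivation of the orbifold shape and the coprimality is redundant once you invoke Corollary~\ref{thm:main-P2}.

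There is, however, a genuine error in your final paragraph. You claim that $D$ ``must contain the full reduced fibers over the orbifold points'', but this is exactly backwards. The orbifold points of $S_{(1,(p,q))}$ correspond to multiple fibers that are \emph{not} removed: the $\ZZ_p$ and $\ZZ_q$ factors in $\pi_1^{\orb}$ arise precisely because the fibers over $[0:1]$ and $[1:0]$ remain in $\PP^2\setminus D$ with multiplicities $p$ and $q$. If $D$ contained those fibers, they would correspond to punctures (label $0$) rather than orbifold points, and $\pi_1(\PP^2\setminus D)$ would acquire extra free factors. The curve $D$ is exactly the single irreducible fiber over the one puncture in $\Sigma_0$, nothing more; this is part~\ref{thm:main-simply-3} of Corollary~\ref{thm:main-P2}, and your ``not larger / not smaller'' discussion is both unnecessary and, as written, incorrect.
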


A partial converse is also proved by showing that the fundamental group of the complement 
of any curve defined by $f_p^q+f_q^p=0$ is \emph{generically} a free product of cyclic groups.

\begin{thm}
\label{thm:Okapq}
Let $f_p$ (resp. $f_q$) be a homogeneous polynomial of degree $p$ (resp. $q$) in $\CC[x,y,z]$ with $\gcd(p,q)=1$. Assume that
\begin{itemize}
\item $f_p$ and $f_q$ define an admissible map $F=[f_p^q:f_q^p]:\PP^2\setminus \mathcal B\to\PP^1$, where $\mathcal B=V(f_p)\cap V(f_q)$ is a finite set.
\item The multiple fibers of $F$ lie over a subset of $\{[0:1],[1:0]\}$ (this always holds if $p,q\geq 2$).
\end{itemize}
Let $r\geq 0$, and let $C_0,\ldots, C_r$ be the closures in $\PP^2$ of $r+1$ distinct
generic fibers of $F$. Let $C=\cup_{i=0}^r C_i$.
Then,
\begin{equation*}
\pi_1(\PP^2\setminus C)=\FF_r*\ZZ_p*\ZZ_q.
\end{equation*}
Moreover, assume that $V(f_p)$ is irreducible and $\pi_1(\PP^2\setminus V(f_p))\cong \ZZ_p$, then
\begin{equation*}
\pi_1\left(\PP^2\setminus (C\cup V(f_p))\right)=\FF_{r+1}*\ZZ_p.
\end{equation*}
\end{thm}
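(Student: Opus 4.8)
The plan is to realize $F=[f_p^q:f_q^p]$ as an admissible map in the sense used throughout the paper and then run the addition-deletion machinery. First I would record that the hypotheses guarantee that $F\colon \PP^2\setminus\mathcal B\to\PP^1$ restricts to a surjective morphism $F_0\colon \PP^2\setminus(\mathcal B\cup V(f_p)\cup V(f_q))\to \PP^1\setminus\{[0:1],[1:0]\}$ with connected generic fibers; the orbifold base $S^{\orb}$ is $\PP^1$ with two orbifold points of multiplicities $p$ and $q$ sitting over $[1:0]$ and $[0:1]$ respectively (these come precisely from the divisors $\{f_p=0\}$ and $\{f_q=0\}$, whose multiplicities along the pencil are $q$ and $p$; the non-primitivity hypothesis ``$f_p$ (resp.\ $f_q$) is not a $k$-th power'' is exactly what forbids \emph{further} multiplicities, i.e.\ what makes these the only orbifold points with the stated multiplicities). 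Since the punctured base $\PP^1\setminus\{2\text{ points}\}$ has infinite cyclic $\pi_1$, its orbifold fundamental group is $\ZZ_p*\ZZ_q$; Arapura's structure theorem (as recalled in the introduction) then gives a surjection $\pi_1(\PP^2\setminus(C\cup V(f_p)\cup V(f_q)))\twoheadrightarrow \ZZ_p*\ZZ_q$.

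Next I would compute $\pi_1(\PP^2\setminus(C\cup V(f_p)\cup V(f_q)))$ directly. Removing the $r+1$ generic fibers $C_0,\dots,C_r$ together with the two ``multiple-fiber'' curves $V(f_p),V(f_q)$ turns $F$ into a fiber bundle (by Ehresmann, after shrinking to avoid the finitely many atypical fibers and the base points $\mathcal B$) over $\PP^1$ minus $r+3$ points, with generic fiber a smooth affine curve $\Phi$. The homotopy exact sequence of this bundle, together with the fact that $\pi_1$ of the base is free of rank $r+2$, shows $\pi_1$ is generated by $\pi_1(\Phi)$ and $r+2$ lifts of the base loops; the surjection onto $\ZZ_p*\ZZ_q$ constructed above, combined with a count of generators/relations (this is where the paper's general addition-deletion lemmas for fibers of an admissible map do the real work: adding back a generic fiber $C_i$ contributes a free $\ZZ$-factor, adding back a multiple fiber with multiplicity $m$ contributes a $\ZZ_m$-factor subject to the relation killing the appropriate power), forces the group to be exactly $\FF_{r+2}*\ZZ_p*\ZZ_q$ on the nose — there is no room for extra relations because the abelianization and the orbifold quotient already pin down the structure. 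Then I would add back the two curves $V(f_p)$ and $V(f_q)$ one at a time using the deletion direction of the same lemmas: deleting $V(f_p)$ (a multiplicity-$q$ fiber component) from the complement kills one free generator and the $\ZZ_q$ relation, replacing a $\ZZ_q$ factor's generator by an honest loop; doing this for both $V(f_p)$ and $V(f_q)$ collapses $\FF_{r+2}*\ZZ_p*\ZZ_q$ to $\FF_r*\ZZ_p*\ZZ_q$, giving the first displayed formula. (One must be slightly careful that the base points $\mathcal B=V(f_p)\cap V(f_q)$, which lie on $C$ in general position, do not contribute extra generators — they are points on already-removed curves, so by transversality they impose nothing new.)

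For the ``Moreover'' part, I would not re-add $V(f_p)$ at all: start from $\pi_1(\PP^2\setminus C)=\FF_r*\ZZ_p*\ZZ_q$ and instead put $V(f_p)$ \emph{back into the ambient complement}, i.e.\ compute $\pi_1(\PP^2\setminus(C\cup V(f_p)))$ from $\pi_1(\PP^2\setminus C)$ by the addition lemma applied to the multiple fiber $V(f_p)$. Adding a curve to a complement generally \emph{adds} a generator (a meridian of $V(f_p)$) but here that meridian is, up to conjugacy, the image of the generator of the $\ZZ_q$-orbifold factor to the $q$-th ... more precisely, the meridian $\mu_p$ of $V(f_p)$ satisfies in $\pi_1(\PP^2\setminus C)$ the relation identifying $\mu_p^{\,?}$ with the corresponding toric loop; the hypothesis $\pi_1(\PP^2\setminus V(f_p))\cong\ZZ_p$ ensures $V(f_p)$ is itself ``cyclic-generic'' so that re-inserting it trades the $\ZZ_q$ factor for an extra free $\ZZ$ and a $\ZZ_p$, yielding $\FF_{r+1}*\ZZ_p$. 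I would make this precise by the van Kampen-type presentation: $\pi_1(\PP^2\setminus(C\cup V(f_p)))$ is obtained from $\pi_1(\PP^2\setminus C)=\FF_r*\ZZ_p*\ZZ_q$ by adding $\mu_p$ and the relations coming from the (smooth, by genericity) intersections of $V(f_p)$ with the $C_i$, which kill the $\ZZ_q$-relation and identify $\mu_p$ with the $\ZZ_p$-generator's natural lift, and the hypothesis $\pi_1(\PP^2\setminus V(f_p))\cong\ZZ_p$ guarantees $V(f_p)$ contributes no further relations of its own.

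The main obstacle I anticipate is the bookkeeping in the second paragraph: verifying that the group obtained from the Ehresmann fibration is \emph{exactly} $\FF_{r+2}*\ZZ_p*\ZZ_q$ and not merely a quotient admitting a surjection to $\ZZ_p*\ZZ_q$ — i.e.\ ruling out hidden relations among the fiber $\pi_1(\Phi)$ and the section loops. This is precisely the role of the paper's addition-deletion lemmas and of the ``generic fiber'' hypothesis, and the cleanest route is probably to induct on $r$, using the $r=0$ case (where the orbifold isomorphism of Arapura, \emph{upgraded to an isomorphism} by the extremality discussion in the introduction, gives $\pi_1=\ZZ_p*\ZZ_q$ directly once $V(f_p),V(f_q)$ are adjoined appropriately) as the base, and the fiber-addition lemma for the inductive step. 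A secondary subtlety is checking that the multiple fibers really lie only over $\{[0:1],[1:0]\}$ under the stated hypotheses (automatic when $p,q\ge 2$, and needing a small separate argument when one of them is $1$, in which case that ``orbifold point'' is trivial and the corresponding cyclic factor disappears — consistent with $\ZZ_1$ being trivial).
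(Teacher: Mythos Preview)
Your overall strategy has a genuine gap, and it stems from a confusion about orbifold structures. When you remove $V(f_p)$ and $V(f_q)$ from $\PP^2$, the points $[0:1]$ and $[1:0]$ get label $0$ in the orbifold structure on the base; they are \emph{not} orbifold points of multiplicity $p,q$ any longer. Hence the maximal orbifold group of the base for $F_|:\PP^2\setminus(C\cup V(f_p)\cup V(f_q))\to\PP^1\setminus\{r+3\text{ points}\}$ is $\FF_{r+2}$, not anything containing $\ZZ_p*\ZZ_q$. Your claimed intermediate group $\FF_{r+2}*\ZZ_p*\ZZ_q$ cannot be right: its abelianization is $\ZZ^{r+2}\times\ZZ_{pq}$, whereas by Remark~\ref{rem:coprime} the complement of $r+1$ curves of degree $pq$ together with curves of degrees $p$ and $q$ has $H_1\cong\ZZ^{r+2}$ (since $\gcd(pq,p,q)=1$). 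So the ``abelianization pins it down'' argument is aiming at the wrong target. Even if you corrected the target to $\FF_{r+2}$, you have not established that $F_*$ is an isomorphism on this larger complement, and that is precisely the hard step; nothing in your Ehresmann discussion forces the fiber loops to die.

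The paper proceeds quite differently and avoids the large complement altogether. For $r=0$ it invokes Theorem~\ref{thm:basecasesimply}: a single generic fiber $C_0$ is irreducible of degree $pq$, so $H_1(\PP^2\setminus C_0)\cong\ZZ_{pq}$ by Remark~\ref{rem:coprime}, and Theorem~\ref{thm:basecasesimply} upgrades this homological fact directly to the isomorphism $F_*:\pi_1(\PP^2\setminus C_0)\to\pi_1^{\orb}(\PP^1_{(1,(p,q))})\cong\ZZ_p*\ZZ_q$. The case $r>0$ then follows by repeated application of the Generic Addition-Deletion Lemma~\ref{lemma:generic-fiber-orbi}. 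For the ``Moreover'' part, the paper does \emph{not} start from $\pi_1(\PP^2\setminus C)$ and add $V(f_p)$; rather it starts from $\PP^2\setminus V(f_p)$ alone: the hypothesis $\pi_1(\PP^2\setminus V(f_p))\cong\ZZ_p$ together with the surjection $F_*$ onto $\pi_1^{\orb}(\PP^1_{(1,(p))})\cong\ZZ_p$ (the remaining orbifold point being the multiplicity-$p$ fiber $V(f_q)$) forces $F_*$ to be an isomorphism, and then the Generic Addition-Deletion Lemma applied $r+1$ times yields $\FF_{r+1}*\ZZ_p$. Your proposed route for this part tries to add a \emph{non}-generic (multiple) fiber using the addition direction, which Theorem~\ref{lemma:generic-fiber-orbi} does not cover.
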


In particular, this provides infinitely many examples of curves in $\PP^2$ like the ones in the previously mentioned 
examples by Zariski, Oka and Némethi, that is, such that the fundamental group of their complement is a free product of 
two non-trivial cyclic groups. Zariski's and Oka's results were proved using braid monodromy computations (which can be 
very complicated) applied to very expertly chosen specific examples. The methods presented here are different and their 
scope is wider, as they rely on exploiting the extra structure on these varieties endowed by morphisms to smooth complex 
quasi-projective curves (such as $F$ in Theorem~\ref{thm:Okapq}). These morphisms are constructed using properties of 
the fundamental group itself, not invariants of it such as the Alexander polynomial. 

\subsection*{Overview of the main results}

The main objects of this paper are curves in a smooth projective surface.
Whenever we refer to a divisor as a curve, the divisor is meant to have a reduced structure.
One of the main theorems of this paper provides geometric necessary conditions for the complement of a 
curve $D$ in a smooth projective surface $X$ to have a free product of cyclic groups as its fundamental group. 
These conditions include the existence of an \emph{admissible map} to a smooth curve $S$ 
(see Section~\ref{sec:admissible} for the definition of admissible map), and are stated using orbifold 
notation $S_{(n+1,\bar m)}$, where $\Sigma_0\subset S$ is the set of $\#\Sigma_0=n+1$ points removed 
and $\bar m=(m_1,\dots,m_s)$ represents the orbifold structure on $s$ points of $S\setminus \Sigma_0$
(see Section~\ref{sec:orbifold} for the relevant definitions). 

\begin{thm}\label{thm:main:intro}
Let $X$ be a smooth connected projective surface, and let $D\subset X$ be a curve.
Suppose that $\pi_1(X\setminus D)\cong\FF_{r}*\ZZ_{m_1}*...*\ZZ_{m_s}$ is infinite.
Then, there exists a smooth projective curve $S$ of genus $g_S$ and an admissible map $F:X\dashrightarrow S$ such that:
\begin{enumerate}
\enet{\textrm(\roman{enumi})}
\item \label{lemma:existencePencil-2}
$F$ induces an orbifold morphism
\begin{equation*}
F_|:X\setminus D\to S_{(n+1,\bar m)},
\end{equation*}
where $S_{(n+1,\bar m)}$ is maximal with respect to $F_|$, $n\geq 0$ and $\bar m=(m_1,\ldots, m_s)$.
\item \label{lemma:existencePencil-3}
$F_*:\pi_1(X\setminus D)\to \pi_1^{\orb}(S_{(n+1,\bar m)})$ is an isomorphism.
\item \label{lemma:existencePencil-1}
$D=D_f\cup D_t$, where
\begin{itemize}
\item $D_f=\overline{F^{-1}(\Sigma_0)}$ is a fiber-type curve which is the union of the $n+1$ fibers
above the distinguished points $\Sigma_0\subset S$, with $n=r-2g_S$ and
\item the meridians of $D_t$ are trivial in~$\pi_1(X\setminus D)$.
\end{itemize}
\end{enumerate}
\end{thm}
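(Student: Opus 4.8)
The plan is to exploit the algebraic structure of $\pi_1(X\setminus D)\cong \FF_r*\ZZ_{m_1}*\cdots*\ZZ_{m_s}$ together with Arapura's structure theorem to produce the admissible map, and then to refine it until the surjection onto the orbifold fundamental group becomes an isomorphism. First I would observe that a finitely generated infinite free product of cyclic groups of this shape admits a surjection onto a nonabelian free group (provided $r\geq 2$, or by combining the $\ZZ_{m_i}$ factors into $\ZZ_{m_i}*\ZZ_{m_j}$ which maps onto $\FF_2$), or at least onto a group commensurable with such. By Arapura's theorem (and its quasi-projective incarnation as used in \cite{ACM-characteristic}), the existence of such a surjection, or more precisely the presence of a positive-dimensional component of the characteristic variety, forces the existence of a dominant morphism $F:X\setminus D\to S^{\orb}$ onto an orbifold curve with $\chi^{\orb}<0$, having connected generic fiber, such that $F_*$ is surjective onto $\pi_1^{\orb}(S^{\orb})$. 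This $F$ extends to a rational map $X\dashrightarrow \bar S$ and, after resolving indeterminacy and taking the Stein factorization, one arranges it to be admissible in the sense of Section~\ref{sec:admissible}. I would take $S_{(n+1,\bar m)}$ to be \emph{maximal} with respect to $F_|$, i.e. remove from $S$ every point whose full preimage lies in $D$ and record at every remaining point the gcd of the multiplicities of the fiber components not contained in $D$; this gives (i) essentially by construction.

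The heart of the argument is (ii), upgrading the surjection $F_*$ to an isomorphism. Here I would use that $\pi_1(X\setminus D)$ is itself a free product of cyclic groups, hence has a very rigid subgroup structure: by the Kurosh subgroup theorem every subgroup is again a free product of cyclic groups and a free group. The kernel $N=\ke F_*$ is normal in $\pi_1(X\setminus D)$ and is the fundamental group of (a finite cover of, or rather a subset of) the generic fiber; since the generic fiber of an admissible map on a surface is an open curve, $N$ is itself free (or, if $N$ is nontrivial and normal of infinite index in a free product, Kurosh plus normality forces $N$ to be free). I would argue that $N$ must be trivial: if $N\neq 1$, then $\pi_1^{\orb}(S^{\orb})$ would be a proper quotient of a free product of cyclic groups by a nontrivial normal free subgroup, and one shows—using the structure of $\pi_1^{\orb}(S^{\orb})$, which is again a free product of cyclic groups since $S\setminus F(X\setminus D)\neq\emptyset$—that this is incompatible with the count of torsion and the first Betti number unless $N=1$. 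Concretely, compare the numbers $r,\ s,\ (m_i)$ on both sides: the orbifold group contributes $2g_S$ free generators plus $n$ free generators from the punctures plus torsion factors $\ZZ_{m_j}$ from the orbifold points, and matching the Grushko decomposition of $\pi_1(X\setminus D)$ with this forces $N=1$ and the stated numerology $n=r-2g_S$.

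For (iii), once $F_*$ is an isomorphism I would split $D$ as $D_f\cup D_t$ where $D_f$ is the union of the fibers over the removed points $\Sigma_0$ (these are exactly the ``vertical'' components of $D$ whose meridians generate, modulo the fiber, the punctures' contribution to $\pi_1^{\orb}$) and $D_t$ collects the remaining components of $D$. A meridian of a component of $D_t$ maps under $F$ either to a point of $S$ that was \emph{not} removed or to an orbifold point, so under the isomorphism $F_*$ it must correspond to a trivial element: if it mapped to a nontrivial loop around a puncture, that puncture would have to have been removed (contradiction with maximality and with $D_t\not\subset D_f$); if it mapped to an orbifold loop $\gamma_j$, then $\gamma_j$ would be represented by a meridian, which combined with $F_*$ being an isomorphism and a standard argument on orbifold pencils (the orbifold multiplicity equals the multiplicity of the fiber component) again yields a contradiction unless the meridian is null-homotopic. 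Hence all meridians of $D_t$ die in $\pi_1(X\setminus D)$. The count $n=r-2g_S$ then follows from (ii) by comparing first Betti numbers: $b_1(\pi_1^{\orb}(S_{(n+1,\bar m)}))=2g_S+n$ must equal $r$, the free rank in the Grushko decomposition of $\pi_1(X\setminus D)$.

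\textbf{Main obstacle.} The step I expect to be hardest is (ii): showing the kernel of $F_*$ is trivial. Surjectivity is automatic from Arapura-type results, but promoting it to injectivity genuinely uses that the target is a \emph{surface} (so generic fibers are curves and $\ke F_*$ is free) \emph{and} that the source group is a free product of cyclic groups (so Kurosh rigidity applies); the delicate point is ruling out a nontrivial free normal subgroup, which requires a careful bookkeeping of how torsion and rank behave under the extension $1\to N\to\pi_1(X\setminus D)\to\pi_1^{\orb}(S^{\orb})\to 1$, and possibly an appeal to the fact that a free product of cyclic groups which is an extension of an infinite orbifold group by a free group must have that free group trivial. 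I would expect Section~\ref{sec:orbifold} and the admissibility setup to provide exactly the commutative-diagram and meridian-tracking lemmas needed to make this rigorous.
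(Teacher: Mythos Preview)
Your overall architecture---produce the admissible map via Arapura/characteristic varieties, then upgrade surjectivity to an isomorphism---matches the paper, but the execution of step (ii) has a real gap, and the $r=0$ case is not handled.

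First, a concrete error: $\ZZ_{m_i}*\ZZ_{m_j}$ does \emph{not} surject onto $\FF_2$ (any homomorphism to a torsion-free group kills both generators), so when $r=0$ you cannot reach Arapura's theorem this way. The paper treats $r=0$ by a genuinely different mechanism: take the cyclic cover of $X\setminus D$ associated to the maximal cyclic quotient $G\to\ZZ_m$; by Lemma~\ref{lemma:kernel} its fundamental group is free of rank $\rho\geq 1$, so the already-proved $r\geq 1$ case yields an admissible map on the cover, and functoriality of the Albanese (Lemma~\ref{lemma:albanese}) descends it. The isomorphism on $\pi_1$ is then obtained by observing that the image of $\pi_1(\text{fiber})$ in $G$ is a \emph{finite} normal subgroup of a nontrivial free product of finite cyclics, hence trivial by Kurosh.

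Second, for $r\geq 1$ your kernel argument is not complete. The claims that ``Kurosh plus normality forces $N$ to be free'' and that ``matching the Grushko decomposition forces $N=1$'' are both unjustified: normal subgroups of free products of cyclics need not be torsion-free, and you cannot compare Grushko data without first knowing the target's parameters $(r',s',m'_i)$. The paper does not analyze $N$ directly at all. Instead it uses a depth comparison in characteristic varieties: the map $F^*$ injects $\cV_k(\pi_1^{\orb}(S_{(n',\bar m')}))$ into $\cV_k(G)$, and the explicit formulas for depths in \cite[Prop.~2.10]{ACM-characteristic} force $n'=n+1$, $r'=r$, $s'=s$, and $m'_i=m_i$. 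Thus $\pi_1^{\orb}(S_{(n+1,\bar m)})$ is abstractly isomorphic to $G$, so $F_*$ is a surjective endomorphism of a Hopfian group (Lemma~\ref{lemma:EpiImpliesIso}) and hence an isomorphism. This ``match parameters first, then invoke Hopficity'' maneuver is the missing idea; it replaces the delicate extension analysis you flagged as the main obstacle.
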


In particular, the maximality condition of part~\ref{lemma:existencePencil-2} implies that 
$F_|:X\setminus D\to S\setminus\Sigma_0$ is a surjective algebraic map with exactly $s$ multiple 
fibers of multiplicities $(m_1,\ldots,m_s)$, determined by the torsion of $\pi_1(X\setminus D)$.
Theorem~\ref{thm:main:intro} is proved in Section~\ref{sec:main}.

The existence of the admissible map $F$ is guaranteed by~\cite{Arapura-geometry,ACM-characteristic}
if $r\geq 1$ and $\pi_1(X\setminus D)\neq \ZZ$. The case $\pi_1(X\setminus D)=\ZZ$ has to be
considered separately. If $r=0$ the structure of the group $\pi_1(X\setminus D)$
is used to construct a finite covering with free fundamental group. We then show that the Albanese morphism of this
covering produces an admissible map which, by the functoriality of the Albanese morphism, descends to the desired
admissible map~$F$ (after normalization in the target curve and Stein factorization).
Moreover, Corollary~\ref{cor:non-fiber-to-fiber} discusses the role of the divisor~$D_t$.

Theorem~\ref{thm:main:intro} is extended in Section~\ref{sec:main:extensions} to the case where 
$\pi_1(X\setminus D)$ is finite as long as $X$ is simply connected, under some extra assumptions that are 
always satisfied if $X=\PP^2$. As a result, we prove a refinement of 
Theorem~\ref{thm:main:intro} for the case $X=\PP^2$ in Section~\ref{sec:main:P2} (Corollary~\ref{thm:main:P2}) and
show that in this case $\pi_1(X\setminus D)\cong \FF_r*\ZZ_p*\ZZ_q$ for some $p,q\in\ZZ_{\geq 1}$ coprime and
$r\in\ZZ_{\geq 0}$, and that $D=D_f$ is a union of irreducible fibers of a pencil $F:\PP^2\dashrightarrow\PP^1$. The already stated Corollary~\ref{cor:pq} is a particular case ($r=0$ and $p,q>1$) of Corollary~\ref{thm:main:P2}.

The second type of main results are referred to as Addition-Deletion Theorems in Section~\ref{sec:additiondeletion}.
Consider $U$ a smooth quasi-projective surface and $F:U\rightarrow S$ an admissible map to a smooth projective curve $S$,
and define $U_B=U\setminus F^{-1}(B)$ for any finite subset $B\subset S$. In this context, we prove a 
Deletion Lemma~\ref{lemma:removing-fiber-orbi} that describes the fundamental group of $U_B$ if $F_|: U_{B\cup\{P\}}\to S$ induces an isomorphism between (orbifold) fundamental groups. 
 This is done regardless of whether $P\in B_F$ or not, for $B_F$ the set of atypical 
values of $F$, that is, whether $F^{-1}(P)$ is a typical fiber or not.
We also prove the following Generic Addition-Deletion Lemma in Section~\ref{sec:additiondeletion}

\begin{thm}[Generic Addition-Deletion Lemma]
\label{lemma:generic-fiber-orbi}
Let $U$ be a smooth quasi-projective surface and let $F:U\rightarrow S$ be an admissible map to a smooth
projective curve $S$.
Assume $B\subset S$, where $\# B=n\geq 1$, and let $P\in S\setminus \left(B_F\cup B\right)$.
Consider $S_{(n+1,\bar m)}$ (resp. $S_{(n,\bar m)}$) the maximal orbifold structure of $S$ with respect to 
$F_|:U_{B\cup\{P\}}\to S\setminus (B\cup \{P\})$ (resp. $F_|:U_{B}\to S\setminus B$).

Then the following are equivalent:
\begin{itemize}
 \item
$F_*:\pi_1(U_B)\to\pi_1^{\orb}(S_{(n,\bar m)})$ is an isomorphism,
 \item
$F_*:\pi_1(U_{B\cup\{P\}})\to\pi_1^{\orb}(S_{(n+1,\bar m)})$ is an isomorphism.
\end{itemize}
Moreover, in that case,
\begin{equation*}
\pi_1(U_{B\cup\{P\}})\cong \ZZ*\pi_1(U_B).
\end{equation*}
\end{thm}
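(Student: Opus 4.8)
The plan is to prove the equivalence by relating both statements to the Deletion Lemma~\ref{lemma:removing-fiber-orbi} and then extract the free factor $\ZZ$ from the geometry of removing a \emph{generic} (typical) fiber. Since $P\notin B_F$, the fiber $F^{-1}(P)$ is a typical fiber of the admissible map, so removing it does not change the orbifold structure on the remaining points: the multiple fibers of $F_|$ over $S\setminus(B\cup\{P\})$ are the same as those over $S\setminus B$, whence the tuple $\bar m$ is indeed common to $S_{(n+1,\bar m)}$ and $S_{(n,\bar m)}$ as the statement presupposes. The first step is therefore to record precisely how $\pi_1(U_{B\cup\{P\}})$ and $\pi_1(U_B)$ are related: restricting $F$ over a small disk $\Delta\subset S$ around $P$ on which $F$ is a locally trivial fibration with fiber a typical fiber $T$ (after removing the finitely many atypical and marked points from a neighborhood), one gets $U_{B\cup\{P\}} = U_B\setminus F^{-1}(P)$, and a tubular/Thom--Gysin type argument — or, more in the spirit of this paper, a direct van Kampen decomposition of $U_B$ into $F^{-1}(\Delta)$ and $U_{B\cup\{P\}}$ glued along $F^{-1}(\Delta^*)$ — yields a presentation of $\pi_1(U_{B\cup\{P\}})$ obtained from that of $\pi_1(U_B)$ by adding one generator $\gamma$ (a meridian of the removed fiber) with no new relations, i.e. $\pi_1(U_{B\cup\{P\}})\cong \pi_1(U_B) \ast \ZZ$, where the surjection $\pi_1(U_{B\cup\{P\}})\to\pi_1(U_B)$ kills $\gamma$. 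This last isomorphism is exactly the ``Moreover'' clause, and it is the Deletion Lemma specialized to the case $P\notin B_F$ of a typical fiber; I would either invoke Lemma~\ref{lemma:removing-fiber-orbi} directly or re-derive this special case, which is cleaner because no multiple-fiber contribution appears.

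Granting $\pi_1(U_{B\cup\{P\}})\cong \pi_1(U_B)\ast\ZZ$, the second step is to compare the two orbifold fundamental groups. Adding one more deleted point to the base increases the rank of the free part of $\pi_1^{\orb}$ by exactly one: concretely, $\pi_1^{\orb}(S_{(n+1,\bar m)})$ has a presentation obtained from that of $\pi_1^{\orb}(S_{(n,\bar m)})$ by adding one free generator (the meridian of the $(n+1)$-st removed point) and removing it from the single global relation $\prod[\alpha_i,\beta_i]\prod\mu_j\prod x_k = 1$; since that point is now \emph{removed} rather than orbifold-marked, the resulting group is literally $\pi_1^{\orb}(S_{(n,\bar m)})\ast\ZZ$. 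Thus we have a commutative square
\begin{equation*}
\begin{tikzcd}
\pi_1(U_{B\cup\{P\}}) \arrow[r, "F_*"] \arrow[d, two heads] & \pi_1^{\orb}(S_{(n+1,\bar m)}) \arrow[d, two heads] \\
\pi_1(U_B) \arrow[r, "F_*"] & \pi_1^{\orb}(S_{(n,\bar m)})
\end{tikzcd}
\end{equation*}
in which both vertical maps are the quotient by the extra $\ZZ$ free factor, with kernels generated by the meridian $\gamma$ of $F^{-1}(P)$ and its image $F_*(\gamma)=\mu_P$, a meridian of the removed point $P\in S$, respectively. The top $F_*$ sends the $\ZZ$ generator $\gamma$ isomorphically onto the $\ZZ$ generator $\mu_P$ of the extra factor on the right (this uses $P\notin B_F$, so $\gamma$ is a genuine fiber meridian with infinite order and $F_*(\gamma)$ generates a free $\ZZ$ factor). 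Therefore $F_*$ on the top row is an isomorphism if and only if it restricts to an isomorphism of the complementary free factors, i.e. if and only if the bottom $F_*$ is an isomorphism — this is the desired equivalence.

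The main obstacle I anticipate is making the ``add exactly one free generator, no new relations'' claim rigorous on the \emph{space} side when $F^{-1}(P)$ is merely a typical, but not necessarily smooth or irreducible, fiber — one must be careful that $F$ is a locally trivial fibration over a punctured disk around $P$ (true because $P\notin B_F$ by definition of the atypical set) and that the meridian class $\gamma$ injects as a free factor rather than acquiring relations from $\pi_1$ of the disk fiber $T$; the van Kampen computation gives $\pi_1(U_B)=\pi_1(U_{B\cup\{P\}})\ast_{\pi_1(F^{-1}(\Delta^*))}\pi_1(F^{-1}(\Delta))$ and one checks $\pi_1(F^{-1}(\Delta))\cong\pi_1(T)$ with $\pi_1(F^{-1}(\Delta^*))\twoheadrightarrow\pi_1(T)$ having kernel normally generated by $\gamma$, yielding the amalgamated product collapses to the quotient killing $\gamma$. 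A secondary technical point is checking the compatibility of the \emph{maximal} orbifold structures: one must verify that $S_{(n+1,\bar m)}$ maximal with respect to $F_|:U_{B\cup\{P\}}\to S\setminus(B\cup\{P\})$ forces $S_{(n,\bar m)}$ to be maximal with respect to $F_|:U_B\to S\setminus B$ and conversely — but this follows since the multiplicities of the fibers over $S\setminus(B\cup\{P\})$ and over $S\setminus B$ agree away from $P$, and $P$ itself contributes a typical (multiplicity-one) fiber, so no marking is gained or lost. Once these two points are settled, the diagram chase above finishes the proof, and the ``Moreover'' statement is just the identification of the kernel of the top vertical map.
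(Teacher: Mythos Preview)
Your argument has a genuine gap: you claim that $\pi_1(U_{B\cup\{P\}})\cong\pi_1(U_B)*\ZZ$ \emph{unconditionally}, before assuming either of the two equivalent conditions, and then use this to run the diagram chase. That isomorphism is false in general. Your van Kampen decomposition only yields $\pi_1(U_B)\cong\pi_1(U_{B\cup\{P\}})/\ncl(\gamma)$; it does not show that $\gamma$ sits inside $\pi_1(U_{B\cup\{P\}})$ as a free $\ZZ$ factor. For a concrete counterexample, take $U=E\times\PP^1$ with $E$ an elliptic curve and $F$ the second projection: then $B_F=\emptyset$, and for $B=\{0\}$, $P=\infty$ one gets $\pi_1(U_B)\cong\ZZ^2$ and $\pi_1(U_{B\cup\{P\}})\cong\ZZ^3$, which is certainly not $\ZZ^2*\ZZ$. (In this example neither $F_*$ is an isomorphism, so the theorem's equivalence holds vacuously, but your intermediate claim fails.) The Deletion Lemma you invoke does give the free product decomposition, but only \emph{under the hypothesis} that the top $F_*$ is an isomorphism---so it handles one implication, not both.

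The paper's proof of the harder direction (bottom iso $\Rightarrow$ top iso) proceeds differently. Using the presentation machinery of Corollary~\ref{cor:presentationnleq1}, which crucially relies on the hypothesis that $F_*:\pi_1(U_B)\to\pi_1^{\orb}(S_{(n,\bar m)})$ is an isomorphism, one obtains a presentation of $\pi_1(U_{B\cup\{P\}})$ in which the meridian $\gamma_1$ of the new fiber appears \emph{only} in commutator relations $[\gamma_1,w_i]=1$. This yields an \emph{epimorphism} $\widetilde\varphi:\ZZ*\pi_1(U_B)\twoheadrightarrow\pi_1(U_{B\cup\{P\}})$ (not a priori an isomorphism). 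Composing with $F_*$ gives a surjective endomorphism of $\ZZ*\pi_1^{\orb}(S_{(n,\bar m)})\cong\pi_1^{\orb}(S_{(n+1,\bar m)})$, which is Hopfian by Lemma~\ref{lemma:EpiImpliesIso}; hence $F_*\circ\widetilde\varphi$ is an isomorphism, forcing both $\widetilde\varphi$ and the top $F_*$ to be isomorphisms. The Hopfian step is what replaces your attempted direct splitting, and it cannot be bypassed: without the hypothesis on the bottom $F_*$, there is no reason for $\gamma$ to generate a free factor.
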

As a consequence of this, we prove that the fundamental group of the complement of $r$ generic fibers
of a primitive polynomial map from $\CC^2$ onto $\CC$ is free of order~$r$ in Corollary~\ref{cor:polynomialmap}.

We finally devote Section~\ref{sec:applications} to a number of applications of these
results to the calculation of fundamental groups of complements of projective curves,
noting how the results in Section~\ref{sec:additiondeletion}
provide sufficient geometric conditions for these groups to be a free product of cyclic groups.
In particular, in Section~\ref{sec:Cpq} we prove the already stated Theorem~\ref{thm:Okapq}.
The key ingredients in the proof of this theorem are the Addition-Deletion 
results, which allow us to avoid any Zariski-Van Kampen/braid monodromy calculations.

Theorem~\ref{thm:Okapq} brings together several examples known in the literature. For instance, we apply it 
in Section~\ref{sec:conics} to provide a Zariski-Van Kampen-free proof of a result on the fundamental group of a 
union of lines and conics due to Amram-Teicher~\cite[Thm. 2.2, 2.5]{Amram-ErratumFundamentalQuadric}.
As a last application, in~\ref{sec:TorusType} we generalize a classical result due to Oka-Pho in~\cite{Oka-Pho-fundamental} 
on fundamental groups of maximal tame torus-type sextics.

Let us remark that, after the initial preparation of this paper, more progress has been made in this topic,
more concretely, about potential refinements of the sufficient conditions from Theorem~\ref{thm:Okapq} for the
fundamental group of the complement in $\PP^2$ of a plane curve to be a free product of cyclic groups.
More concretely, the examples of \cite{ji-Eva-Zariski} show that those sufficient conditions would need to
involve more than local invariants.

\begin{rem}
The results in this paper could be generalized to the quasi-K\"ahler (complements of normal crossing divisors in a 
compact K\"ahler surface) case in the following sense. Section~\ref{sec:preliminaries} can be extended to 
quasi-K\"ahler surfaces. These results would provide proofs of analogues to Theorems~\ref{thm:main:intro} 
and~\ref{lemma:generic-fiber-orbi} in this context. However, our main interest in this paper is the pair $(X,D)$,
where $X$ is a projective surface and $D$ is a curve on $X$ (not necessarily with normal crossings). 
The statements of Theorems~\ref{thm:main:intro} and~\ref{thm:Okapq} and Corollary~\ref{cor:pq} describe 
some properties of the curve $D$ in~$X$.
\end{rem}

\subsection*{Acknowledgments}
The authors would like to thank Enrique Artal, Mois\'es Herrad\'on Cueto, Anatoly Libgober,
and Jakub Witaszek for useful conversations. We also thank an anonymous referee for
useful comments that have helped improve the presentation.

\section{Preliminaries}\label{sec:preliminaries}
A short exposition on the main concepts and tools of this paper will be given in this section in order to fix 
notation and for the sake of completeness.

\subsection{Admissible maps on $X$}\label{sec:admissible}
Following Arapura~\cite{Arapura-geometry}, we call a surjective morphism $F:U\to \hS$ from a smooth 
quasi-projective surface $U$ to a smooth quasi-projective curve $\hS$ \textit{admissible} if it admits a 
surjective holomorphic enlargement $\hat F:\hat U\to S$ with connected fibers, 
where $\hat U$ and $S$ are smooth compactifications of $U$ and $\hS$ respectively. 
As a consequence, note that the admissible map $F$ has connected generic fibers,
and in fact, both conditions are equivalent (cf. \cite[Remark 2.2]{ji-Eva-Zariski}).

If $U$ is a Zariski dense subset of a smooth projective surface $X$, then $F$ defines a rational map 
$F:X\dashrightarrow S$ which can be extended to an admissible map
$F:X\setminus \mathcal B\to S$ on a maximal open
set $X\setminus \mathcal B$, where $\mathcal{B}$ is the finite subset
of base points. Note that $F$ must be surjective
and that $\mathcal B=\emptyset$ unless $S=\PP^1$. For convenience, this rational map will
also be referred to as \textit{admissible}.

\begin{rem}
\label{rem:h1-torsion}
An admissible map $F:X\setminus\mathcal B\rightarrow S$ has connected generic
fibers and hence it induces an epimorphism
$F_*:\pi_1(X\setminus\mathcal B)=\pi_1(X)\to \pi_1(S)$.
In particular, $H_1(X;\CC)=0$ implies $S=\PP^1$.
\end{rem}

Given $P\in S'\subset S$, the fiber $F^{-1}(P)\subset U$ defines an algebraic curve $C_P$. 
Assume $B\subset S'$ is a finite set. We will denote $C_B=\cup_{P\in B}C_P$. Any such curve will be referred to as a 
\emph{fiber-type curve}. It is well known that the minimal set of values $B_F$ for which 
$F:U\setminus C_{B_F}\to \hS\setminus B_F$ is a locally trivial fibration is finite~\cite{Thom-ensembles}. 
The points in $B_F$ are called \emph{atypical} values of $F:U\to \hS$.
We will distinguish between $F^{*}(P)$ as the pulled-back divisor and $C_P$ as 
its reduced structure. Using this notation, one can describe the set of multiple fibers as
\begin{equation}
\label{eq:multiple}
M_F=\{P\in \hS\mid F^{*}(P)=mD, m>1, \textrm{ for some effective divisor } D\}\subset B_F.
\end{equation}
Note that in general, the effective divisor $D$ in~\eqref{eq:multiple} need not be reduced. If $P\in S$ the 
\emph{multiplicity} 
of $F^{*}(P)$ is defined as $m\geq 1$ if $F^{*}(P)=mD$ for some $D$ and whenever $F^{*}(P)=m'D'$, then~$m'\leq m$.

\begin{rem}
\label{rem:2fibers}
If $X$ is a simply connected surface and $F:X\dashrightarrow S$ is an admissible map, then $S=\PP^1$ by 
Remark~\ref{rem:h1-torsion} and an analogous argument to the one given in the proof 
of~\cite[Prop. 2.8]{ji-Libgober-mw} shows that the number of multiple fibers of $F$ cannot exceed two.
\end{rem}

From now on, we will use the following notation.

\begin{notation}\label{not:XB}
Let $F:U\to \hS$ be an admissible map from a smooth quasi-projective surface $U$ to a smooth quasi-projective curve $\hS$, 
and let $B\subset \hS$ be a finite set. We denote by $U_B:=U\setminus C_B$. Analogously, if $F:X\dashrightarrow S$ is an admissible rational map from a smooth projective surface $X$ to a smooth projective curve $S$ and $B\neq\emptyset$, one defines $X_B$ as 
$U_B$ for $U=X\setminus \mathcal B$. Note that $X_B=X\setminus\left(\cup_{P\in B}\overline{F^{-1}(P)}\right)$.
\end{notation}

\subsection{Fundamental groups of quasi-projective varieties.}\label{sec:fundamentalgroups}
Let $X$ be a smooth quasi-projective surface and let $D=\cup_{i\in I} D_i$ be a curve in $X$, where $D_i$ are its
irreducible components. 

When studying $\pi_1(X\setminus D,p)$ 
one has the following generating homotopy classes of loops: Take a regular point $p_i$ on $D_i$ and consider a disk $\DD_i\subset X$ transversal to $D_i$ at $p_i$
and such that $\DD_i\cap D=\{p_i\}$. Let $\hat p_i\in \partial \DD_i$ and consider $\hat \gamma_i$ a loop 
based at $\hat p_i$ around $\partial \DD_i$ travelled in the positive orientation. 
Define $\delta_i:[0,1]\to X\setminus D$ a path in $X\setminus D$ starting at the base point $\delta_i(0)=p$ and ending 
at $\delta_i(1)=\hat p_i$. Denote by $\bar\delta_i$ the reversed path defined as usual as
$\bar\delta_i(t):=\delta_i(1-t)$, $t\in [0,1]$ starting at $p_i$ and ending at $p$.
The following loop $\gamma_i:=\delta_i \star \hat\gamma_i \star \bar \delta_i$ is based at $p$ and 
defines a homotopy class called a \emph{meridian} around $D_i$. The following two results are well known.

\begin{lemma}
\label{lemma:meridians-conjugated}
Let $\gamma$ be a meridian around $D_i$. A homotopy class $\gamma'$ is a meridian around $D_i$ if and 
only if $\gamma'$ is in the conjugacy class of $\gamma$ in $\pi_1(X\setminus D,p)$.
\end{lemma}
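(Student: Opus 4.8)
The plan is to reduce the statement to the independence properties of the meridian construction: the homotopy class $\gamma_i$ built from the data $(p_i,\DD_i,\hat p_i,\delta_i)$ changes only by conjugation (or not at all) when each piece of data is varied.

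\emph{Connecting path.} If $\delta_i$ and $\delta_i'$ are two paths from $p$ to $\hat p_i$ associated with the same disk and the same boundary point, then $\delta_i'\star\hat\gamma_i\star\bar\delta_i' = (\delta_i'\star\bar\delta_i)\star(\delta_i\star\hat\gamma_i\star\bar\delta_i)\star(\delta_i\star\bar\delta_i')$, which equals $\alpha\gamma_i\alpha^{-1}$ for $\alpha=[\delta_i'\star\bar\delta_i]\in\pi_1(X\setminus D,p)$. Conversely, given any $\alpha\in\pi_1(X\setminus D,p)$ represented by a loop $a$ based at $p$, the concatenation $a\star\delta_i$ is again a path from $p$ to $\hat p_i$ and produces the meridian $\alpha\gamma_i\alpha^{-1}$. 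This already establishes the ``if'' direction, and shows that the conjugacy class of $\gamma_i$ depends only on $(p_i,\DD_i,\hat p_i)$; moving $\hat p_i$ along $\partial\DD_i$ and absorbing the traversed arc into the connecting path shows it depends only on $(p_i,\DD_i)$.

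\emph{The transversal disk at a fixed regular point.} Since $p_i$ is a smooth point of $D_i$ lying on no other component of $D$, choose local analytic coordinates near $p_i$ in which $D=\{z_1=0\}$. Any sufficiently small disk transversal to $D_i$ at $p_i$ and meeting $D$ only at $p_i$ can be isotoped, through disks with these same two properties, to the standard coordinate disk $\{z_2=\cdots=z_n=0,\ |z_1|\le\varepsilon\}$; during this isotopy the boundary loops $\hat\gamma_i$ sweep out a free homotopy in $X\setminus D$, so by the previous paragraph the resulting classes $\gamma_i$ agree up to conjugacy. Hence the conjugacy class depends only on the regular point $p_i$. \emph{Independence of $p_i$.} The locus $D_i^{\mathrm{reg}}:=D_i\setminus\bigl(\Sing(D_i)\cup\bigcup_{j\ne i}D_j\bigr)$ is a connected complex manifold, being a dense Zariski-open subset of the irreducible variety $D_i$. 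Given regular points $p_i,p_i'$, join them by a path $c$ in $D_i^{\mathrm{reg}}$; a tubular neighborhood of $c$ in $X$ that meets $D$ only along $c$ furnishes a continuous family of transversal disks interpolating between a disk at $p_i$ and one at $p_i'$, and the boundary loops again sweep out a homotopy in $X\setminus D$. Composing the three reductions shows that any two meridians around $D_i$ are conjugate, which is the ``only if'' direction.

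The main obstacle is the last step: one must set up the tubular neighborhood of $c\subset D_i^{\mathrm{reg}}$ carefully so that it is disjoint from the remaining components and singular locus of $D$ and so that the transversal disks and their boundary loops vary continuously over $c$, and one must invoke connectedness of $D_i^{\mathrm{reg}}$ --- precisely where the irreducibility of $D_i$ is used. The first two reductions are elementary once the local normal form of $D$ near a regular point of $D_i$ is available.
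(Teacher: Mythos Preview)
Your argument is correct and follows the standard route for this well-known fact: reduce to independence of the connecting path (giving both directions at once), then independence of the transversal disk via local coordinates, then independence of the regular point via a tubular neighborhood over a path in $D_i^{\mathrm{reg}}$, using irreducibility of $D_i$ for connectedness. The only place requiring real care is the last step, and you have identified exactly what is needed there.

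The paper itself does not give a proof: it simply cites \cite{Nori-Zariski} and \cite[Prop.~1.34]{ji-pau}. So there is no ``paper's approach'' to compare against beyond noting that your self-contained argument is precisely the kind of proof those references contain. If anything, your write-up is more explicit than what one typically finds stated, since you spell out the converse (that every conjugate is realized by changing the connecting path) rather than leaving it implicit.
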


\begin{proof}
See \cite{Nori-Zariski}, also~\cite[Prop. 1.34]{ji-pau} for a proof.
\end{proof}

\begin{lemma}
\label{lemma:meridians-generators}
Consider $X_i:=X\setminus (\cup_{j\in I\setminus\{i\}}D_j)$ and
$(j_i)_*:\pi_1(X\setminus D,p)\to \pi_1(X_i,p)$ induced by the
inclusion $X\setminus D\hookrightarrow X_i$. Then $(j_i)_*$ is surjective,
and $\ker (j_i)_*$ is the normal closure of
any meridian $\gamma_i$ in~$\pi_1(X\setminus D,p)$.
In particular, if $X$ is simply connected, then any set of the form $\{\gamma_i\}_{i\in I}$ normally generates~$\pi_1(X\setminus D,p)$, where $\gamma_i$ is a meridian around $D_i$ for all $i\in I$.
\end{lemma}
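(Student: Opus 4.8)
The plan is to reduce the statement to the classical Zariski–van Kampen type assertion that removing an irreducible component from a curve complement kills exactly the normal closure of its meridian. First I would recall that $X\setminus D$ is obtained from $X_i$ by deleting the smooth locus of $D_i$ together with its singular points lying on $D_i$ (and the points of $D_i\cap D_j$ for $j\neq i$), so that $X\setminus D = X_i\setminus (D_i\cap X_i)$, where $D_i\cap X_i$ is a (possibly open, possibly singular) curve inside the quasi-projective surface $X_i$. The surjectivity of $(j_i)_*$ then follows from the general fact that for a smooth quasi-projective variety $Y$ and a Zariski-closed subset $Z$ of codimension one, $\pi_1(Y\setminus Z)\to \pi_1(Y)$ is surjective: any loop in $Y$ can be homotoped off $Z$ by transversality since $Z$ has real codimension two.

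Next I would identify the kernel. The key input is that $\pi_1(Y\setminus Z)\to\pi_1(Y)$ has kernel normally generated by the meridians of the irreducible components of $Z$; when $Z$ is irreducible (here the Zariski closure of $D_i$ inside $X_i$, which is irreducible because $D_i$ is), this kernel is the normal closure of a single meridian $\gamma_i$. This is a standard consequence of van Kampen's theorem applied to a tubular neighborhood of the smooth part of $D_i$ in $X_i$ together with its complement, combined with the fact, already recorded in Lemma~\ref{lemma:meridians-conjugated}, that all meridians around $D_i$ are conjugate; one handles the singular points and the finitely many points $D_i\cap D_j$ by noting they form a set of complex codimension $\geq 2$ in $X_i$, hence do not affect $\pi_1$. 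I would cite \cite{Nori-Zariski} and \cite{ji-pau} for this, in parallel with the previous lemma, since the two statements have the same provenance. The final sentence, that $\{\gamma_i\}_{i\in I}$ normally generates $\pi_1(X\setminus D)$ when $X$ is simply connected, follows by iterating: applying the result successively to remove one component at a time, $\pi_1(X\setminus D)\to\pi_1(X)=1$ is obtained by quotienting by the normal closure of all the $\gamma_i$.

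The main obstacle I anticipate is being careful that $X$ is only assumed smooth \emph{quasi}-projective and $D$ need not have normal crossings — so $D_i$ can be singular and the components can meet in complicated ways. The way around this is precisely the observation that passing to $X_i$ removes all the \emph{other} components, after which the only remaining subtlety is the self-singularities of $D_i$; those are isolated points, so deleting them changes neither $\pi_1(X_i)$ nor $\pi_1(X_i\setminus \overline{D_i})$, and one may safely work with the smooth locus of $\overline{D_i}$ in a slightly shrunk ambient space. Thus the proof is genuinely a citation-plus-bookkeeping argument rather than a new computation, which is appropriate given that the lemma is labelled "well known."
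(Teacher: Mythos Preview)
Your proposal is correct and matches the paper's approach: the paper treats this as a well-known fact and simply cites \cite{Nori-Zariski} and \cite[Lemma~2.3]{Shimada-Zariski}, while you give the standard sketch behind those references. The only discrepancy is cosmetic: you suggest citing \cite{ji-pau} alongside \cite{Nori-Zariski}, whereas the paper pairs \cite{Nori-Zariski} with \cite{Shimada-Zariski} instead.
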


\begin{proof}
See~\cite{Nori-Zariski}. Also, as a consequence of~\cite[Lemma 2.3]{Shimada-Zariski}.
\end{proof}

\subsection{Homology of the complement}
Consider $X$ a smooth projective surface and let $D=D_0\cup\dots\cup D_r\subset X$ be the decomposition of 
a curve $D$ into its irreducible components.
Using excision and Lefschetz duality, the homology exact sequence of $(X,X\setminus D)$ becomes 
$$
H_2(X;\ZZ) \to H^2(D;\ZZ)\to H_1(X\setminus D;\ZZ)\to H_1(X;\ZZ)\to 0.
$$
where $H^2(D;\ZZ)\cong\ZZ^{r+1}$ is generated by the cohomology classes of each $D_i$
irreducible component of~$D$. Hence, if $H_1(X;\ZZ)=0$ (resp. if $H_1(X;\QQ)=0$), one obtains
\begin{equation}
\label{eq:j}
H_2(X;\ZZ)\ \rightmap{j}\ \ZZ^{r+1} \to H_1(X\setminus D;\ZZ)\to 0
\end{equation}
(resp. with $\QQ$-coefficients), where $j(C)=\sum_{i=0}^r (C,D_i)_{X}D_i$ (see for instance~\cite{Bredon-topology}). In particular,
\begin{equation}
\label{eq:H1}
H_1(X\setminus D;\ZZ)=\ZZ^{r+1}/\im j \text{ \ \ \ (resp. }H_1(X\setminus D;\QQ)=\QQ^{r+1}/\im j\text{)}.
\end{equation}

The following condition on the irreducible components of a curve allows for a
particularly simple description of the first homology of~$X\setminus D$.
In this paper, whenever we refer to a divisor as a curve, the divisor is meant
to have a reduced structure.

\begin{proper}
\label{eq:numequiv}
The curve $D$ decomposes into irreducible components as $D=\cup_{i=0}^r D_i$,
and the irreducible components are such that,
$$m_iD_i\equiv m_jD_j \quad \textrm{ for some} \quad m_0,\dots,m_r\in \ZZ_{>0},$$
where $\equiv$ here means \emph{numerical equivalence}.
\end{proper}

\begin{exam}
\label{exam:condition}
The following are typical sources of examples satisfying Condition~\ref{eq:numequiv}:
(i) For any $D$, if the surface $X$ is such that $\NS(X)=\Pic(X)/\Pic^0(X)=\ZZ$.
(ii) For any $X$, if there exists an admissible map $F$ from $X$ onto a curve and $m_i\geq 1$ such that
$D=\cup_{i=0}^r D_i$ and $m_iD_i$ is a (multiple if $m_i>1$) fiber of~$F$.
(iii) For any $X$, whenever $D$ is irreducible, since $\im(j)\subset \ZZ D$.
\end{exam}

The following result is immediate from~\eqref{eq:H1}.

\begin{lemma}
\label{lem:cyclichomology}
If $X$ is a smooth projective surface such that $H_1(X;\ZZ)$ is finite, and $D$ is a curve
satisfying Condition~\ref{eq:numequiv}, then $H_1(X\setminus D;\QQ)\cong \QQ^r$.
Moreover, if $H_1(X;\ZZ)=0$, then $H_1(X\setminus D;\ZZ)\cong \ZZ^r\times \ZZ_d$,
where $d\in\ZZ_{>0}$ is determined by the components~$D_i$.
\end{lemma}

\begin{rem}\label{rem:coprime}
In particular, if $D\subset X=\PP^2$ is a curve with $r+1$ irreducible components, then
$H_1(X\setminus D;\ZZ)\cong \ZZ^r\times \ZZ_d$,
where $d$ is the greatest common divisor of the degrees of the irreducible  components of
$D$ (see~\cite[\S4 Prop. (1.3)]{Dimca-singularities}).
\end{rem}

\subsection{Orbifold fundamental groups and orbifold admissible maps}
\label{sec:orbifold}
In this section we will define the concept of orbifold fundamental groups and
orbifold morphisms induced by admissible maps.
As a word of caution, the word orbifold might be misleading, since we do not need
to develop any theory of orbifolds
or $V$-manifolds in this context. This will become clear throughout the section.
The first concept is a group theoretical
object associated with a smooth projective curve (or in more generality with a
projective manifold) and a divisor on it.
The second concept is purely geometric and only reflects the existence of non-reduced
fibers of an admissible map.

Consider a smooth projective curve $S$ of genus $g$ and choose a labeling map 
$\varphi:S\to\ZZ_{\geq 0}$ such that $\varphi(P)\neq 1$ only for a finite number of points, say 
$\Sigma=\Sigma_0\cup\Sigma_+\subset S$ for which $\varphi(P)= 0$ if $P\in \Sigma_0$ and 
$\varphi(Q)=m_{Q}>1$ if $Q\in \Sigma_+$. In this context, we will refer to this as an \emph{orbifold structure on} $S$.
This structure will be denoted by $S_{(n+1,\bar m)}$, where $n+1=\#\Sigma_0$, and $\bar{m}$ is a $(\#\Sigma_+)$-tuple 
whose entries are the corresponding $m_Q$'s. The \emph{orbifold fundamental group} associated with $S_{(n+1,\bar m)}$,
denoted by $\pi_1^{\orb}(S_{(n+1,\bar m)})$, is the quotient of
$
\pi_1(S\setminus \Sigma)$ by the normal closure of the subgroup $
\langle \mu_P^{\varphi(P)}, P\in \Sigma\rangle,
$
where $\mu_P$ is a meridian in $S\setminus \Sigma$ around $P\in \Sigma$.
Note that $\pi_1^{\orb}(S_{(n+1,\bar m)})$ is hence generated by
$
\{a_i,b_i\}_{i=1,\dots,g} \cup \{\mu_P\}_{P\in \Sigma}
$
and presented by the relations
\begin{equation}
\label{eq:rels}
\mu_P^{m_P}=1, \quad \textrm{ for } P\in \Sigma_+, \quad \textrm{ and } \quad
\prod_{P\in \Sigma}\mu_P=\prod_{i=1,\dots,g}[a_i,b_i]
\end{equation}
for appropriately chosen $
\{a_i,b_i\}_{i=1,\dots,g}$ and meridians $\{\mu_P\}_{P\in \Sigma}
$. In the particular case where $\Sigma_0\neq \emptyset$, \eqref{eq:rels} shows that
$\pi_1^{\textrm{orb}}(S_{(n+1,\bar m)})$ is a free product of cyclic groups as follows
\begin{equation*}
\pi_1^{\textrm{orb}}(S_{(n+1,\bar m)})\cong \pi_1(S\setminus\Sigma_0)\Conv\left(
\Conv_{P\in\Sigma_+}\left(\frac{\ZZ}{m_P\ZZ}\right)\right)\cong
\FF_r*\ZZ_{m_1}*\dots*\ZZ_{m_s},
\end{equation*}
where $r=2g-1+\# \Sigma_0=2g+n$,

\begin{dfn}
Let $S$ be a smooth projective curve endowed with an orbifold structure.
We refer to the orbifold fundamental group of $S$ as an \emph{open orbifold group}
of $S$ when the orbifold structure on $S$ is such
that~$\Sigma_0\neq\emptyset$, or equivalently, $n\geq 0$.
\end{dfn}

\begin{dfn}
The \emph{orbifold Euler characteristic} of $S_{(n+1,\bar m)}$ is given as
$$
\chi^{\orb}(S_{(n+1,\bar m)}):=2-2g-(n+1)-\sum_i\left( 1-\frac{1}{m_i}\right)
=1-(s+2g+n)+\sum_i\frac{1}{m_i}.
$$
\end{dfn}

\begin{dfn}
Let $X$ be a smooth algebraic variety. A dominant algebraic morphism $F:X\to S_{(n+1,\bar m)}$ is called an
\emph{orbifold morphism} if for all $P\in S$ such that $\varphi(P)>0$, the divisor $F^*(P)$ is a
$\varphi(P)$-multiple. The orbifold $S_{(n+1,\bar m)}$ is said to be \emph{maximal} (with respect to $F$)
if $F(X)=S\setminus \Sigma_0$ and for all $P\in F(X)$ the divisor $F^*(P)$ is not an $n$-multiple for any $n > \varphi(P)$.
\end{dfn}

The following result is well known (see for instance~\cite[Prop. 1.4]{ACM-multiple-fibers})
\begin{rem}\label{rem:inducedorb}
Let $F:X\to S_{(n+1,\bar m)}$ be an orbifold morphism. Then, $F$ induces a morphism
$
F_*:\pi_1(X)\to\pi_1^{\orb}(S_{(n+1,\bar m)}).
$
Moreover, if the generic fiber of $F$ is connected, then $F_*$ is surjective.
\end{rem}

The following lemma extends~\cite{Nielsen-commutator}. 

\begin{lemma}
\label{lemma:kernel}
Consider $G=\ZZ_{m_1}*\dots *\ZZ_{m_s}$, with $s\geq 2$, $m_i>1$, and $m:=\lcm(m_i,i\in I)$, for $I=\{1,\dots,s\}$. 
Let $\cm:G\to \ZZ_m$ be the natural epimorphism of $G$ onto its maximal cyclic quotient. 
Then $\ker(\cm)\cong \FF_\rho$, a free group of rank 
$$\rho=1-m+m\sum_{i\in I}\left(1-\frac{1}{m_i}\right)=1-m\chi^{\orb}(\PP^1_{(1,\bar m)}).$$ 
\end{lemma}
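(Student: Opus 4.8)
The plan is to realize $G$ as a group acting on a tree, and to extract both the freeness of $\ker(\cm)$ and its rank from the quotient graph via an Euler characteristic count.

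First I would record the one nontrivial property of $\cm$: it restricts to an \emph{injection} on each free factor $\ZZ_{m_i}$. Indeed, the natural epimorphism onto the maximal cyclic quotient sends a generator $g_i$ of $\ZZ_{m_i}$ to an element of order exactly $m_i$ in $\ZZ_m$ (for instance to the class of $m/m_i$; these classes generate $\ZZ_m$ since $\gcd_{i\in I}(m/m_i)=m/\lcm_{i\in I}(m_i)=1$), and any two such epimorphisms differ by an automorphism of $\ZZ_m$, hence have the same kernel. Because $\ZZ_m$ is abelian and $\cm(gxg^{-1})=\cm(x)$, it follows that $\cm$ is injective on every conjugate $g\ZZ_{m_i}g^{-1}$ as well. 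Consequently $H:=\ker(\cm)$ meets every conjugate of every factor trivially, so by the Kurosh subgroup theorem $H$ is free; but I will in fact reprove this together with the rank computation below.

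Next I would view $G=\ZZ_{m_1}*\dots*\ZZ_{m_s}$ as the fundamental group of the graph of groups over a star: a central vertex with trivial group, $s$ outer vertices carrying the groups $\ZZ_{m_i}$, and $s$ edges with trivial groups. Let $T$ be the associated Bass--Serre tree; $G$ acts on $T$ without inversions, with trivial edge stabilizers and vertex stabilizers that are either trivial (the central orbit $G/1$) or conjugates of some $\ZZ_{m_i}$ (the orbit $G/\ZZ_{m_i}$). By the previous paragraph $H$ meets every such stabilizer trivially, so $H$ acts freely on $T$; since $T$ is a tree, $H$ is free and $T\to H\backslash T=:\Gamma$ is the universal cover of a connected graph with $\pi_1(\Gamma)\cong H$. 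As $[G:H]=m<\infty$, the graph $\Gamma$ is finite, so $\rk H=1-\chi(\Gamma)$.

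It then remains to count cells of $\Gamma$, which is the only delicate step. The central vertices form the single orbit $G/1$, contributing $|H\backslash G|=[G:H]=m$ vertices; the $s$ families of edges are each an orbit $G/1$, contributing $sm$ edges in total. The $i$-th family of outer vertices is the orbit $G/\ZZ_{m_i}$, hence contributes $|H\backslash G/\ZZ_{m_i}|$ vertices; identifying $H\backslash G$ with $\ZZ_m$ through $\cm$, the right $\ZZ_{m_i}$-action becomes translation by the subgroup $\cm(\ZZ_{m_i})$, which has order $m_i$ by injectivity, so this count is $m/m_i$. Therefore
$$\chi(\Gamma)=\Big(m+\sum_{i\in I}\frac{m}{m_i}\Big)-sm = m - m\sum_{i\in I}\Big(1-\frac{1}{m_i}\Big),$$
and $\rk H = 1-\chi(\Gamma)=1-m+m\sum_{i\in I}(1-1/m_i)$, which equals $1-m\,\chi^{\orb}(\PP^1_{(1,\bar m)})$ directly from the definition of the orbifold Euler characteristic (with $g=0$ and $n=0$). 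The main obstacle is thus purely the orbit bookkeeping — in particular the identification $H\backslash G\cong\ZZ_m$ and the order of $\cm(\ZZ_{m_i})$, which is exactly where injectivity of $\cm$ on the factors enters; the rest is the standard dictionary between free products and actions on trees together with the fact that a finite-index free subgroup has rank $1-\chi$ of the quotient graph.
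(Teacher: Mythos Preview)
Your argument via Bass--Serre theory is correct and gives a clean, induction-free proof. The paper takes a different route: it proceeds by Reidemeister--Schreier techniques together with an induction on the number of distinct prime factors of $m$. Your approach is more conceptual --- once the star graph of groups is set up, both freeness of the kernel and its rank fall out of a single Euler-characteristic count on the quotient graph --- whereas the paper's approach is more hands-on and avoids the Bass--Serre machinery entirely. Both arrive at the same formula, and your orbit bookkeeping (identifying $H\backslash G$ with $\ZZ_m$ and using injectivity of $\cm$ on each factor to get $|H\backslash G/\ZZ_{m_i}|=m/m_i$) is exactly right.

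One small caveat: your parenthetical claim that ``any two such epimorphisms differ by an automorphism of $\ZZ_m$, hence have the same kernel'' is not correct in general. For instance, in $G=\ZZ_4*\ZZ_4$ the maps $g_1\mapsto 1,\ g_2\mapsto 1$ and $g_1\mapsto 1,\ g_2\mapsto 3$ are both surjections onto $\ZZ_4$ that are injective on each factor, yet $g_1g_2^{-1}$ lies in the kernel of the first but not of the second, and no automorphism of $\ZZ_4$ carries one map to the other. This does not damage your proof: all you actually use is that the chosen $\cm$ (say $g_i\mapsto m/m_i$) is injective on each free factor, which you verify directly, and from this the Bass--Serre computation yields that $\ker(\cm)$ is free of the stated rank regardless of which such epimorphism is meant. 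The uniqueness remark can simply be dropped.
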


\begin{proof}
It is straightforward using Reidemeister-Schreier techniques and induction over
the number of distinct prime factors of $m$.
\end{proof}

The following well-known result is a generalization of~\cite[Lemma 4]{Oka-Pho-fundamental}.

\begin{lemma}
\label{lemma:EpiImpliesIso}
Let $G$ be a finitely generated free product of cyclic groups. Then, $G$ is a
Hopfian group, i.e. every endomorphism of $G$ which is an epimorphism is an isomorphism.
\end{lemma}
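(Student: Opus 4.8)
The plan is to reduce the statement to the well-known fact that finitely generated free groups and finite cyclic groups are Hopfian, and then to invoke a residual finiteness argument, since a finitely generated residually finite group is automatically Hopfian (Malcev's theorem). Concretely, write $G = \FF_r * \ZZ_{m_1} * \dots * \ZZ_{m_s}$ with all $m_i$ finite; I claim $G$ is residually finite, and hence Hopfian. The key input is a classical theorem of Gruenberg: a free product of residually finite groups is residually finite (in fact, a free product of a family of finite groups and finitely generated free groups is residually finite). Since $\FF_r$ is residually finite and each $\ZZ_{m_i}$ is finite, $G$ is residually finite; being also finitely generated, Malcev's theorem gives that every surjective endomorphism of $G$ is injective, which is exactly the Hopfian property.

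The steps in order would be: (1) Observe that $G$ being a finitely generated free product of cyclic groups means $G \cong \FF_r * \ZZ_{m_1} * \dots * \ZZ_{m_s}$ for some $r \geq 0$ and finite $m_i > 1$, where the infinite cyclic factors have been absorbed into $\FF_r$. (2) Recall that finite groups and finitely generated free groups are residually finite. (3) Apply Gruenberg's theorem to conclude that the free product $G$ is residually finite. (4) Note $G$ is finitely generated (it has $r + s$ generators). (5) Apply Malcev's theorem: a finitely generated residually finite group is Hopfian, i.e., every epimorphism $G \to G$ is an isomorphism. Alternatively, for a more self-contained argument one can avoid citing Malcev and reprove the implication directly: given a surjective endomorphism $\varphi\colon G \to G$ with nontrivial kernel, pick $1 \neq g \in \ker\varphi$; since $G$ is residually finite there is a finite quotient $q\colon G \to Q$ with $q(g) \neq 1$; then the finitely many homomorphisms $G \to Q$ are permuted by precomposition with $\varphi$ (because $\varphi$ is surjective, precomposition is injective on $\Hom(G,Q)$, hence bijective since the set is finite), so $q = q' \circ \varphi$ for some $q'$, forcing $q(g) = q'(\varphi(g)) = q'(1) = 1$, a contradiction.

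I expect the main obstacle to be purely expository rather than mathematical: deciding how much of the residual-finiteness machinery to cite versus reprove. Gruenberg's theorem on free products is the one genuinely nontrivial ingredient, but it is entirely standard and can simply be cited. A secondary subtlety worth a remark is that the hypothesis ``finitely generated'' is essential in both Malcev's theorem and in the conclusion, so the proof should make visible where finite generation is used (step 4, to ensure $\Hom(G,Q)$ is finite for each finite $Q$). The argument sketched above is arguably cleaner than the approach via Reidemeister–Schreier or normal forms in free products, and it generalizes the cited special case $G = \ZZ_p * \ZZ_q$ from \cite{Oka-Pho-fundamental} without any case analysis.
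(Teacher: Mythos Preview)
Your argument is correct, and it follows a somewhat different path from the paper's proof. The paper splits $G = J * H$ with $J = \FF_r$ and $H = \ZZ_{m_1} * \dots * \ZZ_{m_s}$, observes that $J$ is Hopfian (classical for finitely generated free groups), shows $H$ is Hopfian via the chain \emph{virtually free $\Rightarrow$ residually finite $\Rightarrow$ Hopfian}, and then invokes the theorem of Dey--Neumann that a free product of two finitely generated Hopfian groups is Hopfian. Your approach instead treats all the factors uniformly: each cyclic factor is residually finite, so by Gruenberg's theorem the whole free product $G$ is residually finite, and Malcev finishes. Your route avoids the Dey--Neumann citation at the cost of citing Gruenberg; it is arguably more streamlined since it does not single out the free part, and the Malcev step is applied once rather than being combined with a separate Hopfianity statement for $\FF_r$. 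Both arguments are standard and of comparable depth; the paper's version has the minor expository advantage that the Hopfian property of $\FF_r$ is textbook material not requiring the residual-finiteness detour, while yours has the advantage of a single uniform argument.
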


\begin{proof}
Consider $G=\FF_r*\ZZ_{m_1}*\dots*\ZZ_{m_s}$, where $m_i\geq 2$ for any $i\in I=\{1,\dots,s\}$.
Let $J=\FF_r$ and $H=\ZZ_{m_1}*\dots*\ZZ_{m_s}$. Finitely generated free groups are Hopfian,
so $J$ is Hopfian. $H$ is a free product of finitely many finite groups, so it is virtually
free and thus residually finite. Finitely generated residually finite groups are Hopfian,
so $H$ is Hopfian. By \cite{DeyNeumann}, the free product of two finitely generated Hopfian
groups is Hopfian.
\end{proof}

\subsection{Fundamental groups of complements of fiber-type curves}
\label{sec:pi1-fiber-type}
In this section, $F:U\to S$ is going to be an admissible map from a smooth quasi-projective
surface to a smooth projective curve $S$ of genus $g_S$. Following~\cite{Kashiwara-fonctions}
we say the generic fiber $F^{-1}(P)$ of an admissible map is of \emph{type $(g_F,s_F)$}
if $F^{-1}(P)$ is homeomorphic to a smooth projective curve of genus $g_F$ with $s_F$ points
removed. We will denote by
\begin{equation}
\label{eq:pi-riemann}
\mathop{\Omega}\nolimits_{(g,s)}=\langle a_1,\dots,a_{g},b_1,\dots,b_{g},x_0,\dots,x_{s-1}:
\Pi_{i=1}^{g}[a_i,b_i]=\Pi_{j=0}^{s-1}x_j\rangle
\end{equation}
the fundamental group of a smooth projective curve of genus $g$, 
where $x_0, \dots, x_{s-1}$ are meridians around its $s$ punctures.

As above, consider the admissible map $F:U\to S$, $B=\{P_0,P_1,\dots,P_n\}\subset S$, $n\geq 0$. Let
$\SB=\{\gamma_1,\dots,\gamma_r\}$, $r=2g_S+n$ be a set of loops in $\pi_1(U_B)$ such that:
\begin{enumerate}
 \item\label{dfn:adapted-1} 
 $\{F_*(\gamma_k)\}_{k=1}^{r}$ generates $\pi_1(S\setminus B)\cong \FF_{r}$ for all $k=1,\ldots, r$,
 \item\label{dfn:adapted-2}
 the loops $\gamma_k\in\pi_1(U_B)$ result from lifting a meridian around $P_k\in S$, for $k=1,\ldots,n$,
 \item\label{dfn:adapted-3}
 the product $\tilde\gamma=\prod_{j=1}^{g_S}[\gamma_{n+2j-1},\gamma_{n+2j}]\left( \prod_{i=1}^n\gamma_i \right)^{-1}$ is 
 such that $F_*(\tilde\gamma)$ is a meridian around~$P_0$.
 \item\label{dfn:adapted-4} For every $P_k\in B$ such that $F^*(P_k)$ is not a multiple fiber, $\gamma_k$ is a product of 
meridians (positively or negatively oriented) about irreducible components of $C_{P_k}\subset C_B$ for all $k=1,\ldots,n$. 
In the particular case where $C_{P_k}$ is irreducible, $\gamma_k$ is a positively oriented meridian about $C_{P_k}$.
\end{enumerate}
On the other hand, if $P\in S\setminus (B_F\cup B)$, then the typical fiber $F^{-1}(P)$ is a smooth
curve of type $(g_F,s_F)$ and $\pi_1(F^{-1}(P))\cong \Omega_{(g_F,s_F)}$ as 
in~\eqref{eq:pi-riemann}. Let $\SF$ denote the image of such a set of generators as in~\eqref{eq:pi-riemann} by the 
homomorphism induced by the inclusion $\iota:F^{-1}(P)\hookrightarrow U_B$.

\begin{dfn}\label{dfn:adapted}
Any set of loops $\SF$ (resp. $\SB$) obtained as in the construction above will be referred to as an
\emph{adapted geometric set of fiber (resp. base) loops} w.r.t. $F$ and $B$.
\end{dfn}

The following shows that adapted geometric sets of fiber (resp. base) loops exist for admissible maps.

\begin{lemma}\label{lemma:admissible}
Let $F:U\to S$ be an admissible map from a smooth quasi-projective surface $U$ to a smooth projective curve $S$ of genus $g_S$.
Consider $B=\{P_0,P_1,\dots,P_n\}\subset S$, $n\geq 0$. Then, there exists $\SB=\{\gamma_1,\dots,\gamma_r\}$, $r=2g_S+n$ (resp. $\SF$) 
an adapted geometric set of base (resp. fiber) loops w.r.t. $F$ and $B$.
\end{lemma}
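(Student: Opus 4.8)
The statement asserts the existence of adapted geometric sets of fiber and base loops for an arbitrary admissible map $F\colon U\to S$ with respect to a finite set $B=\{P_0,\dots,P_n\}$. I would handle the fiber loops and base loops separately, as their constructions are essentially independent.

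For the fiber loops, the construction is almost immediate from the definition of admissibility. Pick a point $P\in S\setminus(B_F\cup B)$ outside all atypical values; by definition of $B_F$, the restriction $F\colon U\setminus C_{B_F}\to S\setminus B_F$ is a locally trivial fibration, so $F^{-1}(P)$ is a typical fiber, a smooth curve of type $(g_F,s_F)$ with $\pi_1(F^{-1}(P))\cong\Omega_{(g_F,s_F)}$. Choose an explicit set of generators $a_1,\dots,a_{g_F},b_1,\dots,b_{g_F},x_0,\dots,x_{s_F-1}$ as in~\eqref{eq:pi-riemann}, with the $x_j$ realized as small meridians around the $s_F$ punctures of the fiber. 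Then $\SF$ is defined to be the image of this set under $\iota_*\colon\pi_1(F^{-1}(P))\to\pi_1(U_B)$ induced by the inclusion $\iota\colon F^{-1}(P)\hookrightarrow U_B$ (note $P\notin B$ so the fiber indeed sits inside $U_B$). There is nothing to prove beyond observing this is well-defined, so this part is routine.

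For the base loops, the idea is to transport a standard geometric basis of $\pi_1(S\setminus B)$ up to $U_B$ via a local section of $F$ along a carefully chosen bouquet. Fix a basepoint $Q\in S\setminus(B_F\cup B)$ near $P_0$ and a basepoint $\tilde Q\in F^{-1}(Q)\subset U_B$. Since $S$ is a genus-$g_S$ curve with the $n+1$ punctures $P_0,\dots,P_n$, the group $\pi_1(S\setminus B,Q)$ is free of rank $r=2g_S+n$, and one can choose a standard geometric generating system: meridians $\mu_1,\dots,\mu_n$ around $P_1,\dots,P_n$, handle generators $\alpha_1,\beta_1,\dots,\alpha_{g_S},\beta_{g_S}$, all realized as simple loops along a wedge (bouquet) of arcs and circles issuing from $Q$ that avoids a neighborhood of each puncture, and such that $\prod_{j=1}^{g_S}[\alpha_j,\beta_j]\bigl(\prod_{i=1}^{n}\mu_i\bigr)^{-1}$ equals the meridian around $P_0$ (this is the standard surface-with-boundary relation, rewritten). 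Now I would shrink this bouquet so that it lies inside $S\setminus(B_F\cup B)$ together with small arcs running into punctured disks around each $P_i$ ($i\ge1$); over the locally trivial part $S\setminus B_F$ the fibration admits a section over any contractible piece, and over a punctured disk around $P_i$ one lifts the meridian $\mu_i$ to the fiber-type curve as in condition~\eqref{dfn:adapted-2} — concretely, the lift of $\mu_i$ to $U_B$ is obtained by going around $C_{P_i}$, and when $F^*(P_i)$ is not a multiple fiber this lift is a product of meridians around the irreducible components of $C_{P_i}$ (giving~\eqref{dfn:adapted-4}), while for a multiple fiber it is a product of meridians each appearing with the appropriate multiplicity. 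Define $\gamma_1,\dots,\gamma_n$ to be these lifts of $\mu_1,\dots,\mu_n$, and $\gamma_{n+1},\dots,\gamma_r$ to be the lifts of the handle generators $\alpha_1,\beta_1,\dots$ along the chosen section. By construction $F_*(\gamma_k)$ is the prescribed generator of $\pi_1(S\setminus B)$, giving~\eqref{dfn:adapted-1}; condition~\eqref{dfn:adapted-2} holds by definition of the $\gamma_i$ for $i\le n$; and~\eqref{dfn:adapted-3} holds because $F_*$ is a homomorphism, so $F_*(\tilde\gamma)=\prod[\mu_{n+2j-1},\mu_{n+2j}]\cdot(\prod\mu_i)^{-1}$ is exactly the meridian around $P_0$ by the surface relation.

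\textbf{Main obstacle.} The genuinely delicate point is condition~\eqref{dfn:adapted-4}: one must verify that the lift to $U_B$ of a small meridian $\mu_i$ around a non-multiple value $P_i$ is genuinely a \emph{product of meridians} around the irreducible components of $C_{P_i}$ (with signs), and in the irreducible case a single positively oriented meridian. This requires a local analysis near the fiber $F^{-1}(P_i)$: one considers the closure $\overline{F^{-1}(P_i)}$ inside a compactification, resolves the total space so that the fiber becomes a normal crossing divisor $\sum a_j E_j$, and tracks the boundary of a small disk transverse to the original pencil direction. Because $P_i$ is not a multiple fiber, $\gcd(a_j)=1$ over the components meeting $U$, and a standard argument (compare~\cite{ji-pau}, and the local picture around a fiber of a pencil) shows that the monodromy around $P_i$, expressed in $\pi_1(U_B)$, is a word in the meridians of the $E_j$ that survive in $U_B$, with exponents summing correctly; when $C_{P_i}$ is irreducible this word reduces to the single meridian with exponent $+1$. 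I expect this local-to-global bookkeeping — making precise exactly which exceptional components are discarded when passing to $U_B$ and checking the signs — to be the part requiring the most care, whereas the existence of the section over the contractible pieces of the base and the transport of the handle generators is standard covering-space/fibration theory.
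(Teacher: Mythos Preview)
Your approach is correct in spirit but takes a more laborious route than the paper, particularly for condition~\eqref{dfn:adapted-4}.

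For conditions~\eqref{dfn:adapted-1}--\eqref{dfn:adapted-3}, the paper does not build geometric sections over a bouquet. It simply observes that since $F$ has connected generic fibers, $F_*\colon\pi_1(U_B)\to\pi_1(S\setminus B)\cong\FF_r$ is surjective, and then picks arbitrary lifts $\gamma_k$ of a standard generating system of $\pi_1(S\setminus B)$. Your section-based lift is a concrete realization of this, but the abstract surjectivity argument is shorter and avoids any issue of patching sections over non-contractible pieces of the bouquet.

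The real divergence is in condition~\eqref{dfn:adapted-4}, which you correctly flag as the delicate point. The paper bypasses your proposed local resolution analysis entirely with an algebraic trick. For each irreducible component $C$ of $F^*(P_k)$ of multiplicity $m$, one can choose a meridian $\mu_C$ in $U_B$ with $F_*(\mu_C)=F_*(\gamma_k)^m$. Since $m_k=\gcd$ of these multiplicities, B\'ezout's identity produces a product $\mu_k$ of such meridians with $F_*(\mu_k)=F_*(\gamma_k)^{m_k}$. When $F^*(P_k)$ is not multiple, $m_k=1$, so $\mu_k=\alpha\gamma_k$ for some $\alpha\in\ker F_*$; replacing $\gamma_k$ by $\alpha\gamma_k$ yields~\eqref{dfn:adapted-4} without disturbing~\eqref{dfn:adapted-1}--\eqref{dfn:adapted-3}. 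This completely sidesteps the normal-crossing resolution and the exceptional-divisor bookkeeping you anticipate needing. Your geometric approach would ultimately work, but the B\'ezout correction is both shorter and more robust.
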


\begin{proof}
The statement with respect to $\SF$ follows by construction. As for $\SB$, let us choose a set of loops in 
$\pi_1(S\setminus B)=\mathop{\Omega}\nolimits_{(g_S,n+1)}$ satisfying~\eqref{eq:pi-riemann}.
Since $F$ has connected fibers and is algebraic, $F_*:\pi_1(U_B)\to \pi_1(S\setminus B)\cong\FF_r$ is surjective and one can 
choose liftings $\gamma_k$ satisfying properties~\eqref{dfn:adapted-1}--\eqref{dfn:adapted-3} above. 

To see that we may choose $\SB$ so that it also satisfies condition \eqref{dfn:adapted-4}, 
note that there exists a meridian $\mu_C$ around each irreducible component $C$ of $F^*(P_k)$ of multiplicity $m=\mult\{C\}$, 
such that $F_*(\mu_C)=F_*(\gamma_k)^m$. Also note that
$$m_k=\gcd\{\mult(C)\in\ZZ_{\geq 1}\mid C \textrm{ irreducible component in } F^*(P_k)\}.$$
Using B\'ezout's identity one can obtain a product of meridians $\mu_k$ whose image is $F_*(\gamma_k)^{m_k}$.
In particular, $\mu_k=\alpha\gamma^{m_k}_k$ for $\alpha\in \ker F_*$. 
If $F^*(P)$ is not multiple, then $m_k=1$. Replacing $\gamma_k$ by $\alpha\gamma_k$ condition \eqref{dfn:adapted-4} follows, 
and conditions~\eqref{dfn:adapted-1}--\eqref{dfn:adapted-3} still hold.
\end{proof}

\begin{lemma}\label{lemma:semidirect}
Let $F:U\to S$ be an admissible map from a smooth quasi-projective surface $U$ to a smooth projective curve $S$ of genus $g_S$. Consider $B=\{P_0,P_1,\dots,P_n\}\subset S$, $n\geq 0$. Suppose moreover that $B\supset B_F$ contains the set $B_F$ of atypical values of $F$, and let $P\in S\setminus B$. 
Then $\pi_1(U_B)$ is a semidirect product of the form 
$$\pi_1(U_B)\cong \pi_1\left(F^{-1}(P)\right)\rtimes \pi_1\left(S\setminus B\right).$$
Moreover, $\pi_1(U_B)$ has a presentation with generators $\SF\cup \SB$ for 
$$
\SF=\{a_i,b_i,x_j\mid i=1,\ldots,g_F,\ j=0,\ldots,s_F-1\}, \textrm{ and }
\quad\quad\SB=\{\gamma_k\mid k=1,\ldots,r\},
$$
where $(g_F,s_F)$ is the type of $F^{-1}(P)$, $\SF$ (resp. $\SB$) is an adapted geometric set of fiber (resp. base) loops w.r.t. $F$ and $B$,
and the following is a set of relations
\begin{equation}
\label{eq:relations}
\array{l}
{[\gamma_k,a_i]=\alpha_{i,k}},\\
{[\gamma_k,b_i]=\beta_{i,k}},\\
{[\gamma_k\delta_{j,k},x_j]=1},\\
\prod_jx_j = \prod_i [a_i,b_i],
\endarray
\end{equation}
where $i\in\{1,\ldots,g_F\}$, $j\in\{0,\ldots,s_F-1\}$, $k\in\{1,\ldots,r=2g_S+n\}$, and $\alpha_{i,k}, \beta_{i,k}$, 
and $\delta_{j,k}$ are words in the elements of~$\SF$.
\end{lemma}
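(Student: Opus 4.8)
The plan is to realize $\pi_1(U_B)$ as the fundamental group of a locally trivial fibration and then to carefully promote the abstract short exact sequence of groups to the explicit presentation in~\eqref{eq:relations}. First I would use the hypothesis $B\supseteq B_F$: removing the fibers over $B$ turns $F$ into a locally trivial fibration $F_|:U_B\to S\setminus B$ with fiber $F^{-1}(P)$ of type $(g_F,s_F)$. Since $S\setminus B$ is an open Riemann surface (as $B\neq\emptyset$), it is homotopy equivalent to a wedge of $r=2g_S+n$ circles, so $\pi_1(S\setminus B)\cong\FF_r$ is free, and the long exact homotopy sequence of the fibration reads
\[
1\to\pi_1\bigl(F^{-1}(P)\bigr)\to\pi_1(U_B)\to\pi_1(S\setminus B)\to 1,
\]
where injectivity on the left uses $\pi_2(S\setminus B)=0$. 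Because $\pi_1(S\setminus B)$ is free, this sequence splits, giving the semidirect product decomposition $\pi_1(U_B)\cong\pi_1(F^{-1}(P))\rtimes\pi_1(S\setminus B)$, with a splitting realized geometrically by the adapted base loops $\SB$ provided by Lemma~\ref{lemma:admissible}.

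Next I would extract the presentation. The normal subgroup $\pi_1(F^{-1}(P))$ is presented on the generators $\SF=\{a_i,b_i,x_j\}$ by the single surface relation $\prod_j x_j=\prod_i[a_i,b_i]$, which accounts for the last relation in~\eqref{eq:relations}. The quotient $\pi_1(S\setminus B)\cong\FF_r$ is free on the images of $\SB=\{\gamma_k\}$, so it contributes no relations. For a group expressed as $N\rtimes Q$ with $N=\langle\SF\mid R_N\rangle$ and $Q=\langle\SB\mid\varnothing\rangle$, a standard presentation of the extension has generating set $\SF\cup\SB$ and relations $R_N$ together with, for each generator $\gamma_k$ of $Q$ and each generator $g$ of $N$, the conjugation relation $\gamma_k g\gamma_k^{-1}=\theta_k(g)$, where $\theta_k$ is the action word of $\gamma_k$ on $N$ expressed in $\SF$. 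Writing these out: $\gamma_k a_i\gamma_k^{-1}=\theta_k(a_i)$ becomes $[\gamma_k,a_i]=\theta_k(a_i)a_i^{-1}=:\alpha_{i,k}$, and similarly for $b_i$, giving the first two families of relations. For the $x_j$ the situation is slightly different because the meridians around the punctures of the fiber need not be fixed by the monodromy but are carried to conjugates; tracking this carefully (the monodromy permutes/conjugates the puncture meridians) yields $\gamma_k\delta_{j,k}x_j\delta_{j,k}^{-1}\gamma_k^{-1}=x_j$, i.e. $[\gamma_k\delta_{j,k},x_j]=1$, for suitable words $\delta_{j,k}\in\SF$ — this is the third family.

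The main obstacle, and the step I would spend the most care on, is the third family of relations: the claim that the monodromy action of each base loop $\gamma_k$ on the puncture meridian $x_j$ has the special form $x_j\mapsto(\gamma_k\delta_{j,k})x_j(\gamma_k\delta_{j,k})^{-1}$ rather than an arbitrary automorphism. The point is that the $s_F$ boundary punctures of the fiber come from the intersection of $F^{-1}(P)$ with the ``horizontal'' part of the divisor we removed to form $U$ (together with any exceptional behaviour), and a boundary-preserving monodromy of a punctured surface sends each peripheral (puncture) class to a conjugate of itself — this is a standard fact about the mapping class group of a surface with boundary fixed setwise, and it is exactly here that condition~\eqref{dfn:adapted-4} in the construction of $\SB$ and the adapted structure of $\SF$ are used. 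I would invoke this peripheral-structure fact, identify $\delta_{j,k}$ as the word recording how the $j$-th puncture is dragged by the $k$-th base loop, and conclude. Finally, to see these relations suffice (not merely hold) I would appeal to the general presentation of a group extension $1\to N\to G\to Q\to 1$ with $Q$ free: such a $G$ is presented by a presentation of $N$ together with lifts of free generators of $Q$ and the conjugation relations above, with no further relations needed precisely because $Q$ is free (so $H^2(Q;-)=0$ and the extension is determined by the action). This gives the stated presentation and completes the proof.
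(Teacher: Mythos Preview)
Your argument is correct and follows essentially the same approach as the paper: locally trivial fibration from $B\supseteq B_F$, vanishing of $\pi_2(S\setminus B)$ via the wedge-of-circles homotopy type, splitting from freeness of $\pi_1(S\setminus B)$, and then the standard presentation of a semidirect product with free quotient. The paper's justification for the third family of relations is simply that $\gamma_k^{-1}x_j\gamma_k$ is again a meridian around the same puncture $p_j$ in the fiber, hence a conjugate of $x_j$ there; your peripheral-structure phrasing says the same thing. One small misattribution: condition~\eqref{dfn:adapted-4} in Definition~\ref{dfn:adapted} concerns expressing base loops as products of meridians around components of removed fibers and is not what drives the $x_j$ relation --- the relevant input is just that conjugation by $\gamma_k$ preserves the conjugacy class of each puncture meridian in the fiber group.
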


\begin{proof}
The condition $B\supset B_F$ implies that $F:U_B\rightarrow S\setminus B$ is a locally trivial fibration, 
with fiber $F^{-1}(P)$. Let $\iota:F^{-1}(P)\hookrightarrow U_B$ be the inclusion. The long exact sequence of 
a fibration for homotopy groups yields
	$$
	\pi_2(S\setminus B)\to \pi_1\left(F^{-1}(P)\right)\xrightarrow{\iota_*} \pi_1(U_B)\xrightarrow{F_*}  
	\pi_1\left(S\setminus B\right)\to 1.
	$$
	Note that, since $\pi_1\left(S\setminus B\right)\cong \FF_r$, the epimorphism $F_*$ splits.
	Since $S\setminus B$ is homotopy equivalent to a wedge of circles, $\pi_2(S\setminus B)=1$, 
	which concludes the result.

The description of the semidirect product $\pi_1\left(F^{-1}(P)\right)\rtimes \pi_1\left(S\setminus B\right)$ 
is given by considering the action of $\gamma_k$ on the group $\pi_1\left(F^{-1}(P)\right)$ generated by $\SF$.
For the generators $a_i$ (resp. $b_i$) one can write $\gamma_k^{-1}a_i\gamma_k=a_i\alpha_{i,k}$ 
($\gamma_k^{-1}b_i\gamma_k=b_i\alpha_{i,k}$) for some $\alpha_{j,k}$ (resp. $\beta_{j,k}$) in 
$\pi_1\left(F^{-1}(P)\right)$. For the meridians $x_j$ around the point $p_j$ on the boundary of $F^{-1}(P)$, 
note that $\gamma^{-1}_kx_j\gamma_k$ must be also a meridian around $p_j$ and hence 
$\gamma^{-1}_kx_j\gamma_k=\delta_{j,k}x_j\delta_{j,k}^{-1}$ for some $\delta_{j,k}$ in $\pi_1\left(F^{-1}(P)\right)$. 
The last relation in~\eqref{eq:relations} comes from the choice of the adapted geometric set of fiber loops~$\SF$.
\end{proof}

\begin{cor}
\label{cor:(0,1)}
Under the notation and assumptions of Lemma~\ref{lemma:semidirect}, $F_*:\pi_1(U_B)\to \FF_r$ is an isomorphism if and only if $F^{-1}(P)$ is of type~$(0,1)$ or $(0,0)$.

Moreover, if $F:\PP^2\dasharrow\PP^1$, and $U=\PP^2\setminus \mathcal{B}$, then $M_F=B_F$ and hence $\# B_F\leq 2$.
\end{cor}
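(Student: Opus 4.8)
The plan is to leverage the semidirect product structure from Lemma~\ref{lemma:semidirect} applied to the locally trivial fibration $F\colon U_B\to S\setminus B$, where $B$ contains $B_F$. First I would prove the main equivalence. If $F^{-1}(P)$ is of type $(0,0)$ or $(0,1)$, then $\pi_1(F^{-1}(P))$ is trivial (the $(0,0)$ case is $S^2$ minus nothing, i.e.\ $\PP^1$; the $(0,1)$ case is $\PP^1$ minus a point, i.e.\ $\mathbb{C}$), so the exact sequence $1\to\pi_1(F^{-1}(P))\to\pi_1(U_B)\xrightarrow{F_*}\pi_1(S\setminus B)\to 1$ immediately gives that $F_*$ is an isomorphism onto $\FF_r$. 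Conversely, suppose $F_*\colon\pi_1(U_B)\to\FF_r$ is an isomorphism. Since $F_*$ always splits (as $\FF_r$ is free) with kernel the normal closure of $\iota_*\pi_1(F^{-1}(P))$, an isomorphism forces $\iota_*\pi_1(F^{-1}(P))=1$. Now I would use the fact that for a fiber of a fibration, $\iota_*$ on $\pi_1$ being trivial is quite restrictive: the meridians $x_0,\dots,x_{s_F-1}$ around the boundary punctures map to meridians in $U_B$, and more importantly the element $\prod_i[a_i,b_i]=\prod_j x_j$ together with the genus generators must all die. The cleanest route is homological: $H_1(F^{-1}(P))\to H_1(U_B)$ must be zero, and by the structure of the fibration the fiber class contributes; one checks that $2g_F + \max(s_F-1,0)=0$, forcing $g_F=0$ and $s_F\in\{0,1\}$. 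I expect the subtlety here to be handling the $s_F=0$ case (compact fiber) versus $s_F\geq 1$ uniformly, so I would phrase it as: $\iota_*$ trivial implies $H_1(F^{-1}(P);\QQ)=0$, and a genus-$g_F$ curve with $s_F$ punctures has $H_1$ of rank $2g_F$ if $s_F\leq 1$ and $2g_F+s_F-1$ if $s_F\geq 1$; either way the rank vanishes only when $g_F=0$ and $s_F\leq 1$.

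For the ``Moreover'' part with $F\colon\PP^2\dashrightarrow\PP^1$ and $U=\PP^2\setminus\mathcal{B}$, I would argue as follows. We always have $M_F\subset B_F$ by~\eqref{eq:multiple}. For the reverse inclusion $B_F\subset M_F$, take $P\in S\setminus M_F$ and choose $B\supset B_F$ a finite set containing $P$; I want to show $F^{-1}(P)$ is actually a typical fiber, i.e.\ $P\notin B_F$. The key input is Remark~\ref{rem:2fibers}: since $\PP^2$ is simply connected, $\#M_F\leq 2$. Combined with simple connectivity, $\pi_1(\PP^2\setminus(\mathcal{B}\cup C_{B_F}))$ surjects onto $\pi_1(\PP^1\setminus B_F)=\FF_{\#B_F-1}$ via $F_*$, and by Lemma~\ref{lemma:meridians-generators} (since $\PP^2$ is simply connected) this group is normally generated by meridians of the components of the removed fibers. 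For a non-multiple fiber $F^*(P_k)$, property~\eqref{dfn:adapted-4} shows the base loop $\gamma_k$ is a product of such meridians; hence a non-multiple atypical fiber does not contribute a ``new'' free generator beyond what the multiple fibers give. Working this out carefully: if $P\in B_F\setminus M_F$, then $C_P$ is a non-multiple fiber, and one shows $\pi_1(\PP^2\setminus(\mathcal{B}\cup C_{B_F}))\to\pi_1(\PP^2\setminus(\mathcal{B}\cup C_{B_F\setminus\{P\}}))$ would have to be an isomorphism by the meridian description, contradicting that removing a genuinely atypical fiber changes the topology. The cleanest packaging uses Euler characteristic: by the fibration structure over $S\setminus B_F$, one gets $\chi(U\setminus C_{B_F}) = \chi(\PP^1\setminus B_F)\cdot\chi(F^{-1}(P))$, and comparing with the contributions of the individual fibers pins down that every atypical fiber must be multiple.

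The main obstacle I anticipate is the reverse inclusion $B_F\subseteq M_F$ in the ``Moreover'' part: atypicality is a priori weaker than being multiple (a fiber can be singular without being a multiple fiber, e.g.\ a nodal fiber in a pencil of cubics). The resolution must use that $X=\PP^2$ is simply connected in an essential way --- for a general surface this inclusion fails. I would make precise that simple connectivity of $\PP^2$ forces, via Lemma~\ref{lemma:meridians-generators} and the first part of this corollary applied after removing all fibers in $B_F$, that each non-multiple atypical fiber's base loop $\gamma_k$ lies in the subgroup generated by the fiber loops $\SF$ union the other $\gamma$'s, so it can be ``absorbed''; iterating, one reduces to the case $B=M_F$ and concludes $B_F=M_F$, hence $\#B_F=\#M_F\leq 2$ by Remark~\ref{rem:2fibers}. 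Once $\#B_F\leq 2$ is established the statement follows immediately.
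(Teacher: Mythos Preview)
Your argument for the first equivalence is essentially the paper's, though you overcomplicate the converse. Since Lemma~\ref{lemma:semidirect} already records that $\pi_2(S\setminus B)=0$, the map $\iota_*$ is \emph{injective}, so $F_*$ being an isomorphism forces $\pi_1(F^{-1}(P))=1$ outright; there is no need to pass through homology.

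The ``Moreover'' part has a genuine gap. You correctly note that $B_F\subseteq M_F$ fails in general (your own example, a pencil of cubics with a nodal member, lives on $\PP^2$), but you then identify simple connectivity of $\PP^2$ as the saving hypothesis. It is not: that pencil of cubics already shows simple connectivity alone cannot force $B_F=M_F$. The hypothesis you are missing is that the ``Moreover'' is still under the equivalent conditions of the first sentence, so the generic fiber is of type $(0,1)$ (type $(0,0)$ being excluded since a pencil on $\PP^2$ has base points). None of your sketched arguments---the meridian-absorption idea, the Euler characteristic comparison---uses this, and none of them goes through without it.

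The paper takes a completely different route for the ``Moreover'': it observes that the fibers are open (so type $(0,1)$), and then invokes Kashiwara's classification \cite[Thm.~6.1]{Kashiwara-fonctions} of all rational pencils on $\PP^2$ with generic fiber of type $(0,1)$. From that explicit list one reads off $M_F=B_F$ by inspection. So the paper's proof of the ``Moreover'' is a citation to a classification theorem, not an intrinsic argument of the kind you attempt; if you want a self-contained proof you would need to substantially use the $(0,1)$ hypothesis, which your proposal does not.
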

\begin{proof}
The first statement is an immediate consequence of Lemma~\ref{lemma:semidirect}. If $U$ is a Zariski open 
subset of $\PP^2$, any pencil has at least a base point and thus any fiber $F^{-1}(P)$ of 
$F:\PP^2\dashrightarrow\PP^1$ must be an open curve, so the fibers of $F_{|U}$ will be open curves as well. 
The \emph{moreover} part follows from~\cite[Thm. 6.1]{Kashiwara-fonctions} by direct inspection, since all 
the pencils of type $(0,1)$ are classified. 
\end{proof}	

\begin{exam}
In particular, according to Corollary~\ref{cor:(0,1)}, the classification of all rational pencils on $\PP^2$ 
of type $(0,1)$ given in~\cite{Kashiwara-fonctions} provides a list of examples of curves whose complement have a 
free fundamental group.
\end{exam}

In Lemma~\ref{lemma:semidirect}, $B_F\subset B$. However, one can understand $\pi_1(U_B)$
for any non-empty finite set $B\subset S$ as follows.

\begin{cor}\label{cor:presentation}
Assume $F:U\to S$ is an admissible map, and let $B\subset S$, with $\# B=n+1\geq 1$. 
Then, $\pi_1(U_B)\cong\pi_1(U_{B\cup B_F})/N$, where $N$ is the normal closure of meridians 
$\gamma\in\pi_1(U_{B\cup B_F})$ of the components of $C_{B_F\setminus B}$.
\end{cor}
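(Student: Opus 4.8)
The plan is to realize $U_B$ as the complement of a fiber-type curve inside $U_{B\cup B_F}$ and then apply Lemma~\ref{lemma:meridians-generators}. First I would observe that, by definition of $B_F$, adding the atypical values of $F$ that are missing from $B$ amounts to removing the corresponding fibers: precisely, writing $B_F\setminus B=\{Q_1,\dots,Q_t\}$, we have
\begin{equation*}
U_{B\cup B_F}=U_B\setminus C_{B_F\setminus B}=U_B\setminus\left(\bigcup_{\ell=1}^t C_{Q_\ell}\right),
\end{equation*}
so that $U_{B\cup B_F}$ is the complement in the smooth quasi-projective surface $U_B$ of the (possibly reducible, possibly empty) curve $C_{B_F\setminus B}$. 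Note that when $B_F\subseteq B$ there is nothing to prove, so we may assume $C_{B_F\setminus B}\neq\emptyset$.

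The core of the argument is an iterated application of Lemma~\ref{lemma:meridians-generators}. That lemma tells us that for a smooth quasi-projective variety and one of the irreducible components $D_i$ of a curve $D$, the inclusion of $X\setminus D$ into $X\setminus(D\setminus D_i)$ induces a surjection on $\pi_1$ whose kernel is the normal closure of a meridian around $D_i$. Applying this with ambient variety $U_B$ and letting $D$ run over the irreducible components of $C_{B_F\setminus B}$, removed one at a time, I get that the inclusion $U_{B\cup B_F}\hookrightarrow U_B$ induces a surjection $\pi_1(U_{B\cup B_F})\to\pi_1(U_B)$ whose kernel is the normal closure of the set of meridians in $\pi_1(U_{B\cup B_F})$ around all the irreducible components of $C_{B_F\setminus B}$. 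This is exactly the subgroup $N$ in the statement, giving $\pi_1(U_B)\cong\pi_1(U_{B\cup B_F})/N$. One should be slightly careful to phrase the iteration so that at each stage the ambient space is the quasi-projective surface obtained by having already put back some fibers; since $U_B$ is smooth quasi-projective and each intermediate space is Zariski-open in it, Lemma~\ref{lemma:meridians-generators} applies at every step, and the kernels accumulate to the normal closure of the union of all the relevant meridians (using that a normal closure of a union is generated by the normal closures of the pieces, together with transitivity of ``normal closure of kernel'' through a tower of surjections).

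The main obstacle, though it is minor, is bookkeeping of base points and of the fact that Lemma~\ref{lemma:meridians-generators} is stated for a curve inside a variety rather than for successive deletions; I would address this by noting that a meridian around a component $C$ of $C_{Q_\ell}$ computed in $\pi_1(U_{B\cup B_F})$ maps, under the successive inclusion-induced surjections, to a meridian around $C$ in any intermediate space where $C$ has not yet been filled in, so the descriptions of the kernels are compatible and their total contribution is the normal closure $N$ of the meridians of the components of $C_{B_F\setminus B}$ taken in $\pi_1(U_{B\cup B_F})$. No transversality or genericity hypothesis on the fibers is needed, since Lemma~\ref{lemma:meridians-generators} makes no such assumption.
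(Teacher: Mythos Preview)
Your proof is correct and follows exactly the approach the paper takes: the paper's own proof is the single line ``The result follows from Lemma~\ref{lemma:meridians-generators},'' and your argument spells out precisely how that lemma is applied (iteratively over the irreducible components of $C_{B_F\setminus B}$) to obtain the stated isomorphism.
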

\begin{proof}
The result follows from Lemma~\ref{lemma:meridians-generators}.
\end{proof}

This result is well known in different settings
(cf. \cite{Nori-Zariski,Matsuno-Zariski,Shimada-Zariski}), but we include it here for the sake of completeness.

\begin{cor}
\label{cor:Fsurjective-orb}
Let $F:U\to S$ be an admissible map, let $B\subset S$ be a finite set with $\# B=n+1\geq 1$, and let 
$P\in S\setminus (B\cup B_F)$. 
Let $S_{(n+1,\bar m)}$ be the maximal orbifold structure on $S$ with respect to $F_{|{U_B}}$.
Then, 
$$
\pi_1(F^{-1}(P))\xrightarrow{\iota_*}\pi_1(U_B)\xrightarrow{F_*}\pi^{\orb}_1(S_{(n+1,\bar m)})\ \to 1
$$
is an exact sequence.
\end{cor}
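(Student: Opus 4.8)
The plan is to combine the semidirect product description from Lemma~\ref{lemma:semidirect} with the presentation-passing result of Corollary~\ref{cor:presentation}, and then take the quotient that turns $\pi_1(S\setminus(B\cup B_F))$ into the orbifold group $\pi_1^{\orb}(S_{(n+1,\bar m)})$. First I would enlarge $B$ to $B':=B\cup B_F$ and pick $P\in S\setminus(B'\cup\{P\})$; since $B'\supset B_F$, Lemma~\ref{lemma:semidirect} applies to $F_|:U_{B'}\to S\setminus B'$ and gives the exact sequence
$$
\pi_1(F^{-1}(P))\xrightarrow{\iota_*}\pi_1(U_{B'})\xrightarrow{F_*}\pi_1(S\setminus B')\to 1.
$$
By Corollary~\ref{cor:presentation}, $\pi_1(U_B)$ is the quotient of $\pi_1(U_{B'})$ by the normal closure $N$ of meridians around the components of $C_{B'\setminus B}=C_{B_F\setminus B}$. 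I would then observe that the composition $F_*:\pi_1(U_{B'})\to\pi_1(S\setminus B')\to\pi_1(S\setminus B)$ is surjective (both maps are), and that its kernel contains $\iota_*(\pi_1(F^{-1}(P)))$ together with $N$: indeed a meridian around a component $C$ of $F^*(P_k)$ with $P_k\in B_F\setminus B$ maps under $F_*$ to $F_*(\gamma_k)^{\mult(C)}$, which dies in $\pi_1(S\setminus B)$ since $P_k\notin B$ means $\gamma_k$ is killed there. Hence $F_*$ descends to a surjection $\pi_1(U_B)\to\pi_1(S\setminus B)$.

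The remaining point is to identify precisely the quotient $\pi_1(S\setminus B)/\langle\!\langle \text{images of }\gamma_k^{m_k}, P_k\in B_F\setminus B\rangle\!\rangle$ with $\pi_1^{\orb}(S_{(n+1,\bar m)})$. Here I would use condition~\eqref{dfn:adapted-4} from the construction of adapted base loops: for $P_k\in B_F\setminus B$ the gcd $m_k$ of the multiplicities of the components of $F^*(P_k)$ equals the multiplicity of the fiber, so $m_k=1$ unless $P_k\in M_F$, and when $P_k\in M_F$ the image of $\gamma_k$ is exactly a meridian around $P_k$ in $S\setminus B$ and the relation we impose is $\mu_{P_k}^{m_k}=1$. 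Thus, after quotienting $\pi_1(U_{B'})$ by $\iota_*(\pi_1(F^{-1}(P)))$ and by $N$, what is left is precisely $\pi_1(S\setminus B)$ modulo the relations $\mu_{P_k}^{m_{P_k}}=1$ for $P_k\in M_F\setminus B$, which is by definition $\pi_1^{\orb}(S_{(n+1,\bar m)})$ for the maximal orbifold structure with respect to $F_{|U_B}$ — note that the maximal structure puts multiplicity on exactly the multiple fibers not lying over $B$. Exactness of
$$
\pi_1(F^{-1}(P))\xrightarrow{\iota_*}\pi_1(U_B)\xrightarrow{F_*}\pi_1^{\orb}(S_{(n+1,\bar m)})\to 1
$$
then follows: surjectivity of $F_*$ was shown, and $\ker F_*$ is generated as a normal subgroup by the image of $\pi_1(F^{-1}(P))$ since by Lemma~\ref{lemma:semidirect} the kernel of $\pi_1(U_{B'})\to\pi_1(S\setminus B')$ is exactly $\iota_*(\pi_1(F^{-1}(P)))$, and passing to the further quotients only collapses part of that image (the meridians in $N$ are themselves products of fiber loops times powers of $\gamma_k$, but modulo $\pi_1(S\setminus B)$-relations they lie in the image of $\iota_*$ up to the $\gamma_k^{m_k}$ already being killed).

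The main obstacle I anticipate is the bookkeeping in the last step: carefully checking that quotienting by $N$ does not kill more than intended and that the resulting relations on the base are exactly $\mu_{P_k}^{m_{P_k}}=1$ and nothing stronger — i.e. that the maximality of the orbifold structure with respect to $F_{|U_B}$ matches the multiplicities $m_k$ coming out of condition~\eqref{dfn:adapted-4}. One must also handle the mild subtlety that $\pi_1^{\orb}(S_{(n+1,\bar m)})$ here always has $\Sigma_0\neq\emptyset$ (as $B\neq\emptyset$), so no relation of the form $\prod\mu_P=\prod[a_i,b_i]$ survives independently — it is absorbed into the free presentation of $\pi_1(S\setminus B)$ — which is exactly why the statement is clean. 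Everything else is a direct assembly of Lemmas~\ref{lemma:semidirect} and~\ref{lemma:admissible} and Corollary~\ref{cor:presentation}.
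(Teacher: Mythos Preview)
Your approach is correct and is essentially the same as the paper's: both enlarge $B$ to $B\cup B_F$, invoke the fibration exact sequence of Lemma~\ref{lemma:semidirect} for the top row, pass to $\pi_1(U_B)$ via Corollary~\ref{cor:presentation}, and use the observation from the proof of Lemma~\ref{lemma:admissible} that a meridian around a component of $F^*(P_k)$ maps to $\mu_{P_k}^{\mathrm{mult}}$, so that the image of $N$ in $\pi_1(S\setminus(B\cup B_F))$ is exactly the normal closure of the $\mu_{P_k}^{m_k}$. The paper packages the final step slightly differently, by building a surjective splitting $\sigma:\pi_1^{\orb}(S_{(n+1,\bar m)})\to\pi_1(U_B)/\im(\iota_*)$ rather than chasing the kernel, but the content is the same.

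One comment on exposition: your second paragraph already proves exactness at $\pi_1(U_B)$, since it shows $\pi_1(U_B)/\im(\iota_*)\cong\pi_1^{\orb}(S_{(n+1,\bar m)})$ via $F_*$. Your third paragraph then revisits this point in a way that is redundant and a bit muddled --- the phrase ``the $\gamma_k^{m_k}$ already being killed'' is not accurate (they are not killed in $\pi_1(U_B)$; rather, they land in $\im(\iota_*)$), and ``passing to the further quotients only collapses part of that image'' is not an argument. You can simply drop that paragraph: once the quotient computation is done, exactness follows immediately.
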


 \begin{proof}
 Consider the commutative diagram:
 $$
 \begin{tikzcd}
 1\arrow[r] & \pi_1(F^{-1}(P))\arrow[r]\arrow[d,"\cong"] & \pi_1(U_{B\cup B_F}) \arrow[r, "F_*"]\arrow[d, two heads] & 
 \pi_1(S\setminus(B\cup B_F))\arrow[r]\arrow[d, two heads] & 1\\
 \ & \pi_1(F^{-1}(P))\arrow[r,"\iota_*"] & \pi_1(U_{B})\arrow[r,  "F_*"] & \pi_1^{\orb}(S_{(n+1,\bar m)})
 \end{tikzcd}
 $$
 Here, the vertical arrows are all induced by inclusion, and the top row is exact by
 Lemma~\ref{lemma:semidirect}.
 The surjectivity of $F_*$ in the bottom row follows from the diagram (but also from
 Remark~\ref{rem:inducedorb}).
 Also, $\im(\iota_*)\subseteq\ker F_*$. 
 Let us prove the other inclusion. Since $\im(\iota_*)$ is a quotient of a normal subgroup of 
 $\pi_1(U_{B\cup B_F})$, it is a normal subgroup. Using the exactness of the top row, the
 last paragraph of the proof of Lemma~\ref{lemma:admissible}, Lemma~\ref{lemma:semidirect} and
 Corollary~\ref{cor:presentation}, it is straightforward to see that the map
 $$
 \pi_1(U_{B})/\im(\iota_*)\twoheadrightarrow \pi_1^{\orb}(S_{(n+1,\bar m)})
 $$
 has a splitting $\sigma$ taking $F_*(\gamma_k)$ to the class of $\gamma_k$ for all
 $\gamma_k\in\Gamma_S$, where $\Gamma_S$ is an adapted geometric set of base loops with
 respect to $F$ and $B\cup B_F$. The commutativity of the diagram above implies that $\sigma$
 is surjective.
 \end{proof}

\begin{rem}\label{rem:Catanese-ses}
Suppose that $X$ is a projective surface, $S$ a smooth projective curve, and $F:X\to S$ is a
surjective holomorphic map with connected fibers. Let $S_{(0,\bar m)}$ be the maximal orbifold
structure of $S$ with respect to $F:X\to S$. As mentioned in the proof of
\cite[Lemma 4.2]{Catanese-Fibred}, one also has an exact sequence like the one in
Corollary~\ref{cor:Fsurjective-orb}, namely
$
\pi_1(F^{-1}(P))\to\pi_1(X)\to\pi_1^{\orb}(S_{(0,\bar m)})\to 1
$
where the first arrow is induced by the inclusion of a generic fiber $F^{-1}(P)$ over $P\in S$.
\end{rem}

\subsection{Characteristic Varieties}
Characteristic varieties are invariants of finitely presented groups $G$, and they can be computed using
any connected topological space $X$ (having the homotopy type of a finite CW-complex) such that 
$G=\pi_1(X,x_0)$, $x_0\in X$ as follows. Let us denote $H:=H_1(X;\ZZ)=G/G'$.
Note that the space of characters on $G$ is a complex torus
$
\TT_G:=\Hom(G,\CC^*)=\Hom(H,\CC^*)=H^1(X;\CC^*).
$
This $\TT_G$ can have multiple connected components, but it only contains one connected torus, which we
denote by $\TT_G^{\one}$.

\begin{dfn}
\label{def-char-var}
The $k$-th~\emph{characteristic variety}~of $G$ is defined by:
\[
\cV_{k}(G):=\{ \xi \in \TT_G\mid \dim H^1(X,\CC_{\xi}) \ge k \},
\]
where $H^1(X,\CC_{\xi})$ is classically called the 
\emph{twisted cohomology of $X$ with coefficients in the local system $\xi\in \TT_G$}.
It is also customary to use $\cV_{k}(X)$ for $\cV_{k}(G)$ whenever $\pi_1(X)=G$.
\end{dfn}

If $G=\FF_r*\ZZ_{m_1}*\dots*\ZZ_{m_s}$, the torus $\TT_G$ is a disjoint 
union of $\TT_G^{\one}\cong (\CC^*)^r$ and translations $\TT_G^{\lambda}$ of $\TT_G^{\one}$ by every 
element $\lambda$ of $C=C_{m_1}\times\ldots\times C_{m_s}$, where $C_{m}$ is the multiplicative group 
of $m$-th roots of unity. Given a torsion element $\rho\in \TT_G$ one can define
\begin{equation}
\label{eq:depth}
\depth(\rho):=\max\{k\in \ZZ_{\geq 0} \mid \TT_G^\rho\subset \cV_k(G)\}.
\end{equation}

\begin{rem}
\label{rem:general-type}
In~\cite[Prop. 2.10]{ACM-characteristic}, a complete description of $\cV_{k}(G)$ is given for 
orbifold fundamental groups of smooth quasi-projective curves. If $S=S_{(n+1,\bar m)}$ one can check that
$\cV_1(\pi_1^{\orb}(S))\neq\emptyset$ if and only if $\pi_1^{\orb}(S)$ is not abelian, or equivalently, 
if $\chi^{\orb}(S)<0$, in which case $S$ is called an \emph{orbifold of general type}.
\end{rem}

\subsection{Iitaka's (Quasi)-Albanese varieties}
Let $X$ be a smooth projective variety. The Albanese variety is defined as
\begin{equation*}
\Alb(X):= H^0(X,\Omega^1_X)^\vee/\mathrm{Free } H_1(X;\ZZ),
\end{equation*}
where $\vee$ denotes the dual as a $\CC$-vector space, and $\mathrm{Free } H_1(X;\ZZ)$ denotes
the torsion-free factor of $H_1(X;\ZZ)$. It is an abelian variety. Moreover, fixing a base point
$x_0\in X$, the Albanese morphism $\alpha_X:X \to\Alb(X)$ defined by
$x\mapsto \left(\omega\mapsto \int_{x_0}^x\omega\right)$ is an algebraic morphism.
Iitaka~\cite{Iitaka} generalized this to smooth quasi-projective varieties $U$ (for a detailed
description, see~\cite{Fujino-qa}), $\Alb(U)$ being a semiabelian variety in this case.
The Albanese map $\alpha_U$ satisfies that $(\alpha_U)_*:H_1(U;\ZZ)\to H_1(\Alb(U);\ZZ)$ is
surjective, whose kernel is $\mathrm{Tors }_{\ZZ} H_1(U;\ZZ)$. Moreover, if $X$ is a smooth
compactification of $U$ such that $D:=X\setminus U$ is a simple normal
crossing divisor, and $i:U\hookrightarrow X$ is the inclusion, then we have an exact sequence
\begin{equation}\label{eq:Albaneseses}
1\to (\CC^*)^r\to\Alb(U)\xrightarrow{\Alb(i)}\Alb(X)\to 1,
\end{equation}
where $r=\dim_{\CC} H^0(X,\Omega^1_X(\log D))-\dim_{\CC} H^0(X,\Omega^1_X)$.

We include here two technical lemmas about Albanese varieties.

\begin{lemma}\label{lemma:alb-admissible}
Let $U$ be a smooth quasi-projective surface such that $\pi_1(U)\cong\ZZ$.
Then, $\Alb(U)\cong \CC^*$, and $\alpha_U:U\to\CC^*$ is
an admissible map with no multiple fibers inducing isomorphisms in fundamental groups.
\end{lemma}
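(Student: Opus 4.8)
The plan is to produce $\Alb(U)$ explicitly from the hypothesis $\pi_1(U)\cong\ZZ$ and then verify that the Albanese morphism has the required properties. First I would note that since $\pi_1(U)\cong\ZZ$ we have $H_1(U;\ZZ)\cong\ZZ$, so it is torsion-free, and $H^0(X,\Omega^1_X(\log D))$ has dimension $\dim_\CC H^0(X,\Omega^1_X)+r$ where $r$ is the rank of the toric part in the sequence~\eqref{eq:Albaneseses}. Since $(\alpha_U)_*\colon H_1(U;\ZZ)\to H_1(\Alb(U);\ZZ)$ is surjective with kernel the torsion of $H_1(U;\ZZ)$ (which is trivial here), it is an isomorphism $\ZZ\xrightarrow{\ \sim\ }H_1(\Alb(U);\ZZ)$. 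A semiabelian variety whose integral $H_1$ is $\ZZ$ must have $\Alb(X)$ a point and toric rank one, i.e. $\Alb(U)\cong\CC^*$; one sees this from~\eqref{eq:Albaneseses}, since $H_1((\CC^*)^r;\ZZ)=\ZZ^r$ injects into $H_1(\Alb(U);\ZZ)=\ZZ$ forcing $r\le 1$, and $H_1(\Alb(X);\ZZ)$ is a quotient of $\ZZ$ while also being $\ZZ^{2\dim\Alb(X)}$, so $\Alb(X)=0$ and hence $r=1$.

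Next I would argue that $\alpha_U\colon U\to\CC^*$ is admissible in the sense of Section~\ref{sec:admissible}. By construction $\alpha_U$ is an algebraic morphism of a smooth quasi-projective surface to the smooth quasi-projective curve $\CC^*$; it is surjective because its image generates $\Alb(U)$ as a group and a constructible subgroup-generating subset of $\CC^*$ that is not all of $\CC^*$ would force $(\alpha_U)_*$ on $H_1$ to fail to be onto. For the enlargement condition, I would pass to a smooth compactification $X$ of $U$ with $D=X\setminus U$ simple normal crossing, take $\PP^1\supset\CC^*$, resolve the rational map $X\dashrightarrow\PP^1$, and then apply Stein factorization to get connected fibers; the resulting $\hat F\colon\hat U\to\PP^1$ is the desired holomorphic enlargement with connected fibers, and by Remark~\ref{rem:h1-torsion}-type reasoning (or directly, since the generic fiber of $\alpha_U$ must have $\pi_1$ mapping into the kernel of $\ZZ\to\ZZ$, i.e.\ trivially in homology) the generic fiber is already connected, so $\alpha_U$ itself is admissible.

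The remaining two points are that $\alpha_U$ induces an isomorphism on fundamental groups and has no multiple fibers. For the fundamental group statement: $\alpha_U$ admissible with connected generic fibers induces a surjection $\pi_1(U)\to\pi_1(\CC^*)=\ZZ$ on the one hand (Remark~\ref{rem:inducedorb}/Remark~\ref{rem:h1-torsion}), but on the other hand this surjection is, up to the identification $\pi_1(U)\cong\ZZ$, an endomorphism of $\ZZ$ which is also surjective on $H_1$ by the defining property of the Albanese map; a surjective endomorphism of $\ZZ$ is an isomorphism (or invoke Lemma~\ref{lemma:EpiImpliesIso}, $\ZZ$ being Hopfian). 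For the absence of multiple fibers: if $\alpha_U$ had a multiple fiber over $P\in\CC^*$ of multiplicity $m>1$, then by Corollary~\ref{cor:Fsurjective-orb} applied with $B$ a single point (or by the standard orbifold exact sequence, cf. Remark~\ref{rem:inducedorb}) the map $\pi_1(U)\to\pi_1^{\orb}(\CC^*_{(1,(m))})$ would be surjective; but $\pi_1^{\orb}(\CC^*_{(1,(m))})\cong\ZZ*\ZZ_m$ is non-abelian, contradicting $\pi_1(U)\cong\ZZ$. I expect the main obstacle to be the careful justification that $\Alb(U)\cong\CC^*$ rather than some higher-dimensional semiabelian variety with quotient $H_1=\ZZ$ — everything else is then a short diagram chase — and the cleanest route is the dimension count via~\eqref{eq:Albaneseses} together with surjectivity of $(\alpha_U)_*$ on $H_1$.
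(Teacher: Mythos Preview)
Your overall strategy matches the paper's, and your arguments for $\Alb(U)\cong\CC^*$, for the isomorphism on $\pi_1$ (Hopfianity of $\ZZ$), and for the absence of multiple fibers (surjection onto the non-abelian $\ZZ*\ZZ_m$) are correct and essentially identical to what the paper does. The gap is in the admissibility step, specifically in showing that $\alpha_U$ has connected generic fibers. Your justification is that ``the generic fiber of $\alpha_U$ must have $\pi_1$ mapping into the kernel of $\ZZ\to\ZZ$, i.e.\ trivially in homology,'' but this concerns $\pi_1$ of the fiber, not $\pi_0$: a disconnected fiber can certainly have each component map trivially on $\pi_1$. You also write that after Stein factorization the target is $\PP^1$, but Stein factorization of the enlargement gives $\hat U\to S\to\PP^1$ with $S$ an a priori arbitrary smooth projective curve; you have not said why $S=\PP^1$, nor why the resulting connected-fiber map restricted to $U$ coincides with $\alpha_U$.

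The paper handles this as follows. After Stein factorization one has an admissible map $f:U\to V\subset S$. Since $\pi_1(X)$ is a quotient of $\pi_1(U)\cong\ZZ$ (Lemma~\ref{lemma:meridians-generators}) and surjects onto $\pi_1(S)$ (Remark~\ref{rem:h1-torsion}), one gets $S=\PP^1$. Then $\pi_1(V)$ is both a quotient of $\ZZ$ (via $f_*$) and surjects onto $\pi_1(\CC^*)=\ZZ$ (since $\alpha_U$ factors through $f$), forcing $V\cong\CC^*$. Finally, and this is the step you are missing, the \emph{universal property of the Albanese} implies that $f:U\to\CC^*$ agrees with $\alpha_U$ up to an isomorphism of the target; only then can you conclude that $\alpha_U$ itself is admissible.
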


\begin{proof}
We have that $\Alb(U)\cong\CC^*$ because the latter is the only semiabelian variety
whose fundamental group is isomorphic to $\ZZ$. Note that $\alpha_U$ must be dominant.

Let us consider a holomorphic enlargement $F: X\to\PP^1$ of $\alpha_U$. Since $\pi_1(X)$ is a
quotient of $\ZZ$ by Lemma~\ref{lemma:meridians-generators}, using Stein factorization on $F$
and Remark~\ref{rem:h1-torsion}, we obtain that $\alpha_U$ factors through an admissible map
$f:U\to V\subset\PP^1$.
Thus,  $\pi_1(V)\cong\ZZ$, and thus $V=\CC^*$. 
By the universal property of the Albanese, $f:U\to\CC^*$ coincides with $\alpha_U$ up
to isomorphism of the target, so $\alpha_U$ is admissible.  Finally, by
Remark~\ref{rem:inducedorb}, $\alpha_U$ has no multiple fibers.
\end{proof}

\begin{lemma}\label{lemma:albanese}
Let $\hS$ be a smooth quasi-projective curve such that $\pi_1(\hS)\cong\FF_r$, for $r\geq 1$. 
Let $U$ be a smooth quasi-projective surface and $F:U\to \hS$ be an admissible map such that
$\overline F:X\to S$ is a holomorphic extension of $F$ with connected fibers, where $X$ is
a smooth projective compactification of $U$, $X\setminus U$ is a simple normal crossing
divisor and $S$ is a smooth projective curve of genus $g_{S}$. Let
$i_U:U\hookrightarrow X$ be the inclusion.

Suppose that $F_*:\pi_1(U)\to\pi_1(\hS)$ is an isomorphism.
\begin{enumerate}
\item If $g_{S}= 0$, then,
	\begin{enumerate}
	\item \label{lemma:albanese-eq0inj}$\alpha_{\hS}: \hS\to \Alb(\hS)$ is injective;
	\item \label{lemma:albanese-eq0torus}$\Alb(U)\cong (\CC^*)^r\cong \Alb(\hS)$;
	\item \label{lemma:albanese-eq0iso}$\Alb(F):\Alb(U)\to\Alb(\hS)$ is an isomorphism;
	\item \label{lemma:albanese-eq0} up to isomorphism in the target, the map $F$ coincides with the restriction of $\alpha_U$ 
	to its image, namely $\alpha_U: U\to \alpha_U(U)$. 
	\end{enumerate} 
\item If $g_{S}\geq 1$, then,
	\begin{enumerate}
	\item \label{lemma:albanese-geq1inj} $\alpha_{S}: S\to \Alb(S)$ is injective;
	\item \label{lemma:albanese-geq1iso} $\Alb(\overline F):\Alb(X)\to\Alb(S)$ is an isomorphism;
	\item \label{lemma:albanese-geq1} 
	up to isomorphism in the target, the map $F$ coincides with the restriction of $\alpha_X\circ i_U$ 
	to its image, namely $\alpha_X\circ i_U: U\to \alpha_X(U)$.
	\end{enumerate}
\end{enumerate}
\end{lemma}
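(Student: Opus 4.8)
The plan is to derive every assertion from a single key fact: under the hypothesis that $F_*\colon\pi_1(U)\to\pi_1(\hS)$ is an isomorphism, the induced map $\Alb(F)\colon\Alb(U)\to\Alb(\hS)$ is an isomorphism of semiabelian varieties. First I would note that $\pi_1(U)\cong\pi_1(\hS)\cong\FF_r$, so $H_1(U;\ZZ)\cong\ZZ^r\cong H_1(\hS;\ZZ)$ are torsion free and $F_*$ is an isomorphism between them. Since the comparison maps $H_1(V;\ZZ)\to H_1(\Alb(V);\ZZ)$ are natural in $V$ and are isomorphisms when $H_1(V;\ZZ)$ is torsion free, the naturality square for $F$ shows that $\Alb(F)$ induces an isomorphism on $H_1(\cdot;\ZZ)$. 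Then I would argue that a surjective morphism of semiabelian varieties inducing an isomorphism on $H_1(\cdot;\ZZ)$ is an isomorphism: $\Alb(F)$ is surjective because $F$ is dominant (the image is a closed algebraic subgroup containing the generating set $\alpha_{\hS}(\hS)=\alpha_{\hS}(F(U))$); writing $N^0$ for the identity component of the kernel, a semiabelian subvariety, and using that semiabelian varieties are aspherical, the fibration $N^0\to\Alb(U)\to\Alb(U)/N^0$ gives an exact sequence $0\to H_1(N^0)\to H_1(\Alb(U))\to H_1(\Alb(U)/N^0)\to 0$, while $\Alb(U)/N^0\to\Alb(\hS)$ is an isogeny, hence injective with finite cokernel on $H_1$. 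Since the total composite on $H_1$ is an isomorphism, $H_1(N^0)=0$ forces $N^0$ trivial and the remaining isogeny has degree one.

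For the case $g_S=0$, I would use that $\hS=\PP^1\setminus\Sigma_0$ with $\#\Sigma_0=r+1\geq 2$; applying the exact sequence~\eqref{eq:Albaneseses} to $(\PP^1,\Sigma_0)$ together with $\Alb(\PP^1)=0$ gives $\Alb(\hS)\cong(\CC^*)^r$, so the key fact yields parts~\ref{lemma:albanese-eq0torus} and~\ref{lemma:albanese-eq0iso}. For the injectivity in part~\ref{lemma:albanese-eq0inj}, I would write $\Sigma_0=\{p_0,\dots,p_r\}$ in an affine coordinate $z$, take the basis $\omega_i=\tfrac{dz}{z-p_i}-\tfrac{dz}{z-p_0}$ of $H^0(\PP^1,\Omega^1_{\PP^1}(\log\Sigma_0))$, and observe that the resulting quasi-Albanese map has $i$-th coordinate proportional to $\tfrac{z-p_i}{z-p_0}$; its first coordinate is a nonconstant M\"obius transformation, hence injective on $\hS\subset\PP^1\setminus\{p_0\}$. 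For part~\ref{lemma:albanese-eq0}, the composite $\iota:=\Alb(F)^{-1}\circ\alpha_{\hS}\colon\hS\to\Alb(U)$ is an injective morphism satisfying $\iota\circ F=\alpha_U$ by naturality; since $F$ is surjective, $\iota$ is a bijection (hence an isomorphism of varieties) of $\hS$ onto $\alpha_U(U)$, so $F$ agrees with $\alpha_U\colon U\to\alpha_U(U)$ up to the isomorphism $\iota$ of the target.

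For the case $g_S\geq 1$, part~\ref{lemma:albanese-geq1inj} is the classical embedding theorem for the Abel--Jacobi map of a smooth projective curve of positive genus. For part~\ref{lemma:albanese-geq1iso}, I would apply~\eqref{eq:Albaneseses} to both $(X,D)$ and $(S,\Sigma_0)$, which identifies $\Alb(X)$ (resp. $\Alb(S)$) with the quotient of $\Alb(U)$ (resp. $\Alb(\hS)$) by its maximal toric subgroup $\ker\Alb(i_U)$ (resp. $\ker\Alb(i_{\hS})$, with $i_{\hS}\colon\hS\hookrightarrow S$), the quotient maps being $\Alb(i_U)$ and $\Alb(i_{\hS})$. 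Functoriality of the Albanese applied to $\overline{F}\circ i_U=i_{\hS}\circ F$ gives $\Alb(i_{\hS})\circ\Alb(F)=\Alb(\overline{F})\circ\Alb(i_U)$; since the maximal toric subgroup of a semiabelian variety is intrinsic, the isomorphism $\Alb(F)$ carries $\ker\Alb(i_U)$ isomorphically onto $\ker\Alb(i_{\hS})$ and hence descends to an isomorphism of the abelian quotients, which by the commuting square is precisely $\Alb(\overline{F})$. Part~\ref{lemma:albanese-geq1} then follows as in the genus-$0$ case: from $\alpha_S\circ\overline{F}=\Alb(\overline{F})\circ\alpha_X$ and $\overline{F}\circ i_U=i_{\hS}\circ F$ one gets $\Alb(\overline{F})^{-1}\circ\alpha_S\circ i_{\hS}\circ F=\alpha_X\circ i_U$, and the injective morphism $\iota:=\Alb(\overline{F})^{-1}\circ\alpha_S\circ i_{\hS}$ (injective by parts~\ref{lemma:albanese-geq1inj} and~\ref{lemma:albanese-geq1iso}) is a bijection of $\hS$ onto $\alpha_X(U)$, whence $F=\iota^{-1}\circ(\alpha_X\circ i_U)$.

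The step I expect to be the main obstacle is the key fact, i.e. promoting ``isomorphism on $H_1(\cdot;\ZZ)$'' to ``isomorphism'' for a morphism of semiabelian varieties, where one must control a possibly disconnected kernel and invoke the asphericity of semiabelian varieties. A secondary technical point is matching up the toric parts when $g_S\geq 1$; I plan to handle this through the intrinsic characterization of the maximal toric subgroup rather than a dimension count, so the argument also covers the edge case $\#\Sigma_0=1$ (where $\Alb(\hS)=\Alb(S)$). Everything else reduces to the naturality of the (quasi-)Albanese construction and the surjectivity of $F$.
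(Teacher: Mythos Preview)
Your proof is correct and takes a genuinely different route from the paper's. The paper first computes $\dim\Alb(X)=g_S$ by showing, via Corollary~\ref{cor:Fsurjective-orb} and Remark~\ref{rem:Catanese-ses}, that $\overline F_*:\pi_1(X)\to\pi_1^{\orb}(S_{(0,\bar m)})$ is an isomorphism and then abelianizing; in the case $g_S\geq 1$ it then argues that $\Alb(\overline F)$ is a surjective map between abelian varieties of equal dimension, hence a finite cover, and finally uses connectedness of the fibers of $\overline F$ to rule out nontrivial degree. In the case $g_S=0$ the paper invokes Cartier duality to show $\Alb(F):(\CC^*)^r\to(\CC^*)^r$ is an isomorphism. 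You instead establish directly that $\Alb(F):\Alb(U)\to\Alb(\hS)$ is an isomorphism of semiabelian varieties from the $H_1$ isomorphism and asphericity, then in the $g_S\geq 1$ case descend to $\Alb(\overline F)$ via the intrinsic characterization of the maximal affine (toric) subgroup. Your approach is more self-contained in that it avoids the orbifold exact sequences of Section~\ref{sec:pi1-fiber-type}, and it unifies the two cases under a single ``key fact''; the paper's approach is shorter because it leverages machinery already in place and handles~\ref{lemma:albanese-geq1iso} without ever needing to analyze $\Alb(U)$ when $g_S\geq 1$.
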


 \begin{proof}
The variety $\Alb(S)$ has (complex) dimension $g_{S}$. Similarly, since $X$ is a projective variety,
the dimension of $\Alb(X)$ is half of the rank of $H_1(X,\ZZ)$. Let us show that the rank of $H_1(X,\ZZ)$ is $2g_S$.
 Applying Corollary~\ref{cor:Fsurjective-orb} to $F:U\to S'$, the morphism $\pi_1(\overline{F}^{-1}(P))\to\pi_1(X)$ induced by inclusion can be seen to be trivial, where $\overline{F}^{-1}(P)$ is a generic fiber of $\overline{F}$. Let $S_{(0,\bar m)}$ be the maximal orbifold structure of $S$ with 
 respect to $\overline{F}:X\to S$. By  Remark~\ref{rem:Catanese-ses}, $\overline{F}_*:\pi_1(X)\to\pi_1^{\orb}(S_{(0,\bar m)})$ 
 is an isomorphism. Abelianizing, we obtain that the rank of $H_1(X;\ZZ)$ is $2g_S$. In particular, if 
 $g_{S}=0$, then $\Alb(X)$ (resp. $\Alb(S=\PP^1)$) is a point, and thus, using equation \eqref{eq:Albaneseses},
 $\Alb(U)$ (resp. $\Alb(\hS)$) is a torus, which must necessarily have dimension $r$. This concludes the proof of part~\eqref{lemma:albanese-eq0torus}.
 
 Assume that $g_{S}=0$. Part~\eqref{lemma:albanese-eq0inj} is well known. Note that $\Alb(F):\Alb(U)\to\Alb(\hS)$ is an algebraic map which is a homomorphism between $(\CC^*)^r$ and itself 
 and induces an isomorphism on fundamental groups. 
 By Cartier duality (see~\cite{Russell-GeneralizedAlbanese}), $\Alb(F)$ is an isomorphism, and part~\eqref{lemma:albanese-eq0iso} is proved. Part~\eqref{lemma:albanese-eq0} now follows both from the functoriality of the Albanese map,
 and from parts~\eqref{lemma:albanese-eq0inj} and \eqref{lemma:albanese-eq0iso}.
 
 Assume now that $g_{S}\geq 1$. Part~\eqref{lemma:albanese-geq1inj} is the well known Abel-Jacobi theorem. 
 Let us prove part~\eqref{lemma:albanese-geq1iso}. Note that $\overline F:X\to S$ is surjective, so the classical Albanese map
 $\Alb(\overline F):\Alb(X)\to \Alb(S)$ is a surjective group homomorphism. Hence, 
 $\Alb(\overline F)$ must be a fibration, and the dimension of the fiber is $0$ when
 $\Alb(X)$ and $\Alb(S)$ have the same dimension, which we know equals $2g_{S}$ in both cases.
 Thus $\Alb(\overline F):\Alb(X)\to \Alb(S)$ is a finite covering. Since the fibers of
 $\overline F:X\to S$ are connected, the functoriality of the Albanese and
 Remark~\ref{rem:h1-torsion} imply that $\Alb(\overline F)$ is an isomorphism. In particular,
 $\alpha_{X}(X)$ is isomorphic to $S$. As in the $g_{S}=0$ case,
 part~\eqref{lemma:albanese-geq1} now follows both from the functoriality of the Albanese,
 and from parts~\eqref{lemma:albanese-geq1inj} and \eqref{lemma:albanese-geq1iso}.
 \end{proof}
 
\section{Main theorem}\label{sec:main}

Our purpose in this section is to give a necessary geometric condition for a curve to have
the fundamental group of its complement isomorphic to $\FF_r*\ZZ_{m_1}*\ldots*\ZZ_{m_s}$.
We will show that these curves come from admissible maps, with the only possible exceptions
occurring when the fundamental group is finite and the compact surface is not simply connected
(see Remark~\ref{rem:exception}).
We will prove the main theorem in two stages. If $r\geq 1$ this is done in
Theorem~\ref{lemma:existencePencil}.
If $r=0$ and the group is infinite, this is done in Theorem~\ref{thm:main2}. 
The strategy to find the admissible map in Theorem~\ref{thm:main2} heavily relies on the
structure of the fundamental group of $\pi_1(X\setminus D)$. The idea is to find a free
finite order subgroup whose associated covering falls in the hypotheses of
Theorem~\ref{lemma:existencePencil}.

\subsection{Proof of Theorem~\ref{thm:main:intro}}\label{sec:main:proof}
Theorem~\ref{thm:main:intro} will be stated in two ways depending on whether or not the
first Betti number of $X\setminus D$ vanishes.

\begin{thm}[Main theorem, $r\geq 1$]\label{lemma:existencePencil}
Theorem~\ref{thm:main:intro} holds if $r\geq 1$. 
\end{thm}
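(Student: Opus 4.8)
The plan is to invoke Arapura's structure theorem together with its twisted version to produce the admissible map, and then verify the orbifold-theoretic properties using the tools developed in Section~\ref{sec:preliminaries}. First I would dispose of the degenerate case $\pi_1(X\setminus D)\cong\ZZ$ (i.e.\ $r=1$, $s=0$): here $X\setminus D$ (or rather its smooth locus after removing the base points, using Lemma~\ref{lemma:meridians-generators}) has $\pi_1\cong\ZZ$, and Lemma~\ref{lemma:alb-admissible} directly produces the admissible map $\alpha_U:U\to\CC^*$ inducing an isomorphism on fundamental groups, so we may set $F=\alpha_U$, $S=\PP^1$, $g_S=0$, $n=1$, $\bar m=\emptyset$. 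For the main case, since $\pi_1(X\setminus D)\cong\FF_r*\ZZ_{m_1}*\cdots*\ZZ_{m_s}$ is infinite and $r\geq 1$, the group is non-abelian (or if $s=0$, $r\geq 2$ and it is free non-abelian; if $r=1$ with $s\geq 1$ it is still non-abelian), so by Remark~\ref{rem:general-type} its first characteristic variety is non-empty and contains a positive-dimensional component through the origin. Arapura's theorem~\cite{Arapura-geometry} (and~\cite{ACM-characteristic} for the torsion-translated components governing the $\ZZ_{m_i}$ factors) then yields a surjective orbifold morphism $F_|:X\setminus D\to S_{(n+1,\bar m')}$ onto an orbifold curve of general type with $\Sigma_0\neq\emptyset$, which extends to an admissible rational map $F:X\dashrightarrow S$; replacing the orbifold structure by the maximal one with respect to $F_|$ gives part~\ref{lemma:existencePencil-2}.

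Next, for part~\ref{lemma:existencePencil-3}, the key point is that $F_*:\pi_1(X\setminus D)\to\pi_1^{\orb}(S_{(n+1,\bar m)})$ is surjective by Remark~\ref{rem:inducedorb} (connected generic fibers), and both groups are finitely generated free products of cyclic groups; the target has the ``right size'' because the pencil was extracted from the characteristic variety so that the induced map on $H_1$ and on the relevant components matches. The surjection $F_*$ is then an epimorphism of $\FF_r*\ZZ_{m_1}*\cdots*\ZZ_{m_s}$ onto $\pi_1^{\orb}(S_{(n+1,\bar m)})\cong\FF_{r'}*\ZZ_{m_1'}*\cdots$; comparing abelianizations and torsion shows the two groups are abstractly isomorphic, and then Lemma~\ref{lemma:EpiImpliesIso} (Hopfian property) forces $F_*$ to be an isomorphism. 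Concretely one checks $r'=r$ (so $n=r-2g_S$) and that the multiplicities $\bar m$ coincide with $(m_1,\dots,m_s)$ up to reordering, using that the torsion of $\pi_1^{\orb}(S_{(n+1,\bar m)})$ with $\Sigma_0\neq\emptyset$ is exactly the free product of the $\ZZ_{m_i}$'s.

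Finally, for part~\ref{lemma:existencePencil-1}: write $D=D_f\cup D_t$ where $D_f=\overline{F^{-1}(\Sigma_0)}$ is the union of the $n+1$ fibers over the distinguished points and $D_t$ is the union of the remaining components. Since $F_|:X\setminus D\to S\setminus\Sigma_0$ is well-defined and surjective, every component of $D_t$ maps dominantly to $S$ (it is not fiber-type), so a meridian $\gamma$ around such a component satisfies $F_*(\gamma)=1$ in $\pi_1(S\setminus\Sigma_0)$; but the analysis via Lemma~\ref{lemma:semidirect}/Corollary~\ref{cor:Fsurjective-orb} combined with the isomorphism in part~\ref{lemma:existencePencil-3} shows $\ker F_*=1$, hence the meridians of $D_t$ are trivial in $\pi_1(X\setminus D)$. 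The identity $n=r-2g_S$ follows by abelianizing the isomorphism of part~\ref{lemma:existencePencil-3}. \emph{The main obstacle} I anticipate is establishing that the admissible map extracted from Arapura's theorem has target orbifold of \emph{exactly} the right size — i.e.\ that the surjection $F_*$ is actually an isomorphism and not a proper quotient. This is where the Hopfian property (Lemma~\ref{lemma:EpiImpliesIso}) is essential, but one still must rule out the possibility that $\pi_1^{\orb}(S_{(n+1,\bar m)})$ is a proper quotient of $\FF_r*\ZZ_{m_1}*\cdots*\ZZ_{m_s}$ by carefully matching invariants (first Betti number, torsion subgroups, and the number of multiple fibers) — and this matching is precisely what forces us into the maximal orbifold structure and, when $s\geq 1$, requires the finer information about torsion-translated characteristic varieties from~\cite{ACM-characteristic} rather than just~\cite{Arapura-geometry}.
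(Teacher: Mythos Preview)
Your overall architecture is correct and matches the paper's: handle $\pi_1\cong\ZZ$ via Lemma~\ref{lemma:alb-admissible}, extract the admissible map from a positive-dimensional component of the characteristic variety via~\cite{Arapura-geometry,ACM-characteristic}, and finish with the Hopfian property. However, there is a genuine gap in the step where you claim that ``comparing abelianizations and torsion shows the two groups are abstractly isomorphic.'' This is false as stated: the surjection $\FF_r*\ZZ_2*\ZZ_3\twoheadrightarrow\FF_r*\ZZ_6$ (identity on the free factor, abelianization on $\ZZ_2*\ZZ_3$) induces an isomorphism on abelianizations, yet the groups are not isomorphic (the source has no element of order~$6$). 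So you cannot conclude $G\cong G_1$ from $H_1$ alone, and since the map is not an endomorphism the Hopfian lemma does not apply directly. The paper closes this gap by a careful comparison of \emph{depths} in the characteristic variety: for the specific torsion character $\lambda=(\xi_{m_1},\dots,\xi_{m_s})$, the equalities $\depth(\lambda^j)=\depth(\lambda'^j)$ for $j=1,\dots,m-1$ pin down the multiset $\{m_i\}$ exactly, using that $\#\{i:m_i\mid j\}=s+r-1-\depth(\lambda^j)$. This depth argument is also what establishes $\Sigma_0\neq\emptyset$ (the case $n'=0$ is ruled out by comparing $\depth_{G_1}(\one)$ and $\depth_G(\one)$), which you assert without proof.

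Two smaller points. First, your claim that $\cV_1$ has a positive-dimensional component \emph{through the origin} fails when $r=1$ and $s\geq 1$; the paper works instead with the translated component $\TT_G^\lambda$ from the outset, which is always $r$-dimensional. Second, in part~\ref{lemma:existencePencil-1} you assert that every component of $D_t$ maps dominantly to $S$, but $D_t$ may contain \emph{vertical} components lying in fibers over points of $S\setminus\Sigma_0$; the paper treats $D_t=D_v\cup D_h$ separately. Your conclusion (meridians lie in $\ker F_*=1$) still goes through for vertical components, so this is an oversight in the description rather than in the logic.
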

\begin{proof}
If $\pi_1(X\setminus D)\cong\ZZ$, ~\ref{lemma:existencePencil-2} and \ref{lemma:existencePencil-3} follow from
Lemma~\ref{lemma:alb-admissible} for $S_{(n+1,\bar m)}=\PP^1_{(2,-)}\cong\CC^*$.

Let us now prove ~\ref{lemma:existencePencil-2} and \ref{lemma:existencePencil-3} assuming that the group
$G:=\pi_1(X\setminus D)\cong\FF_{r}*\ZZ_{m_1}*...*\ZZ_{m_s}$ is non-abelian (i.e. $\not\cong\ZZ$). Using
Remark~\ref{rem:general-type} and~\cite[Prop. 2.10]{ACM-characteristic}, the $1$-st characteristic variety
of the group $G$ has a positive dimensional irreducible component $\TT_G^\lambda$ associated with the
torsion character $\lambda=(\xi_1,\dots,\xi_s)\in\TT_G$, where $\xi_i\in\CC$ is an $m_i$-th primitive
root of 1. Since $\TT_G^\lambda$ has dimension $r\geq 1$, by~\cite[Thm. 1]{ACM-characteristic}, there exists
an orbifold structure $S_{(n',\bar m')}$, $n'\geq 0$ on a smooth projective curve $S$ of genus $g_S$ and an
admissible map $F:X\dashrightarrow S$ which induces an orbifold morphism $F_|:X\setminus D\to S_{(n',\bar m')}$
such that $S_{(n',\bar m')}$ is maximal with respect to $F_|$ and $F_|^*(V_{\bar m'})=\TT_G^\lambda$ for some
component $V_{\bar m'}=\TT_{G_1}^{\lambda'}$ of the $1$-st characteristic variety of the orbifold fundamental
group $G_1=\pi_1^{\textrm{orb}}(S_{(n',\bar m')})$. Since $F$ has connected generic fibers, $F_|$ induces a
surjection of (orbifold) fundamental groups $F_*:G\to G_1$ (cf. \cite[Proposition 2.6]{ACM-characteristic})
and hence an injection $F^*:\mathcal{V}_k(G_1)\hookrightarrow \mathcal{V}_k(G)$ for all~$k$.

Since $\TT_G^{\lambda}$ is positive dimensional, it contains a non-torsion character.
By~\cite[Lemma 6.4]{ACM-characteristic}, the admissible map $F$ is unique such that
$F_|^*(\TT_{G_1}^{\lambda'})=\TT_G^{\lambda}$ and $\depth(\lambda)=\depth(\lambda')$.

Assume $n'=0$. According to the structure of its characteristic varieties
(cf. \cite[Proposition 2.11]{ACM-characteristic}),
one has $\dim \TT_{G_1}=2g_S-2=r-1=\dim \TT_{G}$. Finally, $1\in \mathcal{V}_{r+1}(G_1)$ but
$1\notin \mathcal{V}_{r+1}(G)$. This contradicts the inclusion of characteristic varieties for $k=r+1$.
Therefore $n'=n+1$, $n\geq 0$ and hence
\begin{equation*}
\pi_1^{\textrm{orb}}(S_{(n+1,\bar m')})\cong \FF_{r'}*\ZZ_{m_1'}*\ldots*\ZZ_{m'_{s'}},
\end{equation*}
where $n=r'-2g_S$. For $k=0$, $F_|^*(\TT_{G_1}^{\lambda'})=\TT_G^{\lambda}$ implies $r=r'$.

To show~\ref{lemma:existencePencil-2}, it remains to show that $s=s'$ and $m_i=m_i'$ for all $i=1,\ldots, s$.
Using~\cite[Prop. 2.10]{ACM-characteristic} and~\eqref{eq:depth}, one obtains
$s+r-1=\depth(\lambda)=\depth(\lambda')\leq s'+r-1$, so $s\leq s'$.
In addition, since $F_|^*$ induces injections between characteristic varieties,
one obtains $s'\leq s$, which shows $s=s'$.

Let $G=\langle x_1,\ldots,x_{r+s}\mid x_1^{m_1}=x_2^{m_2}=\ldots=x_s^{m_s}=1\rangle$ be the presentation of
$G$ that we have implicitly used to give coordinates to $\mathbb T_G$. Since $F_*$ is a surjection and the only
torsion elements of $G$ and $G_1$ are conjugation of elements in the finite group free factors,
Lemma~\ref{lemma:EpiImpliesIso} implies that, for some reordering of the $m_i'$, we can find a presentation
$\langle y_1,\ldots,y_{r+s} \mid y_1^{m_1'}=y_2^{m_2'}=\ldots=y_s^{m_s'}=1\rangle$ of $G_1$ such that
$F_*(x_j)=y_j$ for all $j=1,\ldots, r+s$. In particular, $m_i'\mid m_i$. We want to see that $m_i'=m_i$
for all $i=1,\ldots,s$. We argue by contradiction. Without loss of generality, we may assume that $m_1'<m_1$.
Hence, $F_{|}^*:\mathbb T_{G_1}\to \mathbb T_G$ takes a generator of the $\ZZ_{m_1'}$ factor of
$\mathbb T_{G_1}\cong \ZZ_{m_1'}\times\ldots\times \ZZ_{m_s'}\times (\CC^*)^r$ to an element of the subgroup
$\left\langle \frac{m_1}{m_1'}\right\rangle$ of the $\ZZ_{m_1}$ factor of
$\mathbb T_{G}\cong \ZZ_{m_1}\times\ldots\times \ZZ_{m_s}\times (\CC^*)^r$. In particular, $\lambda$ is
not in the image of $F_{|}^*$, which yields a contradiction. This concludes the proof
of~\ref{lemma:existencePencil-2} if $G$ is non-abelian.

The fact that $F_*$ is an isomorphism follows from Corollary~\ref{cor:Fsurjective-orb} and
Lemma~\ref{lemma:EpiImpliesIso}.
This concludes the proof of~\ref{lemma:existencePencil-3} if $G$ is non-abelian.

Let $B=\Sigma_0$ be the $n+1$ points of $S$ of label 0 in the orbifold structure $S_{(n+1,\bar m)}$.
Note that, by the maximality of the orbifold structure of $S_{(n+1,\bar m)}$ with respect to
$F_|:X\setminus D\to S_{(n+1,\bar m)}$, the extension
$F:X\dashrightarrow S$ satisfies that $\overline{F^{-1}(B)}=D_f\subset D$, hence $D=D_f\cup D_t$, where $D_t$
is the union of the irreducible components of $D$ which are not in $D_f$.  We can further decompose $D_t$ as
$D_v\cup D_h$, where $D_v$ is the union of the vertical components (irreducible components $C$ such that $F(C)$
is a point), and $D_h$ is the union of the horizontal components (irreducible components $C$ such that $F(C)=S$).
Note that we can choose a meridian around any irreducible component $C$ of $D_h$ which is contained in a generic
fiber of $F:X\setminus D\to S\setminus B$. Hence, by Corollary~\ref{cor:Fsurjective-orb}, any meridian about
any irreducible component of $D_h$ must be in the kernel
of $F_*$, thus it must be trivial as a consequence of part~\ref{lemma:existencePencil-3}.
Analogously, a meridian around an irreducible component of $D_v$ must be also in $\ker F_*$ since its image is a power
of a meridian around $P\in S\setminus B$ which is the boundary of a disk centered at $P$, and hence trivial.
This concludes~\ref{lemma:existencePencil-1}.
\end{proof}

\begin{thm}[Main theorem, $r=0$]
\label{thm:main2}
Theorem~\ref{thm:main:intro} holds if $r=0$ and $\pi_1(X\setminus D)$ is infinite. 
\end{thm}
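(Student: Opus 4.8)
The plan is to reduce to a finite covering with free fundamental group, apply Theorem~\ref{lemma:existencePencil} there, recognize the resulting admissible map as a (quasi-)Albanese morphism, and use functoriality of the Albanese to descend it. Since $G:=\pi_1(X\setminus D)$ is an infinite free product of cyclic groups, after discarding trivial factors we may assume $G\cong\ZZ_{m_1}*\cdots*\ZZ_{m_s}$ with $s\geq 2$ and all $m_i\geq 2$, so that $\chi^{\orb}(\PP^1_{(1,\bar m)})\leq 0$; set $m=\lcm(m_i)$ and let $\cm\colon G\to\ZZ_m$ be the projection onto the maximal cyclic quotient, so by Lemma~\ref{lemma:kernel} its kernel is free of rank $\rho=1-m\,\chi^{\orb}(\PP^1_{(1,\bar m)})\geq 1$. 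First I would introduce the connected degree-$m$ covering $p\colon\widetilde U\to U:=X\setminus D$ associated with $\ker\cm$: a smooth quasi-projective surface with $\pi_1(\widetilde U)\cong\FF_\rho$ carrying a free action of the deck group $\ZZ_m=G/\ker\cm$ with $\widetilde U/\ZZ_m=U$.

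Next I would produce an admissible map on $\widetilde U$ and identify it with an Albanese-type map. If $\rho=1$, Lemma~\ref{lemma:alb-admissible} directly gives that $\alpha_{\widetilde U}\colon\widetilde U\to\Alb(\widetilde U)\cong\CC^*$ is admissible, has no multiple fibres, and is an isomorphism on $\pi_1$. If $\rho\geq 2$, I would choose a smooth projective compactification $\widetilde X\supseteq\widetilde U$ with $\widetilde D:=\widetilde X\setminus\widetilde U$ a simple normal crossing divisor on which the $\ZZ_m$-action extends (normalize $X$ in $\CC(\widetilde U)$, then resolve and blow up equivariantly), apply Theorem~\ref{lemma:existencePencil} to $(\widetilde X,\widetilde D)$ --- valid since $\pi_1(\widetilde U)\cong\FF_\rho$, $\rho\geq 1$, with no torsion --- to obtain an admissible $\widetilde F\colon\widetilde X\dashrightarrow\widetilde S_0$ inducing a maximal orbifold morphism with no multiple fibres and $\widetilde F_*$ an isomorphism, and then use Lemma~\ref{lemma:albanese} to identify $\widetilde F$, up to isomorphism in the target, with $\alpha_{\widetilde U}$ (if $g_{\widetilde S_0}=0$) or $\alpha_{\widetilde X}\circ i_{\widetilde U}$ (if $g_{\widetilde S_0}\geq 1$) restricted to its image. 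Either way one gets an admissible map $\widetilde F\colon\widetilde U\twoheadrightarrow\widetilde S$ onto a non-compact smooth curve, an isomorphism on $\pi_1$, realized by a (quasi-)Albanese morphism.

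Then I would descend. By functoriality of the (quasi-)Albanese the $\ZZ_m$-action on $\widetilde U$ (and on $\widetilde X$) induces one on the ambient semiabelian (resp.\ abelian) variety, hence on $\widetilde S$, and $\widetilde F$ becomes $\ZZ_m$-equivariant; this action on $\widetilde S$ must be faithful, since otherwise its kernel would act freely on a generic fibre of $\widetilde F$, which by Corollary~\ref{cor:(0,1)} is isomorphic to $\A^1$ or $\PP^1$, and such curves admit no free nontrivial finite action. Taking $\ZZ_m$-quotients, $\widetilde F$ descends to a morphism $U=\widetilde U/\ZZ_m\to\widetilde S/\ZZ_m$ of smooth quasi-projective curves; extending it to a rational map onto the smooth projective model and passing to the Stein factorization yields an admissible rational map $F\colon X\dashrightarrow S$ with $F_|\colon U\twoheadrightarrow S\setminus\Sigma_0$, where $\Sigma_0:=S\setminus F_|(U)\neq\emptyset$ ($\widetilde S/\ZZ_m$ being non-compact). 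I would set $D_f:=\overline{F^{-1}(\Sigma_0)}\subseteq D$ and let $D_t$ be the union of the remaining components, so $D=D_f\cup D_t$.

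Finally I would verify \ref{lemma:existencePencil-2}, \ref{lemma:existencePencil-3}, \ref{lemma:existencePencil-1}. Taking $S_{(n+1,\bar m')}$ maximal with respect to $F_|$ (so $\#\Sigma_0=n+1$), Corollary~\ref{cor:Fsurjective-orb} on $X\setminus D_f$ together with the surjection $\pi_1(U)\twoheadrightarrow\pi_1(X\setminus D_f)$ from Lemma~\ref{lemma:meridians-generators} gives that $F_*\colon G\twoheadrightarrow H':=\pi_1^{\orb}(S_{(n+1,\bar m')})\cong\FF_{2g_S+n}*\ZZ_{m'_1}*\cdots*\ZZ_{m'_{s'}}$ is onto; abelianizing forces $2g_S+n=0$, hence $g_S=0$, $n=0$, $S=\PP^1$, consistent with $n=r-2g_S=0$. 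As $\widetilde S\to S$ is a connected $\ZZ_m$-Galois cover, $N':=\pi_1(\widetilde S)$ is normal in $H'$ of index $m$, and commutativity of the descent square at the level of (orbifold) $\pi_1$ identifies the restriction of $F_*$ to $\ker\cm=\pi_1(\widetilde U)$ with the isomorphism $\widetilde F_*\colon\pi_1(\widetilde U)\xrightarrow{\cong}N'$; a surjection carrying an index-$m$ normal subgroup isomorphically onto an index-$m$ normal subgroup (hence inducing an isomorphism of the quotients) is injective, so $F_*\colon G\xrightarrow{\cong}H'$. Uniqueness of the decomposition into freely indecomposable factors then gives $\{m'_j\}=\{m_i\}$ and $S_{(n+1,\bar m')}=S_{(1,\bar m)}$, proving \ref{lemma:existencePencil-2} and \ref{lemma:existencePencil-3} (alternatively, once $H'\cong G$, Lemma~\ref{lemma:EpiImpliesIso} applies); and \ref{lemma:existencePencil-1} follows exactly as in Theorem~\ref{lemma:existencePencil}, since a meridian about a horizontal component of $D_t$ can be taken inside a generic fibre and so lies in $\ker F_*=1$, while a meridian about a vertical component maps to a power of a meridian about a non-removed point of $S$, hence is trivial. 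I expect the descent --- the equivariant compactification and Stein factorization, the faithfulness of the deck action, and the matching of the ramification of $\widetilde S\to S$ with the orbifold data of $S$ and the free factors of $G$ --- to be the main obstacle; the rest only assembles results already established.
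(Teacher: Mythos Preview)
Your overall architecture is the same as the paper's: pass to the cyclic cover with free fundamental group, apply the $r\geq 1$ case upstairs, identify the resulting map with a (quasi-)Albanese morphism, and descend by functoriality. The paper also normalizes the quotient curve and Stein-factorizes, exactly as you do.

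The substantive difference is how you prove that $F_*$ is an isomorphism. The paper argues via Corollary~\ref{cor:Fsurjective-orb}: since the generic fiber of $\widetilde F$ has trivial image in $\pi_1(\widetilde U)$, and a generic fiber of $F$ is finitely covered by a generic fiber of $\widetilde F$, the image of $\iota_*\colon\pi_1(F^{-1}(P))\to\pi_1(X\setminus D)$ is a \emph{finite normal} subgroup of $\ZZ_{m_1}*\cdots*\ZZ_{m_s}$; by the Kurosh subgroup theorem this must be trivial, so $F_*$ is an isomorphism. This avoids any discussion of faithfulness or of matching ramification with orbifold multiplicities. Your index-matching argument is a legitimate alternative, and the group-theoretic step (a surjection carrying an index-$m$ normal subgroup isomorphically onto an index-$m$ normal subgroup is injective) is correct, as is the identification of the orbifold multiplicity of $F$ with the ramification index of $q$ once faithfulness is in hand.

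However, there is a genuine gap in your faithfulness step. You invoke Corollary~\ref{cor:(0,1)} to conclude that the generic fiber of $\widetilde F$ is $\A^1$ or $\PP^1$, but that corollary is stated under the hypotheses of Lemma~\ref{lemma:semidirect}, which require $B\supset B_{\widetilde F}$. In your situation $\widetilde F_|\colon\widetilde U\to\widetilde S$ need not have all atypical values removed, and in fact the generic fiber (which is the generic fiber of the extension $\widetilde X\setminus\widetilde D_f\to\widetilde S_0$ with the points of $\widetilde D_t$ deleted) may well fail to be simply connected even though its image in $\pi_1(\widetilde U)$ is trivial. Faithfulness can be salvaged by a different argument: if $\sigma\in\ZZ_m$ acts trivially on $\widetilde S$, then by equivariance of $\widetilde F_*$ a lift $\hat g\in G$ of $\sigma$ centralizes $\ker\cm\cong\FF_\rho$; one then checks (using that subgroups of $G$ are free products of cyclic groups) that the centralizer of $\FF_\rho$ in $G$ is trivial, forcing $\sigma=1$. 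But the Kurosh route in the paper sidesteps all of this and is considerably shorter.
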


\begin{proof}
Denote $G:=\pi_1(X\setminus D)=\ZZ_{m_1}*\ldots*\ZZ_{m_s}$, where $m_i>1$ for all $i\in I=\{1,\dots,s\}$ and $s\geq 2$.

Consider the unramified covering associated with the projection onto the maximal cyclic quotient $\nu:G\to \ZZ_{m}$,
where $m:=\lcm(m_i, i\in I)$. For any irreducible component $D_i$ of $D$, denote by $1\leq e_i\leq m$ the order of
$\nu(\gamma_i)\in\ZZ_m$, where $\gamma_i$ is any meridian around $D_i$.
By~\cite[Thm. 1.3.8.]{Namba-branched}, the cyclic unbranched covering $\theta:\tilde Y\to X\setminus D$
associated with $\nu:G\to \ZZ_{m}$ extends to a finite Galois branched cover $Y\to X$ which branches at each component $D_i$ with
branching number $e_i$. By~\cite[Theorem 1.1.7]{Namba-branched}, the deck action of $\ZZ_m$ on $\tilde Y$ extends to the
Galois action on $Y$, such that $Y/\ZZ_m\cong X$. Both of these actions are by algebraic automorphisms.

By Lemma~\ref{lemma:kernel}, the kernel $\ker \nu\cong\pi_1(\tilde Y)$ is the free group $\FF_{\rho}$, where
$\rho=1-m+m\sum_{i\in I}\left(1-\frac{1}{m_i}\right)\geq 1$.

Denote by $\bar Y$ a projective surface such that $\bar Y\setminus \bar D=U$, where $\bar D$ is a normal-crossing
divisor obtained by resolving the singularities of $D'$. We may assume that the action on $Y$ (which on $U$ is the action
by Deck transformations) extends to $\bar Y$. By Theorem~\ref{lemma:existencePencil} $U$ is induced by an admissible map
$f':U\to C'$ onto an (open) smooth curve $C'$ with no multiple fibers.
Moreover, by Lemma~\ref{lemma:albanese}\eqref{lemma:albanese-eq0} and \eqref{lemma:albanese-geq1},
up to an isomorphism of the curve, $f'$ is the restriction of the Albanese map to $U$ and its image.
By the functoriality of the Albanese map, the deck transformations can be carried over to $C'$.

Consider $\tilde\theta:C'\to C$ the quotient map by this action, where $C$ is an open curve (non necessarily smooth).
The map $f':\tilde Y\to C'$ hence descends to a morphism $f:X\setminus D\to C\subset \overline C$, where $\overline C$ is a
projective curve. Note that $C$ (and $\overline C$) may not be smooth, but, by the universal property of the
normalization, $f$ lifts to $\widetilde F:X\setminus D\to \widetilde S$, where $\widetilde S$ is the normalization
of $\overline C$ (a smooth projective curve). Applying Stein factorization, we know that $\widetilde F=\pi \circ F$,
where $F:X\setminus D\to S$ is admissible when restricted to its image, and $\pi:S\to \widetilde S$ is a finite morphism.

Since the normalization $\widetilde S\to \overline C$ is a birational equivalence, a generic fiber of $f$ is also a 
generic fiber of $\widetilde F$, which is a disjoint union of generic fibers of $F$. Moreover, since the generic 
fiber of $\widetilde\theta$ is finite, the preimage through $\theta$ of a generic fiber of $f$ is a disjoint union 
of generic fibers of $f'$. Restricting to connected components, one finds $P\in S$ and $P'\in C$ such that 
$\theta:(f')^{-1}(P')\to F^{-1}(P)$ is a finite covering map, where $(f')^{-1}(P')$ is a generic fiber of $f'$ 
and $F^{-1}(P)$ is a generic fiber of $F$.

Let us check that the morphism $F_*:\pi_1(X\setminus D)\to \pi_1^{\orb}(S)$ is an isomorphism, where $S$ is endowed
with the maximal orbifold structure with respect to $F:X\setminus D\to S$. By Corollary~\ref{cor:Fsurjective-orb}, 
this is equivalent to showing that the image of $\pi_1(F^{-1}(P))$  in $\pi_1(X\setminus D)$ is trivial. Since 
$f':U\to C'$ induces an isomorphism on fundamental groups, Corollary~\ref{cor:Fsurjective-orb} tells us that the 
inclusion $(f')^{-1}(P')\hookrightarrow U$ induces the trivial map on fundamental groups. Consider the commutative diagram
$$
\begin{tikzcd}
\pi_1((f')^{-1}(P))\arrow[r]\arrow[d, "\theta_*"] & \pi_1(U)\arrow[d, "\theta_*"]&\\
\pi_1(F^{-1}(P))\arrow[r, "i_*"]&\pi_1(X\setminus D)&\hspace*{-1cm}\cong\ZZ_{m_1}*\ldots*\ZZ_{m_s},
\end{tikzcd}
$$
where the horizontal arrows are induced by inclusion. Note that the arrow on the left is a
finite covering space, so its image is a finite index normal subgroup of $\pi_1(F^{-1}(P))$.
The commutativity of the diagram above implies that the morphism
$i_*$ factors through the quotient $\pi_1(F^{-1}(P))/\theta_*\left(\pi_1((f')^{-1}(P))\right)$.
Hence, the image of $i_*$ is a finite subgroup of $\ZZ_{m_1}*\ldots*\ZZ_{m_s}$. Moreover, by Corollary~\ref{cor:Fsurjective-orb}, this subgroup is
normal. By the Kurosh subgroup theorem, the only finite normal subgroup of $\ZZ_{m_1}*\ldots*\ZZ_{m_s}$ is the trivial
subgroup. Hence, $i_*$ is the trivial morphism, and thus $F_*:\pi_1(X\setminus D)\to \pi_1^{\orb}(S)$ is an isomorphism.

Note that if $S$ is a smooth projective curve such that $\pi_1^{\orb}(S_{(n,\bar m)})\cong\ZZ_{m_1}*\ldots*\ZZ_{m_s}$,
then we will see that $S=\PP^1$, and the image of $F$ must be $\PP^1$ with one point removed. Indeed, after
abelianizing the presentation of $\pi_1^{\orb}(S)$ from Section~\ref{sec:orbifold}, it follows that $S$ must have
genus $0$ (so $S=\PP^1$).
Moreover, $F(X\setminus D)$ is either $\PP^1$ or $\CC$. Let us see that it is indeed the latter. Suppose that
$F(X\setminus D)=\PP^1$, so none of the irreducible components of $D$ are fibers of $F:X\dashrightarrow \PP^1$. 
Then, as in Theorem~\ref{lemma:existencePencil}, the inclusion of $X\setminus D$ to $X$ induces an isomorphism in 
fundamental groups, and thus $\pi_1(X)$ is isomorphic to a non-trivial free product. This is impossible 
by~\cite{Gromov-fundamental}.

We have shown that, under the assumptions of this theorem, if $\pi_1(X\setminus D)\cong\ZZ_{m_1}*\ldots*\ZZ_{m_s}$, 
with $m_i,s\geq 2$ for all $i=1,\ldots, s$, then, there exists an admissible map $F:X\setminus D\to \PP^1\setminus{B}$ 
that induces an isomorphism $F_*:\pi_1(X\setminus D)\to\pi_1^{\orb}(\PP^1_{(1,\bar m)})$, where $\PP^1$ is endowed with 
the maximal orbifold structure with respect to $F:X\setminus D\to\PP^1$. Note that $\bar m=(m_1,\ldots,m_s)$, and those 
are the multiplicities of the multiple fibers. The remaining condition for $D=D_f\cup D_t$ can be proved as 
in~\ref{lemma:existencePencil}.
\end{proof}

\begin{rem}
The mere existence of zero-dimensional components of the characteristic varieties of 
$\pi_1(X\setminus D)$ is not enough to ensure the existence of the admissible map $F$.
For instance, in~\cite[Thm. 4.5]{ACM-multiple-fibers} an example of a quintic projective curve $D$
considered by Degtyarev is presented, where the characteristic varieties of $\pi_1(\PP^2\setminus D)$ 
are the primitive 10th roots of unity, but no admissible map $\PP^2\dasharrow\PP^1$ exists inducing
a surjection from $\pi_1(X\setminus D)$ onto $\pi_1^{\orb}(\PP^1_{1,\bar m})$.
\end{rem}

\begin{exam}
\label{exam:aguilar}
Examples of smooth quasi-projective surfaces having infinite fundamental groups of the form 
$\ZZ_{m_1}*\dots*\ZZ_{m_s}$ were found by Aguilar Aguilar in~\cite[Thm. 1.2]{Aguilar-fundamentalgroup}.
The author considers three concurrent lines $L+L_1+L_2$ in $\PP^2$ intersecting at $P$ and blows up $P$ and the 
infinitely near points $P_i$ on the strict transform $\tilde L_i$ of $L_i$, $i=1,2$. 
Hence $\pi^*(L_i)=E+2E_i+\tilde L_i$, where $E$ is the exceptional divisor of the first blow up and $E_i$ is 
the exceptional divisor that appears when blowing up the infinitely near points $P_i$. Denote by $\hat \PP^2$ the 
resulting surface after the three blow-ups. The quasi-projective surface 
$X=\hat \PP^2\setminus (\tilde L\cup\tilde L_1\cup \tilde L_2)$ has a well-defined morphism onto the orbifold 
$\PP^1_{(1,(2,2))}$ whose generic fiber is the strict transform of a generic line through $P$, which is a rational
smooth curve and hence simply connected. Hence $\pi_1(X)=\ZZ_2*\ZZ_2$. The author shared with us how his method can 
be generalized by considering $r+s+1$ concurrent lines and blowing up $m_i$ times at infinitely near points of $P$ 
in $L_i$, $i=1,\dots,s$ so that $\pi^*(L_i)=E+2E_{i,2}+\dots+m_iE_{i,m_i}+\tilde L_i$, to produce surfaces 
\begin{equation*}
X=\hat \PP^2\setminus \left(\bigcup_{i=1}^{r+s+1} \tilde L_{i}\cup
\bigcup_{i=1,\dots,s}
\bigcup_{k_i=1}^{m_i-1} E_{i,k_i}\right),
\end{equation*}
with $\pi_1(X)=\FF_r*\ZZ_{m_1}*\dots*\ZZ_{m_s}$ for any $r\geq 0$, $m_1,\ldots,m_s\geq 0$. In particular, every
finitely generated free product of cyclic groups can be realized as the fundamental group of a smooth
quasi-projective surface.
\end{exam}

We now make an observation about the type of free products of cyclic groups that can appear as 
fundamental groups of curve complements in \textit{simply connected} projective surfaces.

\begin{rem}
\label{rem:2fibers-bis}
Under the conditions of Theorem~\ref{thm:main:intro}, if $X$ is simply connected, 
then $s\leq 2$ (see Remark~\ref{rem:2fibers}) that is,
$\pi_1^{\orb}(\PP^1_{(r+1,\bar m)})\cong\FF_r*\ZZ_p*\ZZ_q$, where $p,q\geq 1$. 
Moreover, if $D$ satisfies Condition~\ref{eq:numequiv}, then $\gcd(p,q)=1$ by Lemma~\ref{lem:cyclichomology}.
\end{rem}

\begin{exam}
The simply-connectedness condition in Remark~\ref{rem:2fibers-bis} is important. We will see an example satisfying 
the hypotheses of Theorem~\ref{thm:main2} with a non simply-connected surface $X$, a curve $D$ on $X$ such that 
$\pi_1(X\setminus D)\cong \ZZ_3*\ZZ_3$, and a rational map realizing the isomorphism with the orbifold 
fundamental group of~$\PP^1_{(1,(3,3))}$. 
Let $X$ be the double cover of $\PP^2$ ramified along a generic sextic $C=\{f_2^3+f_3^2=0\}$, where $f_i$ is 
a homogeneous polynomial in three variables of degree~$i$. It is well known (cf.~\cite{Zariski-problem}) 
that $\pi_1(\PP^2\setminus C)=\ZZ_2*\ZZ_3$
(also as a consequence of Theorem~\ref{thm:Okapq}). A meridian around $C$ can be given 
as $xy$ where $x^2=y^3=1$. Using Lemma~\ref{lemma:kernel}
one can show $\pi_1(X\setminus D)\cong \ZZ_3*\ZZ_3$ generated by $y_0:=y$ and $y_1:=xyx$, 
where $D$ is the preimage of~$C$. Note that the six-fold cover ramifies fully along $C$ and thus its preimage is irreducible.
Since this cover factors through $X$, $D$ must also be irreducible and thus it satisfies Condition~\ref{eq:numequiv}.
The preimage of $(yx)^2=y_0y_1$ becomes a meridian of $D$ and by Lemma~\ref{lemma:meridians-generators} one has
$\pi_1(X)=\pi_1(X\setminus D)/\ncl(y_0y_1)=H_1(X;\ZZ)=\ZZ_3$. Theorem~\ref{thm:main2} ensures the existence 
of an admissible map $F:X\dashrightarrow \PP^1$ with two multiple fibers of multiplicity 3 and such that 
$D$ is the preimage of a point in~$\PP^1$.
\end{exam}

\begin{cor}\label{cor:rD}
Under the notation of Theorem~\ref{thm:main:intro}, suppose that an admissible map $F:X\dasharrow S$ satisfies conditions~\ref{lemma:existencePencil-2}--~\ref{lemma:existencePencil-1}, so in particular $\pi_1(X\setminus D)$ is isomorphic through the map induced by $F$ to an open orbifold group of $S$ (not necessarily infinite). If $H_1(X;\ZZ)$ 
is torsion and $D$ satisfies Condition~\ref{eq:numequiv}, then $S=\PP^1$ and $r+1$ 
is in fact the number of irreducible components of~$D$. In particular, $D=D_f$ is a fiber-type curve.
\end{cor}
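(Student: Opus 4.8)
The plan is to derive all three conclusions ($S=\PP^1$; $r+1$ equals the number of irreducible components of $D$; $D=D_f$) from a two‑way computation of $H_1(X\setminus D;\QQ)$, combined with the structural decomposition $D=D_f\cup D_t$ provided by part~\ref{lemma:existencePencil-1} of Theorem~\ref{thm:main:intro}.

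\emph{Step 1 ($S=\PP^1$).} Since $X$ is a compact manifold, $H_1(X;\ZZ)$ is finitely generated, so the hypothesis that it is torsion makes it finite; in particular $H_1(X;\CC)=0$. By definition of an admissible rational map, $F$ extends to an admissible morphism $F:X\setminus\mathcal B\to S$ on the complement of a finite base locus $\mathcal B$, and Remark~\ref{rem:h1-torsion} records that this induces an epimorphism $\pi_1(X)=\pi_1(X\setminus\mathcal B)\twoheadrightarrow\pi_1(S)$ and that $H_1(X;\CC)=0$ forces $S=\PP^1$. Hence $g_S=0$ and, by part~\ref{lemma:existencePencil-1}, $n=r-2g_S=r$.

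\emph{Step 2 (counting $H_1(X\setminus D;\QQ)$).} Let $N$ be the number of irreducible components of $D$. On one hand, since $H_1(X;\ZZ)$ is finite and $D$ satisfies Condition~\ref{eq:numequiv}, Proposition~\ref{prop:cyclichomology} gives $H_1(X\setminus D;\QQ)\cong\QQ^{\,N-1}$. On the other hand, by part~\ref{lemma:existencePencil-3} and Step~1,
\[
\pi_1(X\setminus D)\ \cong\ \pi_1^{\orb}\big(\PP^1_{(n+1,\bar m)}\big)\ \cong\ \FF_{n}*\ZZ_{m_1}*\dots*\ZZ_{m_s},
\]
whose abelianization, tensored with $\QQ$, is $\QQ^{\,n}$. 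Comparing the two expressions, $N-1=n$, so $D$ has exactly $n+1=r+1$ irreducible components.

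\emph{Step 3 ($D=D_f$).} By part~\ref{lemma:existencePencil-1}, $D_f=\overline{F^{-1}(\Sigma_0)}$ is the union of the fibers of $F$ over the $n+1$ distinct points of $\Sigma_0$. Since $F:X\setminus\mathcal B\to S$ is a surjective morphism from an irreducible surface onto a curve, each such fiber is a non‑empty curve (every component of a fiber has dimension exactly one), and the fibers over distinct points are disjoint in $X\setminus\mathcal B$, so their closures in $X$ have no common irreducible component. Hence $D_f$ has at least $n+1$ irreducible components; as $D_f\subseteq D$ and $D$ has exactly $n+1$ of them by Step~2, we conclude $D=D_f$ (so $D_t=\emptyset$) and each fiber over a point of $\Sigma_0$ is irreducible. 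In particular $D=D_f$ is a fiber‑type curve whose number of irreducible components is $r+1$, as claimed. I do not anticipate a genuine obstacle: the statement is essentially a bookkeeping consequence of Remark~\ref{rem:h1-torsion}, Proposition~\ref{prop:cyclichomology} and part~\ref{lemma:existencePencil-1}. The only point needing a little care is Step~3, namely that each fiber over a puncture of $S$ is an honest curve (not of lower dimension) and that distinct such fibers contribute distinct irreducible components — this is exactly what pins the component count of $D_f$ to $n+1$ and forces $D_t=\emptyset$.
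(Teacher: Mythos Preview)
Your proposal is correct and follows essentially the same approach as the paper, which simply states that the result follows from Remark~\ref{rem:h1-torsion} and Proposition~\ref{prop:cyclichomology}. Your Steps~1--3 are a faithful unpacking of this: Remark~\ref{rem:h1-torsion} gives $S=\PP^1$, Proposition~\ref{prop:cyclichomology} computes $H_1(X\setminus D;\QQ)$ in terms of the number of irreducible components, and comparing with the abelianization of $\pi_1^{\orb}(\PP^1_{(r+1,\bar m)})$ yields the component count and hence $D=D_f$.
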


\begin{proof}
The result follows from Remark~\ref{rem:h1-torsion} and Lemma~\ref{lem:cyclichomology}.
\end{proof}

The following example will exhibit the importance of Condition~\ref{eq:numequiv} to establish that $r+1$ is the 
number of irreducible components of~$D$.

\begin{exam}
Let $\cQ$ be a smooth conic in $\PP^2$ and $P\in\cQ$. Consider $\ell$ the tangent line to $\cQ$ through $P$ and 
$D$ a union of $\ell$ and $r$ lines through $P$. The quadric surface $X=\PP^1\times\PP^1$ can be defined as the 2:1 
cover $\sigma:X\to\PP^2$ ramified along $\cQ$. In particular, $X$ is simply connected. Also note that $\sigma^{-1}(D)$ 
is a union of $r+2$ irreducible components, namely $r$ curves of bidegree $(1,1)$ and the rulings 
$\sigma^{-1}(\ell)=\ell_1\cup\ell_2$,
for $\ell_1$ (resp. $\ell_2$) of bidegree $(1,0)$ (resp. $(0,1)$) all of them passing through $\sigma^{-1}(P)$.
The curve $\sigma^{-1}(D)$ has $r+2$ irreducible components and does not satisfy Condition~\ref{eq:numequiv}.
Note that $\pi_1(\PP^2\setminus D)=\FF_r$ and one can check that also $\pi_1(X\setminus \sigma^{-1}(D))=\FF_r$.
\end{exam}

Theorem~\ref{thm:main:intro} gives necessary geometric conditions for a quasi-projective surface $X\setminus D$ 
to have an infinite fundamental group which is a free product of cyclic groups. The curve $D$ need not be of 
fiber-type, but $D_f$ (which is a non-empty union of irreducible components of $D$) is a fiber-type curve coming from an admissible
map $F:X\dasharrow S$. The following result illustrates that
$X\setminus D_f$ behaves exactly like $X\setminus D$ in Theorem~\ref{thm:main:intro}.
 
\begin{cor}\label{cor:non-fiber-to-fiber}
Under the conditions of Theorem~\ref{thm:main:intro} and using the notation therein, the inclusion induces an 
isomorphism $\pi_1(X\setminus D)\cong\pi_1(X\setminus D_f)$, and
$F_*:\pi_1(X\setminus D_f)\to\pi_1^{\orb}(S_{(n+1,\bar m)})$ is an isomorphism.
\end{cor}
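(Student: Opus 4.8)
The plan is to deduce this directly from Theorem~\ref{thm:main:intro} together with the structure of meridians developed in Section~\ref{sec:fundamentalgroups}. By part~\ref{lemma:existencePencil-1} of Theorem~\ref{thm:main:intro}, we have $D = D_f \cup D_t$, where $D_f = \overline{F^{-1}(\Sigma_0)}$ and where every meridian of every irreducible component of $D_t$ is trivial in $\pi_1(X\setminus D)$. First I would observe that $X\setminus D = (X\setminus D_f)\setminus D_t$, and then apply Lemma~\ref{lemma:meridians-generators} iteratively (or in one step, taking $X_i$ to range over $X$ minus all the components of $D_t$): the kernel of the natural surjection $\pi_1(X\setminus D)\to\pi_1(X\setminus D_f)$ induced by the inclusion is the normal closure of the meridians around the components of $D_t$. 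Since all of these meridians are trivial in $\pi_1(X\setminus D)$, this kernel is trivial, so the inclusion-induced map $\pi_1(X\setminus D)\to\pi_1(X\setminus D_f)$ is an isomorphism.

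For the second assertion, I would note that $F$ restricts to an admissible map (indeed an orbifold morphism) $F_|: X\setminus D_f \to S_{(n+1,\bar m)}$ with the same maximal orbifold structure: the distinguished points $\Sigma_0$ and the multiple fibers (which lie over $S\setminus\Sigma_0$) are unchanged by removing $D_t$ instead of $D$, since $D_t$ consists of horizontal components and of vertical components contained in non-distinguished fibers, neither of which affects $\Sigma_0$ or the multiplicities. Then the square
\begin{equation*}
\begin{tikzcd}
\pi_1(X\setminus D)\arrow[r,"F_*"]\arrow[d,"\cong"'] & \pi_1^{\orb}(S_{(n+1,\bar m)})\arrow[d,equal]\\
\pi_1(X\setminus D_f)\arrow[r,"F_*"'] & \pi_1^{\orb}(S_{(n+1,\bar m)})
\end{tikzcd}
\end{equation*}
commutes by functoriality, the left vertical map is the isomorphism just established, and the top horizontal map is an isomorphism by part~\ref{lemma:existencePencil-3} of Theorem~\ref{thm:main:intro}. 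Hence the bottom map $F_*:\pi_1(X\setminus D_f)\to\pi_1^{\orb}(S_{(n+1,\bar m)})$ is an isomorphism as well.

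I do not expect a serious obstacle here; the only point requiring a little care is the bookkeeping that removing $D_t$ does not alter the maximal orbifold structure (so that the right-hand vertical map in the square is genuinely the identity), and the verification that Lemma~\ref{lemma:meridians-generators} applies with $X$ replaced by the smooth quasi-projective variety $X\setminus D_f$ rather than a projective surface — but the lemma is stated for an arbitrary smooth quasi-projective variety, so this is immediate. The essential content is entirely contained in Theorem~\ref{thm:main:intro}\ref{lemma:existencePencil-1}, which is precisely the triviality of the $D_t$-meridians.
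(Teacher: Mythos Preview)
Your argument for the first assertion is correct and matches the paper's exactly: Lemma~\ref{lemma:meridians-generators} plus the triviality of the $D_t$-meridians from Theorem~\ref{thm:main:intro}\ref{lemma:existencePencil-1}.

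For the second assertion, your commutative-square argument is sound \emph{once} you know that the maximal orbifold structure for $F_|:X\setminus D_f\to S\setminus\Sigma_0$ coincides with $S_{(n+1,\bar m)}$, but you treat this as mere ``bookkeeping'' and your justification is not quite right. You assert that vertical components of $D_t$ ``don't affect the multiplicities,'' but a priori they can: if $F^*(P)=2C_1+3C_2$ with $C_2\subset D_t$, then the fiber over $P$ has multiplicity $\gcd(2,3)=1$ in $X\setminus D_f$ but multiplicity $2$ in $X\setminus D$. So the maximal orbifold structure for $X\setminus D_f$ could in principle have strictly smaller labels $m'_P\mid m_P$, and then the bottom row of your square would land in $\pi_1^{\orb}(S_{(n+1,\bar m')})$ rather than $\pi_1^{\orb}(S_{(n+1,\bar m)})$.

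The paper closes this gap by applying Corollary~\ref{cor:Fsurjective-orb} to both $U=X\setminus(D_t\cup\mathcal B)$ and $U=X\setminus\mathcal B$: since the generic fiber of the former surjects onto that of the latter, and its image in $\pi_1(X\setminus D)\cong\pi_1(X\setminus D_f)$ is trivial, the exact sequence forces $F_*:\pi_1(X\setminus D_f)\to\pi_1^{\orb}(S_{(n+1,\bar m')})$ to be an isomorphism as well, and comparing the two isomorphisms yields $\bar m=\bar m'$. Alternatively, you could argue directly: for a vertical component $C\subset D_t$ over $P$ with multiplicity $a$ in $F^*(P)$, the meridian $\mu_C$ is trivial in $\pi_1(X\setminus D)$, and $F_*(\mu_C)=\mu_P^a$ in $\pi_1^{\orb}(S_{(n+1,\bar m)})$, so $m_P\mid a$; since this holds for every such component, $m_P$ divides every multiplicity in $F^*(P)$ and hence $m_P\mid m'_P$, giving $m_P=m'_P$. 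Either route completes your argument.
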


\begin{proof}
The isomorphism $\pi_1(X\setminus D)\cong\pi_1(X\setminus D_f)$ follows from Lemma~\ref{lemma:meridians-generators} and the 
fact that the meridians of $D_t$ are trivial in $\pi_1(X\setminus D)$. Using Corollary~\ref{cor:Fsurjective-orb} for 
$U=X\setminus (D_t\cup\mathcal B)$ and $U=X\setminus \mathcal B$, we see that $F_|:X\setminus D_f\to S\setminus \Sigma_0$ must also 
induce isomorphisms in (orbifold) fundamental groups, and that the maximal orbifold structure on $S$ with respect to $F_|:X\setminus D_f\to S\setminus \Sigma_0$ 
must coincide with the one with respect to $F_|:X\setminus D\to S\setminus \Sigma_0$.
\end{proof}

\subsection{Extensions of the main theorem to finite cyclic groups}\label{sec:main:extensions}
\mbox{}
Theorem~\ref{thm:main:intro} describes the geometry of a curve $D\subset X$ when 
$\pi_1(X\setminus D)$ is an \textit{infinite} group of the form $\FF_r*\ZZ_{m_1}*\ldots*\ZZ_{m_s}$ (i.e. non-abelian or $\ZZ$), where $X$ is a 
smooth projective surface. In this section, we give extra hypotheses under which similar results hold in 
the remaining abelian cases (the trivial group and finite cyclic groups).

\begin{prop}[Main theorem, trivial group case]\label{prop:maintrivial}
Let $D\subset X$ be a curve in a smooth projective surface $X$. Assume that $\pi_1(X\setminus D)$ is trivial. 
If $D$ is an ample divisor, then there exists an admissible map $F:X\dashrightarrow \PP^1$ as in 
Theorem~\ref{thm:main:intro} for $S_{(n+1,\bar m)}=\PP^1_{(1,-)}$ satisfying 
conditions~\ref{lemma:existencePencil-2}-\ref{lemma:existencePencil-1}.
\end{prop}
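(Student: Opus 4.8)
The plan is to build the admissible map from a pencil one of whose members is supported on $D$, and then use the triviality of $\pi_1(X\setminus D)$ to pin down its structure. First I would observe that the inclusion $X\setminus D\hookrightarrow X$ is $\pi_1$-surjective (iterate Lemma~\ref{lemma:meridians-generators} over the components of $D$), so $\pi_1(X)=1$ and in particular $H_1(X;\CC)=0$. Since $D$ is ample, $\dim H^0(X,\cO_X(mD))\to\infty$, so I fix $m$ with $h^0(X,\cO_X(mD))\geq 2$, take the section $s_0$ with $\textrm{div}(s_0)=mD$, pick $s_1$ not proportional to $s_0$, and form the non-constant pencil $\Phi=[s_0:s_1]\colon X\dashrightarrow\PP^1$, for which $\Phi^{-1}(P_\infty)=mD$ for a suitable $P_\infty\in\PP^1$ and whose base locus is a finite subset of $D$.

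Next I would resolve the indeterminacy of $\Phi$ by blow-ups $\pi\colon\tilde X\to X$ and Stein-factorize the resulting morphism $\tilde\Phi\colon\tilde X\to\PP^1$ as $\tilde\Phi=\psi\circ G$, with $G\colon\tilde X\to S$ proper with connected fibers and $\psi$ finite. As $\tilde X$ is a blow-up of the simply connected $X$, $H_1(\tilde X;\CC)=0$, so Remark~\ref{rem:h1-torsion} forces $S=\PP^1$, and transporting $G$ through $\pi$ over the locus where $\pi$ is an isomorphism yields an admissible rational map $F\colon X\dashrightarrow\PP^1$ with connected generic fibers whose resolution is $G$. The only non-exceptional component of $\tilde\Phi^{-1}(P_\infty)$ is the strict transform $\tilde D$ of $D$, and $\tilde D\subseteq\tilde\Phi^{-1}(P_\infty)=G^{-1}(\psi^{-1}(P_\infty))$ lies in a finite union of fibers of $G$; pushing down by $\pi$ (which contracts the exceptional locus to points) I conclude that, as a reduced curve, $D$ is a non-empty union of fibers of $F$, say $D=\bigcup_{Q\in\Sigma_0}\overline{F^{-1}(Q)}$ with $\Sigma_0\subset\PP^1$ finite.

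Now I would invoke the hypothesis. After removing $D$, the map $F$ restricts to a surjective orbifold morphism $F_|\colon X\setminus D\to\PP^1\setminus\Sigma_0$; with the maximal orbifold structure $\PP^1_{(\#\Sigma_0,\bar m)}$ on the target, Remark~\ref{rem:inducedorb} gives a surjection $1=\pi_1(X\setminus D)\twoheadrightarrow\pi_1^{\orb}(\PP^1_{(\#\Sigma_0,\bar m)})\cong\FF_{\#\Sigma_0-1}*\ZZ_{m_1}*\dots*\ZZ_{m_s}$. This forces $\#\Sigma_0=1$ and $s=0$: that is, $D=\overline{F^{-1}(Q_0)}$ is a single fiber and $F$ has no multiple fiber over $\PP^1\setminus\{Q_0\}=\CC$. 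Setting $S_{(n+1,\bar m)}=\PP^1_{(1,-)}$, $\Sigma_0=\{Q_0\}$, $n=0=r-2g_S$, $D_f=D$ and $D_t=\emptyset$ then establishes part~\ref{lemma:existencePencil-1}; the maximality of $\PP^1_{(1,-)}$ with respect to $F_|$ is part~\ref{lemma:existencePencil-2}; and $F_*$ is an isomorphism since $\pi_1(X\setminus D)$ and $\pi_1^{\orb}(\PP^1_{(1,-)})=\pi_1(\CC)$ both vanish, which is part~\ref{lemma:existencePencil-3}.

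I expect the step needing the most care to be the passage from the pencil $\Phi$ on $X$ to the admissible map $F$, i.e.\ checking that resolving base points and taking Stein factorization neither changes the target curve (controlled by $H_1(X;\CC)=0$ via Remark~\ref{rem:h1-torsion}) nor destroys the property that $D$ is swept out by fibers (controlled by the fact that the strict transform $\tilde D$ is exactly the non-exceptional part of the relevant fiber, hence contained in finitely many fibers downstairs). Ampleness of $D$ is used only to ensure that some multiple of $D$ moves in a pencil; the rest is forced by $\pi_1(X\setminus D)=1$, which simultaneously fixes the base curve as $\PP^1$ and excludes all extra distinguished fibers and all multiple fibers.
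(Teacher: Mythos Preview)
Your argument is correct and reaches the same endpoint as the paper, but by a different construction of the map. The paper exploits ampleness more directly: since $D$ is ample, $X\setminus D$ is affine and embeds in some $\CC^k$, and a \emph{generic linear projection} $\CC^k\to\CC$ restricts to a dominant morphism $F\colon X\setminus D\to\CC$ with connected generic fibers (Bertini). This $F$ extends to an admissible $F\colon X\dashrightarrow\PP^1$, and the surjection $1=\pi_1(X\setminus D)\twoheadrightarrow\pi_1^{\orb}(\PP^1_{(n+1,\bar m)})$ from Remark~\ref{rem:inducedorb} then forces $n=0$ and $\bar m$ trivial, exactly as in your last paragraph. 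Thus the paper bypasses both the resolution/Stein-factorization step and your analysis that $D$ is precisely a union of fibers of the resulting $F$: it simply takes $D_f=\overline{F^{-1}(\infty)}\subseteq D$ and lets $D_t$ be whatever is left, which is harmless since every meridian is trivial anyway. Your route, building $F$ from a pencil in $|mD|$, must argue (via the intersection form) that no fiber of the Stein factorization is entirely exceptional in order to get surjectivity downstairs and to see that $D$ is swept by fibers; this is more laborious but yields the sharper conclusion $D=D_f$, $D_t=\emptyset$. One small point of care: for the base locus of your pencil to be finite you need $V(s_1)$ to share no component with $D$, which requires choosing $s_1$ slightly more carefully than ``not proportional to $s_0$''; once $m$ is large enough that $|mD|$ is base-point free this is easy to arrange.
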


\begin{proof}
Since $D$ is ample, $D$  defines an embedding $X\setminus D\hookrightarrow\CC^k$ for some $k$. Projecting to a 
generic $1$-dimensional subspace inside $\CC^k$, we get a dominant morphism $F:X\setminus D\to \CC$ with connected 
fibers, which can be extended to a rational map $F:X\dashrightarrow \PP^1$. Using Remark~\ref{rem:inducedorb} we 
have that $\pi_1^{\orb}(\PP^1_{(n+1,\bar m)})$ is trivial, where $\PP^1_{(n+1,\bar m)}$ is the maximal orbifold 
structure with respect to $F:X\setminus D\to\CC$. This implies that $n=0$ (so in particular $F:X\setminus D\to\CC$ is surjective) 
and that $\bar m$ is the trivial orbifold structure.
\end{proof}

\begin{rem}
In order to clarify the hypothesis given in Proposition~\ref{prop:maintrivial} we will exhibit an example where 
the result does not follow when $\pi_1(X\setminus D)=1$ and $D$ is not an ample divisor.

Consider a line $L$ in a smooth cubic $X$. It is well known that $\pi_1(X)=\pi_1(X\setminus L)=1$ 
and $L^2=-1$. Thus $L$ cannot be the fiber of an admissible map $X\dashrightarrow\PP^1$.
\end{rem}

\begin{prop}[Main theorem, case $\ZZ_m$, $m> 1$]\label{prop:mainfinite}
Let $D\subset X$ be a curve in a smooth projective surface $X$. Assume that $\pi_1(X\setminus D)\cong{\ZZ_m}$, 
for $m>1$. If $X$ is simply connected, then
there exists an admissible map $F:X\dashrightarrow \PP^1$ as in Theorem~\ref{thm:main:intro}
for $S_{(n+1,\bar m)}=\PP^1_{(1,(m))}$ satisfying conditions~\ref{lemma:existencePencil-2}-\ref{lemma:existencePencil-1}. 
Moreover, $D=D_f$ is a fiber-type curve.
\end{prop}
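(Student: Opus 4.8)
The plan is to reduce to Theorem~\ref{thm:main2} by passing to a finite cyclic cover that trivializes the torsion. Since $X$ is simply connected and $\pi_1(X\setminus D)\cong\ZZ_m$ with $m>1$, the natural epimorphism $\nu:\pi_1(X\setminus D)\to\ZZ_m$ is an isomorphism, and it corresponds to an unramified cyclic $m:1$ cover $\theta:U\to X\setminus D$ with $\pi_1(U)=\ke\nu=1$; that is, $U$ is simply connected. As in the proof of Theorem~\ref{thm:main2}, I would use~\cite{Namba-branched} to extend $\theta$ to a branched cover $Y\to X$ ramified along (a multiple of) $D$, where the branching order along each component $D_i$ is $m/e_i$ with $e_i$ the order of $\nu(\gamma_i)$ for a meridian $\gamma_i$; since $X$ is simply connected, $\pi_1(X)=\pi_1(X\setminus D)/\ncl(\{\gamma_i\})$ is trivial, so the meridians normally generate $\ZZ_m$ and in particular at least one $e_i$ equals $m$, making the cover genuinely ramified. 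Resolving singularities of $Y$ (equivariantly for the deck action) produces a smooth projective surface $\bar Y$ with $\bar Y\setminus\bar D\cong U$ for a normal-crossing divisor $\bar D$.

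The difficulty is that $U$ is \emph{simply connected}, so Theorem~\ref{lemma:existencePencil} and Theorem~\ref{thm:main2} do not directly apply — there is no positive-dimensional characteristic variety and no infinite group structure to exploit. However, this also means Proposition~\ref{prop:maintrivial} is the relevant tool, \emph{provided} $\bar D$ (the ramification divisor, i.e. the exceptional locus of the resolution together with the strict transform of $D'$) can be arranged to be ample, or more precisely that one can find \emph{some} admissible map $f':U\to C'$ onto a smooth curve with simply connected (hence rational, $(0,1)$-type) generic fibers. Actually, since $\pi_1(U)=1$, I expect it is cleaner to argue: $D'=Y\cap\{v=0\}$ maps isomorphically to $D\subset X$, and the pullback of an ample divisor on $X$ to $\bar Y$ is big and nef with support containing $\bar D$; so $\bar D$ supports an effective ample-enough divisor and one gets an embedding of a neighborhood of $U$ analogous to the one in Proposition~\ref{prop:maintrivial}, yielding a dominant morphism $f':U\to\CC$ with connected fibers, which after enlargement gives an admissible map with no multiple fibers realizing $\pi_1(U)=1$. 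The main obstacle is making this ampleness/embedding argument rigorous when $D$ itself is not assumed ample — one must work with the pullback and the resolution, checking that the exceptional curves of the resolution can be blown down or absorbed so that the relevant divisor on $\bar Y$ becomes ample, or else invoke a Bertini-type argument for pencils.

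Once the admissible map $f':U\to C'$ is in hand, I would descend it exactly as in Theorem~\ref{thm:main2}: by Lemma~\ref{lemma:albanese} (the $g_S=0$ or $g_S\geq1$ cases, though here $C'$ is rational) the map $f'$ is, up to isomorphism of the target, the (quasi-)Albanese map, so the deck action of $\ZZ_m$ on $U$ carries over functorially to $C'$; taking the quotient $\tilde\theta:C'\to C$, normalizing, and applying Stein factorization produces an admissible map $F:X\setminus D\to S$ with $S$ a smooth projective curve. Abelianizing the presentation of an open orbifold group shows $S=\PP^1$, and since the generic fiber of $f'$ is simply connected and $\theta$ restricted to fibers is a finite cover, a Kurosh-subgroup-theorem argument (as in Theorem~\ref{thm:main2}) shows the image of $\pi_1(F^{-1}(P))$ in $\pi_1(X\setminus D)\cong\ZZ_m$ is a finite normal subgroup, but here I argue directly that $F_*$ is an epimorphism onto $\pi_1^{\orb}(\PP^1_{(1,(m))})\cong\ZZ_m$ (by Remark~\ref{rem:inducedorb} and connectedness of fibers) which by Lemma~\ref{lemma:EpiImpliesIso} and Corollary~\ref{cor:Fsurjective-orb} is then an isomorphism; the maximal orbifold structure must therefore be $\PP^1_{(1,(m))}$, so $F$ has exactly one multiple fiber of multiplicity $m$ and $F(X\setminus D)=\CC$. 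Finally, as in Theorem~\ref{lemma:existencePencil}\ref{lemma:existencePencil-1}, the meridians of horizontal and vertical non-fiber components lie in $\ke F_*=1$, and since $\pi_1(X\setminus D)\cong\ZZ_m$ is finite with $\#\Sigma_0=1$, Corollary~\ref{cor:rD} forces $D=D_f$ to be exactly the (single, when $r=0$) fiber over the removed point — so $D$ is a fiber-type curve, as claimed.
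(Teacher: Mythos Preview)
Your strategy has a genuine gap at the descent step. Once you pass to the cover $U$ with $\pi_1(U)=1$, the Albanese variety $\Alb(U)$ is a single point (the only semiabelian variety with trivial $H_1$), so the quasi-Albanese map $\alpha_U$ is constant. Lemma~\ref{lemma:albanese} explicitly requires $\pi_1(\hS)\cong\FF_r$ with $r\geq 1$, and Lemma~\ref{lemma:alb-admissible} requires $\pi_1(U)\cong\ZZ$; neither applies when $\pi_1(U)$ is trivial. Consequently your claim that ``$f'$ is, up to isomorphism of the target, the (quasi-)Albanese map'' is false here, and without that canonicity there is no reason the generic-projection map $f':U\to\CC$ produced by Proposition~\ref{prop:maintrivial} should be equivariant for the deck $\ZZ_m$-action. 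The map simply does not descend. (You also flag, correctly, that the ampleness hypothesis of Proposition~\ref{prop:maintrivial} on $\bar Y$ is unverified; but even granting it, the descent still fails.)

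The paper avoids the cover-and-descend approach entirely. It uses Namba's theorem in a more direct way: the existence of the branched cover forces a linear equivalence $E\sim mH$ \emph{on $X$ itself}, where $E=\sum(m/e_i)D_i$. This linear equivalence immediately furnishes a rational map $F:X\dashrightarrow\PP^1$ with $F^*([1:0])=E$ and $F^*([0:1])=mH$, so $D$ is the support of a fiber and there is an $m$-fold multiple fiber built in from the start. Stein factorization (using that $E$ is not a nontrivial multiple, which follows from $\gcd(m/e_i)=1$ since the meridians generate $\ZZ_m$) shows $F$ is already admissible, and the surjection $F_*:\ZZ_m\to\pi_1^{\orb}(\PP^1_{(1,(m))})\cong\ZZ_m$ is then an isomorphism by Hopficity. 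No cover, no Albanese, no descent is needed.
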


\begin{proof}
Let $D=\cup_{i=0}^r D_i$ be the decomposition of $D$ into irreducible components. Any meridian $\gamma_i$ around an 
irreducible component $D_i$ of $D$ has finite order $e_i$ dividing $m$. Consider the divisor $E=\sum \frac{m}{e_i}D_i$. Since $\pi_1(X)$ is 
the result of factoring out $\pi_1(X\setminus D)$ by the normal closure of the meridians around all the $D_i$'s, 
and $X$ is simply connected, note that $E$ is not a positive multiple of an effective divisor. 
By~\cite[Thm. 1.3.8.]{Namba-branched} the unbranched universal covering $\tilde Y\to X\setminus D$ 
associated with $G$ extends to a branched covering $Y\to X$. This implies there exists an effective divisor $H$ in $X$ 
such that $E\sim mH$. The linear equivalence provides a morphism $F:X\dashrightarrow \PP^1$ such that 
$F_|:X\setminus D\rightarrow \CC$ (so in particular $D$ is a fiber-type curve). After applying Stein factorization, we may assume that $F:X\dashrightarrow \PP^1$ is the 
composition of $\widetilde F:X\dashrightarrow\PP^1$ and $\beta:\PP^1\to\PP^1$, where $\widetilde F$ has connected fibers and 
$\beta$ is generically $k:1$. In principle, $D$ is a union of $n+1$ fibers of $\widetilde F$ above points in $\PP^1$, 
although it must be only one fiber, or else $\widetilde F_*:\pi_1(X\setminus D)\to\pi_1^{\orb}(\PP^1_{(n+1,\bar m)})$ would not
be surjective, which would contradict Remark~\ref{rem:inducedorb}. Now, $F^*([1:0])=E$ and $\beta^{-1}([1:0])$ is just a point, so $E$ must be $k$ times an effective 
divisor with support $D$, and hence $k=1$. Thus, $F:X\dashrightarrow \PP^1$ has connected fibers. We know that $F^*([0:1])=mH$, 
so $F$ induces a surjective morphism $\pi_1(X\setminus D)\to\pi_1^{\orb}(\PP^1_{(1,(m))})$. Since $\ZZ_m$ is finite, this
morphism is an isomorphism, and $\PP^1_{(1,(m))}$ is the maximal orbifold structure with respect to $F_|:X\setminus D\to\CC$.
\end{proof}

\begin{rem}
\label{rem:exception}
In order to clarify the simply-connected hypothesis given in Proposition~\ref{prop:mainfinite} we will exhibit an 
example where the result does not follow when $\pi_1(X)\neq 1$. Consider a surface $X\subset \PP^n$ with finite 
cyclic fundamental group $\ZZ_m$ and containing a line $L\subset X$. Take a hyperplane $H\subset \PP^n$ intersecting 
$X$ transversally such that $L\not\subset H$. Note that $D=H\cdot X$ defines a reduced irreducible divisor in $X$ and 
$L\cap D=L\cap H=\{P\}$. Hence, one can define a meridian $\gamma$ of $D$ around $P$ such that $\gamma\subset L$. 
Since $L$ is a rational curve, $\gamma$ is trivial in $X$. If there was a map $F:X\dashrightarrow S$ such that 
$D$ is a fiber of $F$, then $F_*:\pi_1(X\setminus D)\to \pi^{\orb}_1(S\setminus \{p\})$ would be surjective.
Since $\pi_1(X\setminus D)=\ZZ_m$, this implies $g_S=0$ and the orbifold structure of $S$ contains exactly one 
orbifold point of order $m>1$. However, in that case $F_*(\gamma)$ would have order $m$, which is a contradiction 
since we have proved that $\gamma$ is trivial.
\end{rem}

\subsection{Main Theorem for curves in $\PP^2$}\label{sec:main:P2}
\mbox{}
We will pay a special attention to the case $X=\PP^2$, in Theorem~\ref{thm:main:intro}.
Recall that every curve in $\PP^2$ satisfies Condition~\ref{eq:numequiv} and is ample. Hence, the extra hypotheses 
needed in the relevant results of Sections~\ref{sec:main:proof} and~\ref{sec:main:extensions} are always satisfied for curves in $\PP^2$, and
thus, the following stronger version of the Main Theorem~\ref{thm:main:intro} holds.

\begin{cor}[Main Theorem, curves in $\PP^2$]\label{thm:main:P2}
Let $D$ be a curve in $\PP^2$.
Suppose that $\pi_1(\PP^2\setminus D)$ is a free product of cyclic groups. 
Then, there exist $r\geq 0$ and $p\geq q\geq 1$ with $\gcd(p,q)=1$ such that $\pi_1(\PP^2\setminus D)\cong\FF_r*\ZZ_p*\ZZ_q$,
and an admissible map $F:\PP^2\dashrightarrow \PP^1$ such that:
\begin{enumerate}
\enet{\rm(\roman{enumi})}
\item \label{thm:main-simply-1}
$F$ induces an orbifold morphism
$$
F_|:\PP^2\setminus D\to \PP^1_{(r+1,\bar m)},
$$
where $\PP^1_{(r+1,\bar m)}$ is maximal with respect to $F_|$, and $\bar m=(p,q)$.
\item \label{thm:main-simply-2} 
$F_*:\pi_1(\PP^2\setminus D)\to \pi_1^{\orb}(\PP^1_{(r+1,\bar m)})$ is an isomorphism.
\item \label{thm:main-simply-3}
$D=\overline{F^{-1}(\Sigma_0)}$ is a fiber-type curve which is the union of $r+1$ irreducible fibers of~$F$.
\end{enumerate}
Moreover, after possibly a change of coordinates in $\PP^1$, one has $F=[f_{d_2}^{d_1}:f_{d_1}^{d_2}]$,
where $d_1\geq d_2\geq 1$ satisfy that $\gcd(d_1,d_2)=1$, $f_{d_i}$ is a homogeneous polynomial of degree $d_i$ for $i=1,2$ which is not a $k$-th power of another polynomial for any $k\geq 2$, $f_{d_1}$ and $f_{d_2}$ do not have any components in common, and $M_F\subset\{[0:1],[1:0]\}$. More concretely,
\begin{enumerate}
\item\label{thm:main-simply-4-1} If $p>q>1$, then $d_1=p$, $d_2=q$ and the pencil $F=[f_{q}^p:f_{p}^q]$ 
has exactly two multiple fibers corresponding to $[0:1],[1:0]\notin \Sigma_0$.
\item\label{thm:main-simply-4-2} 
If $p>q=1$, then $d_1=p$, $[1:0]\in \Sigma_0$, and the pencil $F=[f_{d_2}^p:f_p^{d_2}]$ 
has at least one multiple fiber corresponding to $[0:1]\notin \Sigma_0$.
\item\label{thm:main-simply-4-3} 
If $q=p=1$, then $F=[f_{d_2}^{d_1}:f_{d_1}^{d_2}]$ has at most two multiple 
fibers corresponding to $[0:1],[1:0]\in \Sigma_0$.
\end{enumerate}
\end{cor}

\begin{proof}
The existence of an admissible map $F:\PP^2\dasharrow \PP^1$ inducing an isomorphism 
$F_*:\pi_1(\PP^2\setminus D)\to\pi_1^{\orb}(\PP^1_{(r+1,\bar m)})$ follows from Theorem~\ref{thm:main:intro} and Remark~\ref{rem:h1-torsion}, as well as from Propositions~\ref{prop:maintrivial} and \ref{prop:mainfinite}. By Remark~\ref{rem:2fibers-bis}, 
$\pi_1(X\setminus D)\cong\FF_r*\ZZ_p*\ZZ_q$. This concludes the proof of parts~\ref{thm:main-simply-1} and \ref{thm:main-simply-2}. Part~\ref{thm:main-simply-3} follows from Corollary~\ref{cor:rD}.

By Remark~\ref{rem:2fibers}, after possibly a change of coordinates in $\PP^1$, we may assume $M_F\subset\{[0:1],[1:0]\}$. $F$ is of the form $F=[f_{k_2}^{d_1}:f_{k_1}^{d_2}]$ with  where $f_{k_i}$ is a homogeneous polynomial of degree $k_i$ such that $d_1k_2=d_2k_1$, $f_{k_1}$ and $f_{k_2}$ have no common factors, and $f_{k_i}$ is not a $l$-th power of another polynomial for any $l\geq 2$, for $i=1,2$.
The condition $\gcd(d_1,d_2)=1$ is necessary for the fibers of $F$ to be connected. In particular, there exists $l\in\ZZ_{>0}$ such that $k_i=d_il$ for $i=1,2$. The rest of the proof is straightforward 
using Remark~\ref{rem:coprime}, including the fact that $l=1$ (and hence $d_i=k_i$ for $i=1,2$) in each case \eqref{thm:main-simply-4-1}--\eqref{thm:main-simply-4-3}.
\end{proof}

\begin{rem}
In a series of papers, Eyral-Oka calculated the fundamental group of the
affine complement of a curve of type $g(x)=f(y)$ under certain conditions, as well as the fundamental group of the projective complement of the corresponding projectivized curve  (see~\cite{Oka-genericRjoin} for the case where the curve of that type is generic, see~\cite{Eyral-Oka-RjoinType} for the semi-generic case, and see~\cite{Eyral-Oka-RjoinTypeII} for curves satisfying a certain arithmetic condition).

Under their conditions, the fundamental group of the complement in $\PP^2$ of the projectivization of such a curve
is a free product of cyclic groups if $d=\deg(f)=\deg(g)$. Note that, in this case, the projectivized polynomials
$\overline g(x,z)$ and $\overline f(y,z)$ are of the form $f_q^{rp}$ and $f_p^{rq}$ for some $p$ and $q$ coprime
and $r\geq 1$,
where $f_p$ and $f_q$ are homogeneous polynomials of degree $p$ and $q$ respectively which are not a $k$-th power of any
other polynomial for $k\geq 2$ (in fact, $f_p$ and $f_q$ are products of powers of degree $1$ polynomials). Hence, in
that case, the projective curve $\overline g(x,z)=\overline f(y,z)$ is a union of $r$ fibers of an admissible map
$F=[f_p^q:f_q^p]:\PP^2\setminus \cB\to \PP^1$. Moreover, the fundamental group of its complement in $\mathbb{P}^2$ is
$\ZZ_p*\ZZ_q*\FF_{r-1}$. This agrees with Corollary~\ref{thm:main:P2}.
\end{rem}

\section{Addition-Deletion Lemmas}\label{sec:additiondeletion}
In Section~\ref{sec:main}, we have seen geometric conditions for a quasi-projective surface $X\setminus D$ to have a free product of cyclic groups as its fundamental group. Under the hypotheses  
and notation of Theorem~\ref{thm:main:intro}, $D=D_f\cup D_t$. 
Hence, if we let $U=X\setminus(D_t \cup \mathcal B)$, then $X\setminus D=U\setminus D_f$ is the complement 
of a fiber-type curve inside a smooth quasi-projective surface given by an admissible map $F:U\to S$ onto a smooth 
projective curve~$S$.

The purpose of this section is to prove addition-deletion results of fibers for complements of fiber-type curves 
inside smooth quasi-projective surfaces $U$ whose fundamental group is isomorphic by the map induced by $F$ to an open orbifold fundamental group of $S$. Before we do that, we need 
some technical results regarding presentations of fundamental groups of complements of fiber-type curves.

\subsection{Preparation Lemmas}
The results in this section give presentations of fundamental groups of fiber-type curve complements, but they 
do not make use of any Zariski-Van Kampen type computations. Instead, they follow from group-theoretical arguments and from
Lemma~\ref{lemma:semidirect}, Corollaries~\ref{cor:presentation} and~\ref{cor:Fsurjective-orb}, and the results in
Section~\ref{sec:fundamentalgroups}.

Assume $F:U\to S$ is an admissible map and consider $M_F\subset S$ (resp. $B_F$) the set of multiple (resp. atypical)
values of $F$ (see section~\ref{sec:admissible}). In Lemma~\ref{lemma:semidirect}, we gave a presentation of $\pi_1(U_B)$ in the
case where $B\supset B_F$. We now give a more explicit presentation for $\pi_1(U_B)$ in the general case when $B$ does
not necessarily contain~$B_F$.

Throughout this section, $F:U\to S$ will be an admissible map from a smooth quasi-projective surface $U$ to a smooth 
projective curve~$S$. We will use the notation introduced in Section~\ref{sec:pi1-fiber-type}.

\begin{lemma}\label{lem:presentationexplicit}
Assume that $F:U\to S$ is an admissible map. Let $B\subset S$, with $\# B\geq 1$.
Consider $\SF$ (resp. $\SB=\SB(B\cup B_F)$) an adapted geometric set of fiber (resp. base) loops
w.r.t. $F$ and $B$ (resp. $B\cup B_F$). Let $B'=M_F\setminus B$, and let $\SBp=\SB\setminus\{\gamma_k\mid P_k\in B_F\setminus (B\cup B')\}$.

Then, $\SBp$ is an adapted geometric set of base loops w.r.t. $F$ and $B\cup B'$, and $\pi_1(U_B)$ has a finite presentation with generators $\SF\cup\SBp$ and relations of the form 

\begin{enumerate}
\enet{\rm(R\arabic{enumi})}
 \item\label{R1}
 $[\gamma_{k},w]=1$, for any $w\in \SF$ and any $\gamma_k\in \SBp$ a lift of a meridian of $P_k\in B\setminus B_F$,
 \item\label{R2}
 $[\gamma_{k},w]=z_{k,w}$, for the remaining $\gamma_k\in \SBp$, for any $w\in \SF$, where $z_{k,w}$ is a word in~$\SF$,
 (that depends both on $\gamma_k$ and $w$),
 \item\label{R3}
 $y=1$ for a finite number of words $y$ in $\SF$,
 \item\label{R4} 
 $z_{k}=\gamma_{k}^{m_{k}}$, for any $\gamma_k\in \SBp$ a lift of a meridian of $P_k\in B'$, where $m_k$ is the
 multiplicity of the fiber $F^*(P_k)$, and $z_{k}$ is a word in~$\SF$,
\end{enumerate}
\end{lemma}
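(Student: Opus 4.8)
The strategy is to start from the presentation of $\pi_1(U_{B\cup B_F})$ given by Lemma~\ref{lemma:semidirect} and then successively kill meridians to descend to $\pi_1(U_B)$, reorganizing generators and relations along the way. First I would observe that $B\cup B_F = (B\cup B') \sqcup (B_F\setminus(B\cup B'))$, and that by construction $B'=M_F\setminus B$ consists precisely of the multiple-fiber values not already in $B$. By Lemma~\ref{lemma:semidirect} applied to the finite set $B\cup B_F$ (which contains $B_F$), the group $\pi_1(U_{B\cup B_F})$ has generators $\SF\cup\SB$ and relations~\eqref{eq:relations}: the commutators $[\gamma_k,a_i]=\alpha_{i,k}$, $[\gamma_k,b_i]=\beta_{i,k}$, the conjugation relations $[\gamma_k\delta_{j,k},x_j]=1$, and the single surface relation $\prod_j x_j=\prod_i[a_i,b_i]$. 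Each $\alpha_{i,k},\beta_{i,k},\delta_{j,k}$ is a word in $\SF$, so all these relations can be rewritten in the form ``$[\gamma_k,w]=$ (word in $\SF$)'' for $w\in\SF$, together with a fixed finite collection of relations ``$y=1$'' with $y$ a word in $\SF$ coming from normal-generation of $\pi_1(F^{-1}(P))$ once we eventually add meridians — more precisely these appear only at the descent step, so at this stage we just have the semidirect-product relations.

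Next I would invoke Corollary~\ref{cor:presentation}: $\pi_1(U_B)\cong \pi_1(U_{B\cup B_F})/N$, where $N$ is the normal closure of the meridians in $\pi_1(U_{B\cup B_F})$ of the components of $C_{B_F\setminus B}$. I want to further split this: $B_F\setminus B = B'\sqcup(B_F\setminus(B\cup B'))$, so $N$ is generated (as a normal subgroup) by the meridians of the fibers over $B'$ together with those over $B_F\setminus(B\cup B')$. Quotienting only by the latter gives $\pi_1(U_{B\cup B'})$, and I claim $\SBp=\SB\setminus\{\gamma_k\mid P_k\in B_F\setminus(B\cup B')\}$ is an adapted geometric set of base loops w.r.t. $F$ and $B\cup B'$: this is where I need to be a little careful, because removing some $\gamma_k$'s changes which loop plays the role of ``$\gamma_0$'' in condition~\eqref{dfn:adapted-3}, and I must check that the product defining $\tilde\gamma$ still maps to a meridian of the base point $P_0$ of $S\setminus(B\cup B')$ — this follows from the fact that in $\pi_1(S\setminus(B\cup B'))$ the meridians of the deleted points become trivial, so the corresponding relation in $\pi_1(S\setminus(B\cup B_F))$ collapses correctly. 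The conditions~\eqref{dfn:adapted-1},~\eqref{dfn:adapted-2},~\eqref{dfn:adapted-4} are inherited directly. For the multiple fibers over $B'$: since $P_k\in B'\subset M_F$, the fiber $F^*(P_k)$ has multiplicity $m_k>1$, and by the argument in the proof of Lemma~\ref{lemma:admissible} (Bézout on the multiplicities of components) there is a product of meridians $z_k$, a word in $\SF$, with $F_*(z_k)=F_*(\gamma_k)^{m_k}$; the relation $z_k=\gamma_k^{m_k}$ in $\pi_1(U_{B\cup B'})$ — equivalently, $\gamma_k^{m_k}$ lies in $\im(\iota_*)$ — is exactly what Corollary~\ref{cor:Fsurjective-orb} encodes, since the maximal orbifold structure on $S$ w.r.t. $F_|:U_{B}\to S\setminus B$ imposes $\mu_{P_k}^{m_k}=1$. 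These are relations~\eqref{R4}.

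Finally I would assemble the presentation. Starting from the Lemma~\ref{lemma:semidirect} presentation of $\pi_1(U_{B\cup B_F})$ on $\SF\cup\SB$, I delete the generators $\gamma_k$ with $P_k\in B_F\setminus(B\cup B')$ by Tietze transformations: each such $\gamma_k$ is a meridian being set to $1$ in $\pi_1(U_B)$ but not yet, so I first pass to $\pi_1(U_B)$, where killing the normal closure of these meridians both removes the generators $\gamma_k$ and turns the commutator relations $[\gamma_k,w]=z_{k,w}$ into relations ``$z_{k,w}=1$'', i.e. words in $\SF$ set to $1$ — these are absorbed into~\eqref{R3}, as are the pre-existing $\SF$-relations. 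The surviving commutator relations split into two types: for $\gamma_k$ a lift of a meridian of $P_k\in B\setminus B_F$ (a typical fiber over a deleted point), condition~\eqref{dfn:adapted-4} forces $\gamma_k$ to literally be a product of meridians of components of $C_{P_k}$, which commute with the fiber generators $\SF$ after an appropriate choice; this gives the trivial commutators~\eqref{R1}. For all remaining $\gamma_k$ — those over $B_F\setminus B'$ that are atypical but not deleted, equivalently over $B\cap B_F$ — the commutators are genuine words $z_{k,w}$ in $\SF$, giving~\eqref{R2}. Together with~\eqref{R4} for $B'$ and the surface relation (which is a word in $\SF$, hence folded into~\eqref{R3}), this is exactly the asserted presentation, and finiteness is clear since $\SF$, $\SBp$, $B$, $B'$, $B_F$ are all finite. \emph{The main obstacle} I anticipate is the bookkeeping in showing $\SBp$ is genuinely adapted w.r.t. $B\cup B'$ — in particular verifying condition~\eqref{dfn:adapted-3} survives the deletion of base loops and that the $z_k$ for $P_k\in B'$ can be taken to be words purely in $\SF$ (not involving the remaining $\gamma$'s), which rests on a careful reading of the Bézout argument in Lemma~\ref{lemma:admissible} combined with Corollary~\ref{cor:Fsurjective-orb}.
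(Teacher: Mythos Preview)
Your approach is the same as the paper's: start from the Lemma~\ref{lemma:semidirect} presentation of $\pi_1(U_{B\cup B_F})$, descend via Corollary~\ref{cor:presentation}, and sort the resulting relations into types \ref{R1}--\ref{R4}. Two points in your bookkeeping need fixing.

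First, your justification of \ref{R1} is not the right mechanism. Condition~\eqref{dfn:adapted-4} only says that $\gamma_k$ is a product of meridians of components of $C_{P_k}$; this alone does not force commutativity with $\SF$. The correct reason is simply that $P_k\in B\setminus B_F$ is a \emph{typical} value, so the monodromy of the locally trivial fibration $U_{B\cup B_F}\to S\setminus(B\cup B_F)$ along $F_*(\gamma_k)$ is trivial, and hence conjugation by $\gamma_k$ fixes every element of $\iota_*\pi_1(F^{-1}(P))$.

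Second, when you kill meridians over $B_F\setminus B$ you are omitting a source of \ref{R3} relations. For $P_k\in B_F\setminus(B\cup B')$, each meridian of a component of $C_{P_k}$ has the form $w\gamma_k^m$ with $w$ a word in $\SF$ (this is the B\'ezout argument of Lemma~\ref{lemma:admissible} combined with Corollary~\ref{cor:Fsurjective-orb} identifying $\ker F_*$ with the image of the fiber group). Setting all of these to $1$ gives both $\gamma_k=1$ \emph{and} the relations $w=1$; the collapsed commutators $z_{k,w}=1$ are not enough by themselves. The same mechanism over $P_k\in B'$ produces further \ref{R3} relations in addition to~\ref{R4}. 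Finally, note that the monodromy relations of type~\ref{R2} apply to all remaining $\gamma_k$ with $P_k\in(B\cap B_F)\cup B'$, not only those over $B\cap B_F$.
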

\begin{proof}
Note that $B\cup B_F$ satisfies the conditions of Lemma~\ref{lemma:semidirect}, and hence $\pi_1(U_{B\cup B_F})$ admits
a presentation generated by $\SF\cup\SB$, where $\SF$ (resp. $\SB$) is an adapted geometric set of fiber (resp. base) loops w.r.t. $F$ and $B\cup B_F$. Note that $\SF$ is an adapted geometric set of fiber loops w.r.t. $F$ and $B\cup B_F$ if and only if the same holds w.r.t. $B$.

By Corollary~\ref{cor:presentation} a presentation of $\pi_1(U_B)$ can be obtained by factoring $\pi_1(U_{B\cup B_F})$ 
by the normal closure of all the meridians about irreducible components of fibers above points in $B_F\setminus B$. 
We abuse notation and see both the elements of $\SF$ and $\SB$ as elements of $\pi_1(U_{B})$, $\pi_1(U_{B\cup B'})$ 
or $\pi_1(U_{B\cup B_F})$ when no ambiguity seems likely to arise.

Let $\gamma_k$ be such that $F_*(\gamma_k)$ is a meridian around a point $P_k\in B_F\setminus (B\cup B')$. Since $F^*(P_k)$ is not a multiple fiber, $\gamma_k$ is trivial in $\pi_1(U_{B})$ by condition~\eqref{dfn:adapted-4} in Definition~\ref{dfn:adapted}. In particular, 
this proves that $\pi_1(U_B)$ can be generated by~$\SF\cup \SBp$. Since $\SB$ is an adapted geometric set of base loops w.r.t $F$ and $B\cup B_F$, then $\SBp$ is an adapted geometric set of base loops w.r.t $F$ and $B\cup B'$.
As in the proof of Lemma~\ref{lemma:admissible}, there exists a meridian in $\pi_1(U_{B \cup B_F})$ about any 
given irreducible component of $C_{P_k}$ of the form $w\gamma_k^{m}$ for some $m\geq 1$, and for $w\in\ker \left(F_{|U_{B \cup B_F}}\right)_*$, which is a word in $\SF$ by Corollary~\ref{cor:Fsurjective-orb}. 
Hence, we have shown that the normal closure of the subgroup generated by the meridians about each irreducible 
component of $C_{P_k}$ is the normal closure of a subgroup of $\pi_1(U_{B \cup B_F})$ generated by $\gamma_k$ and a 
finite number of words in $\SF$. This gives rise to relations of the form~\ref{R3}.

Let $\gamma_k$ be such that $F_*(\gamma_k)$ is a meridian around a point in $B'$.
Similarly as in the previous paragraph, we can choose a meridian in  $\pi_1(U_{B \cup B_F})$ about any of the irreducible components of $C_{P_k}$ which is of the 
form $w(\gamma_k^{m_k})^m$ for some $m\geq 1$, and $w$ a word in $\SF$. Let $N_k$ be the normal closure of the 
subgroup generated by such choice of meridians about each of the irreducible components of $C_{P_k}$. Note that, by 
definition of $\gamma_k\in\pi_1(U_{B\cup B_F})$ and Corollary~\ref{cor:Fsurjective-orb}, the equality $\gamma_k^{m_k}=z_k$ holds in $\pi_1(U_{B})$ for some 
word $z_k$ in $\SF$ (an element of $\ker F_*$).  
Hence, the relations given by $w(\gamma_k^{m_k})^m=1$ coming from the 
chosen generators of $N_k$ (as a normal closure) together with the relation $\gamma_k^{m_k}=z_k$ (which we know holds in 
$\pi_1(U_B)$) are equivalent to a finite number of relations of the form $y=1$ for words $y$ in the letters of $\SF$ (type~\ref{R3}) and 
the relation $\gamma_k^{m_k}=z_k$ (all the relations of type~\ref{R4}).

Let $P_k\in B\setminus B_F$, hence $F_*(\gamma_{k})\in\pi_1(\PP^1\setminus(B\cup B_F))$ induces the trivial monodromy morphism in 
the elements of $\pi_1(F^{-1}(P))$, since $P_k\notin B_F$. In other words, $\gamma_{k}\in \pi_1(U_{B})$ commutes with any word in~$\SF$.

The result now follows from adding the relations found in the previous paragraphs to the presentation of $\pi_1(U_{B\cup B_F})$ 
given in Lemma~\ref{lemma:semidirect}: the relation $\prod_j x_j=\prod_i[a_i,b_i]$ is of type~\ref{R3}, the monodromy relations of $\pi_1(U_{B\cup B_F})$ corresponding to $P_k$ are the ones of 
type~\ref{R2} if $P_k\in (B_F\cap B)\cup B')$, of type~\ref{R1} if $P_k\in B\setminus B_F$ (by the previous paragraph), 
or become of type~\ref{R3} after using that $\gamma_k$ is trivial in $\pi_1(U_B)$ if $P_k\in B_F\setminus(B\cup B')$.
\end{proof}

Our next goal is to describe cases in which $\pi_1(U_B)$ has a presentation on generators $\SBp$. This will provide candidates 
for $B\subset S$ such that $F_*:\pi_1(U_B)\to\pi_1^{\orb}(S_{(n+1,\bar m)})$ is an isomorphism (and, in particular, such that 
Our next goal is to describe cases in which $\pi_1(U_B)$ has a presentation on generators $\SBp$. This will provide candidates
for $B\subset S$ such that $F_*:\pi_1(U_B)\to\pi_1^{\orb}(S_{(n+1,\bar m)})$ is an isomorphism (and, in particular, such that
$\pi_1(U_B)$ is a free product of cyclic groups). This goal is achieved in Proposition~\ref{prop:presentationgamma} with the help of
Lemma~\ref{lemma:KequalsN}. We follow notation from Section~\ref{sec:pi1-fiber-type} and Lemma~\ref{lem:presentationexplicit}.

\begin{lemma}\label{lemma:KequalsN}
Let $F:U\to S$ be an admissible map, $B=\{P_0,\dots,P_n\}\subset S$ a non-empty set, and assume 
$Q\in B\setminus \left(B_F\cap B\right)$. Let $S_{(n+1,\bar m)}$ (resp. $S_{(n,\bar m)}$) be the maximal orbifold 
structure of $S$ w.r.t. $F_|:U_{B}\to S\setminus B$ (resp. $F_|:U_{B\setminus\{Q\}}\to S\setminus (B\setminus\{Q\})$).
Consider $K:=\ker F_*$ the kernel of $F_*:\pi_1(U_{B})\to\pi_1^{\orb}(S_{(n+1,\bar m)})$.

Moreover, assume that $F_*:\pi_1(U_{B\setminus\{Q\}})\to\pi_1^{\orb}(S_{(n,\bar m)})$ is an isomorphism, and furthermore, either
\begin{itemize}
\item $n\geq 1$ or
\item $n=0$ and $B'\neq \emptyset$, where $B':=M_F\setminus B\neq \emptyset$.
\end{itemize}
Then $K$ is an abelian group. Furthermore, an adapted geometric set of base loops $\SBp=\SB(B\cup B')$ w.r.t $F$ and $B\cup B'$ can be chosen so that~$K=N$, where $N$ is the normal closure of the subgroup
$\langle \gamma_k^{m_k}\mid P_k\in B'\rangle$.
\end{lemma}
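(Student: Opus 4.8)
The plan is to start from the explicit presentation of $\pi_1(U_B)$ provided by Lemma~\ref{lem:presentationexplicit}, using the set $B'=M_F\setminus B$ and the adapted geometric set of base loops $\SBp=\SB(B\cup B')$. In that presentation the generators split as $\SF\cup\SBp$, and $K=\ker F_*$ is generated (as a normal subgroup) by the image of $\pi_1(F^{-1}(P))$, i.e. by the words in $\SF$, together with the relations of type~\ref{R4}, $\gamma_k^{m_k}=z_k$ with $z_k$ a word in $\SF$. The key observation is that the hypothesis $F_*:\pi_1(U_{B\setminus\{Q\}})\to\pi_1^{\orb}(S_{(n,\bar m)})$ is an isomorphism forces the image of $\pi_1(F^{-1}(P))$ inside $\pi_1(U_{B\setminus\{Q\}})$ to be trivial, by Corollary~\ref{cor:Fsurjective-orb}. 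First I would use Corollary~\ref{cor:presentation} (applied to the inclusion $U_B\hookrightarrow U_{B\setminus\{Q\}}$, which kills a meridian $\gamma_Q$ of the fiber over $Q$) to deduce that $\pi_1(U_B)$ is obtained from $\pi_1(U_{B\setminus\{Q\}})$ by adding back $\gamma_Q$ as a free generator lifting a meridian of $Q$; concretely $\pi_1(U_B)\cong \pi_1(U_{B\setminus\{Q\}})\rtimes \langle\gamma_Q\rangle$ or, when $Q\notin B_F$, simply a direct-type extension. Since $Q\notin B_F\cap B$, the fiber over $Q$ is not atypical \emph{in the relevant sense}, so conjugation by $\gamma_Q$ acts trivially on $\pi_1(F^{-1}(P))$-words, and the image of $\pi_1(F^{-1}(P))$ in $\pi_1(U_B)$ is still trivial.

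The heart of the argument is then to show $K$ is abelian. With the image of $\pi_1(F^{-1}(P))$ trivial in $\pi_1(U_B)$, every word in $\SF$ is trivial, so the relations~\ref{R4} become $\gamma_k^{m_k}=1$ for $P_k\in B'$. Feeding this back into the presentation of Lemma~\ref{lem:presentationexplicit}, all of~\ref{R1}, \ref{R2}, \ref{R3} collapse, and we get $\pi_1(U_B)\cong \pi_1^{\orb}(S_{(n+1,\bar m)})$ \emph{together with} one possibly-extra free generator only if $B'\neq\emptyset$ contributed nothing new — but more carefully, comparing with $\pi_1^{\orb}(S_{(n+1,\bar m)})$ which by construction has presentation on the $F_*(\gamma_k)$'s with relations $F_*(\gamma_k)^{m_k}=1$ for the multiple-fiber generators, we see $F_*$ admits a section on generators and is therefore an isomorphism onto $\pi_1^{\orb}(S_{(n+1,\bar m)})$, whence $K=1$, which is trivially abelian. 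The condition ``$n\geq 1$, or $n=0$ and $B'\neq\emptyset$'' is precisely what guarantees the target orbifold group is an open orbifold group (so that $\pi_1(S\setminus(B\cup B'))$ is free and the splitting exists), and also what ensures $\gamma_Q$ genuinely lives over a deleted point rather than over a multiple-fiber point that has to be capped.

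Finally, for the statement $K=N$: once $K$ is shown to be generated by the (now trivial) $\SF$-words and the elements $\gamma_k^{m_k}$ for $P_k\in B'$, normally, we have $K=\langle\!\langle \{\gamma_k^{m_k}\mid P_k\in B'\}\rangle\!\rangle = N$ by definition. The only subtlety is that the specific lifts $\gamma_k$ used in the presentation must be the ones from a \emph{single} adapted geometric set $\SBp=\SB(B\cup B')$; since the construction in Lemma~\ref{lemma:admissible} allows us to fix such a set from the start, and Corollary~\ref{cor:presentation} identifies generators compatibly, this is automatic. \textbf{The main obstacle} I anticipate is the bookkeeping needed to verify that the isomorphism hypothesis for $B\setminus\{Q\}$ really does kill \emph{all} of $\pi_1(F^{-1}(P))$ inside $\pi_1(U_B)$ and not merely inside $\pi_1(U_{B\setminus\{Q\}})$ — i.e. checking that adding back the generator $\gamma_Q$ (which may lie over an atypical value $Q\in B_F$, as long as $Q\notin B_F\cap B$ is vacuously impossible... so in fact $Q$ could be anything in $B$, including $B_F$) does not reintroduce nontrivial monodromy on fiber loops; this is handled by the type~\ref{R1}/\ref{R2} analysis of Lemma~\ref{lem:presentationexplicit} together with the fact that a meridian of $Q$ can be taken inside a generic fiber, so its conjugation action is controlled, but it requires care to state cleanly.
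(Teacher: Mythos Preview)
Your proposal contains a fundamental error that invalidates the argument. You claim that because the image of $\pi_1(F^{-1}(P))$ is trivial in $\pi_1(U_{B\setminus\{Q\}})$, it is also trivial in $\pi_1(U_B)$, and hence $K=1$. But the natural map goes the \emph{other} way: by Lemma~\ref{lemma:meridians-generators}/Corollary~\ref{cor:presentation}, the inclusion induces a surjection $\pi_1(U_B)\twoheadrightarrow\pi_1(U_{B\setminus\{Q\}})$ whose kernel is the normal closure of a meridian of $C_Q$. Triviality in the quotient says nothing about triviality upstairs. Your proposed decomposition $\pi_1(U_B)\cong\pi_1(U_{B\setminus\{Q\}})\rtimes\langle\gamma_Q\rangle$ is unjustified and in general false. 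In fact $K$ is typically \emph{not} trivial: the whole content of the lemma is that $K$ equals the (generally nontrivial) normal closure $N$ of the $\gamma_k^{m_k}$, $P_k\in B'$. If $K$ were always trivial the statement would be vacuous. You are also confused about the hypothesis on $Q$: $Q\in B\setminus(B_F\cap B)$ with $Q\in B$ means precisely $Q\notin B_F$, so $Q$ is a \emph{typical} value; it cannot lie in $B_F$.

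The paper's argument is quite different and worth internalizing. It first observes $N\leq K$ trivially. For the reverse direction, it fixes a meridian $\gamma$ about the typical fiber $C_Q$ and shows (a) $K\leq\ncl(\gamma)$, since any $w\in K$ dies in $\pi_1(U_{B\setminus\{Q\}})$ by the isomorphism hypothesis and Corollary~\ref{cor:Fsurjective-orb}; and (b) $\ncl(\gamma)$ centralizes $K$, because $Q\notin B_F$ gives relations of type~\ref{R1}, i.e.\ $\gamma$ commutes with every fiber loop, and $K$ is generated by fiber loops. Combining (a) and (b) yields $K$ abelian --- but not trivial. The cases $n\geq 1$ versus $n=0$, $B'\neq\emptyset$ are then used to write $\gamma$ itself as a word in $\SBp$ (after adjusting the adapted set if $n=0$). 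Finally, one introduces the homomorphism $\varphi:\FF_{r+b'}\to\pi_1(U_B)$, $\delta_i\mapsto\gamma_i$, shows $\ncl(\gamma)=\varphi(\ncl(a))$ and $N=\varphi(\ncl(\langle\delta_k^{m_k}\rangle))$ using that $\gamma$ commutes with $K$ (resp.\ $K$ is abelian), and concludes $K=N$ by observing that $F_*\circ\varphi$ descends to an isomorphism $\FF_{r+b'}/\ncl(\langle\delta_k^{m_k}\rangle)\to\pi_1^{\orb}(S_{(n+1,\bar m)})$, forcing $\ker(F_*)\cap\im(\varphi)/N$ to be trivial.
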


\begin{proof}
We use the presentation of $\pi_1(U_B)$ given in Lemma~\ref{lem:presentationexplicit} and the notation therein. Since $\gamma_k^{m_k}\in K$ for all $P_k\in B'$ and $K$ is normal, $N\leq K$ for every choice of $\SBp$ as in Lemma~\ref{lem:presentationexplicit}.

Assume now that $F_*$ is an isomorphism and either $n\geq 1$ or $n=0$ and $B'\neq \emptyset$. 
We will show that $K$ is abelian and $K\leq N$ for some choice of $\SBp$. 
Let $\gamma\in\pi_1(U_B)$ be any positively oriented meridian about the 
irreducible fiber $C_Q$, such that $F_*(\gamma)$ is a positively oriented meridian
around $Q$ in $S\setminus (B\cup B_F)$. Note the following:
\begin{enumerate}
\item Since $C_Q$ is irreducible, $\ncl(\gamma)$ is independent of the choice of the 
meridian $\gamma$ by Lemma~\ref{lemma:meridians-conjugated}.
\item \label{stepcommute} 
$K\unlhd \ncl(\gamma)\unlhd \pi_1(U_{B})$ is a subgroup of the normal closure of $\langle \gamma\rangle$
in $\pi_1(U_{B})$. To see this consider $w\in K$. The projection of $w$ in $\pi_1(U_{B\setminus\{Q\}})$ 
is trivial by Corollary~\ref{cor:Fsurjective-orb}, so $w$ is an element of $\ncl(\gamma)$. 
\item \label{stepabelian}
$K\unlhd \pi_1(U_{B})$ is abelian. To see this note that any meridian around the typical fiber $C_Q$ commutes 
with $K=\iota_*(F^{-1}(P))$ by Lemma~\ref{lem:presentationexplicit}\ref{R1}. 
By Lemma~\ref{lemma:meridians-generators} this means $\ncl(\gamma)$ is contained in the centralizer of $K$. 
By~\eqref{stepcommute} $K\leq \ncl(\gamma)$, in particular, $K$ is an abelian subgroup.
\item \label{steprewrite} 
There exists an adapted geometric set of base loops $\SBp$ w.r.t. $F$ and $B\cup B'$ such that one such positively oriented meridian $\gamma$ can be written as a word in $\SBp$.

If $n\geq 1$, we may assume $Q=P_1$ and $\gamma=\gamma_1$ in $\SBp$, which concludes~\eqref{steprewrite} in the case $n\geq 1$.

Suppose that $n=0$ and $b'=\# B' \geq 1$. In this case, $r=2g_S$. Let $\SBp=\SB(B\cup B')$ be as in 
Lemma~\ref{lem:presentationexplicit}. By definition of $\SBp$,  
if $\tilde\gamma=\prod_{i=1}^{g_S}[\gamma_{b'+2i-1},\gamma_{b'+2i}]\cdot\left(\prod_{P_k\in B'}\gamma_k\right)^{-1}$
(see condition~\eqref{dfn:adapted-3} in Definition~\eqref{dfn:adapted}), then $F_*\left(\tilde\gamma\right)$ is a meridian around $Q$ in 
$\pi_1(S\setminus(B\cup B'))$. By Corollary~\ref{cor:Fsurjective-orb} there exists a word $z$ in the letters $\SF$ 
such that $\gamma=\tilde\gamma z$ is a meridian around $C_Q$ whose image by $F_*$ is a meridian around $Q$.
After replacing $\gamma_1$ by $z^{-1}\gamma_1$ in $\SBp$ one can assume $\gamma=\tilde\gamma$.
This concludes~\eqref{steprewrite} in the case $n=0$ and $b'\geq 1$.
\end{enumerate}

Finally, let's show $K\leq N$, where $N$ is defined using the $\SBp$ found in observation~\eqref{steprewrite} above. By~\eqref{steprewrite}, $\gamma=w(\gamma_1,\dots,\gamma_{r+b'})$ is a word in $\SBp$. 
Let $\varphi:\FF_{r+b'}=\langle \delta_1,\ldots,\delta_{r+b'}\rangle\to \pi_1(U_{B})$ 
be the group homomorphism given by $\delta_i\mapsto\gamma_i$ 
for all $i\in\{1,\ldots,r+b'\}$. Let $\alpha$ be an element in $\ncl(\gamma)$ and write $\alpha$ as a product 
$\prod_{i=1}^{u}g_i^{-1}\gamma^{d_i}g_i$, where $g_i$ is an element of $\pi_1(U_{B})$. By Lemma~\ref{lemma:semidirect} 
applied to $\pi_1(U_{B \cup B_F})$, $g_i$ in $\pi_1(U_{B})$ can be written as $g_i=w_ih_i$, where $w_i$ is a word in $\SF$ 
(so $w_i\in K$) and $h_i$ is a word in $\SBp$. Since $\gamma$ commutes with the 
elements of $K\leq\pi_1(U_{B})$, $\alpha$ can be written as the product $\prod_{i=1}^{u}h_i^{-1}\gamma^{d_i}h_i$. 
In other words, we have shown that $\ncl(\gamma)=\varphi(\ncl(a))$, where $\ncl(a)$ is the normal closure in 
$\FF_{r+b'}$ of the subgroup generated by $a$, where $a=w(\delta_1,\dots,\delta_{r+b'})$ (the word $w$ for $\gamma$
in the new letters $\delta_1,\dots,\delta_{r+b'})$. In particular, one has $K\leq\varphi(\ncl(a))$.

A similar argument, this time using that $K$ is abelian, shows that
\begin{equation}\label{eq:Nvarphi}
N=\varphi\left(\ncl\left(\langle \delta_k^{m_k} \mid P_k\in B'\rangle\right)\right).
\end{equation}

Now, the composition $F_*\circ \varphi$ induces an isomorphism in the quotients given by the composition of
$$
\FF_{r}*\left(\mathop{*}_{P_k\in B'}\ZZ_{m_k}\right)\cong
\FF_{r+b'}/\ncl\left(\langle \delta_j^{m_k} \mid P_k\in B'\rangle\right)\to \pi_1(U_{B})/N
$$
with
$$
\pi_1(U_{B})/N\to \pi_1^{\orb}(S_{(n+1,\bar m)})\cong\FF_{r}*\left(\mathop{*}_{P_k\in B'}\ZZ_{m_k}\right).
$$
This implies that $\ker(F_*)\cap\im(\varphi)/N$ is trivial. Recall that $K=\ker(F_*)$ and that, 
$K\leq\varphi\left(\ncl\left(a\right)\right)$. Hence $K=\ker(F_*)\cap\im(\varphi)$ and thus 
we arrive at the equality $K= N$. This concludes the proof.
\end{proof}

\begin{prop}\label{prop:presentationgamma}
Under the same notation as Lemma~\ref{lem:presentationexplicit} and the same hypotheses and choice of $\SBp$ as
Lemma~\ref{lemma:KequalsN}, every element of $K$ can be written as a word in the letters $\SBp$. In particular,
the presentation of $\pi_1(U_B)$ from Lemma~\ref{lem:presentationexplicit} can be transformed to a presentation on
generators~$\SBp$.
\end{prop}
\begin{proof}
Consider the presentation of $\pi_1(U_B)$ given in Lemma~\ref{lem:presentationexplicit}. The generators of 
$\pi_1(U_B)$ in $\SF$ are elements of $K$, which equals $N$ by Lemma~\ref{lemma:KequalsN}. Equation~\eqref{eq:Nvarphi} 
in the proof of Lemma~\ref{lemma:KequalsN} implies that the elements of $N$ are products of elements of the form 
$v\gamma_{k}^{m_k}v^{-1}$, where $v$ is a word in $\SBp$, and $P_k\in B'$. Using this, one can eliminate
the generators of $\pi_1(U_B)$ in $\SF$ in the presentation given in Lemma~\ref{lem:presentationexplicit}.
\end{proof}

\begin{rem}\label{remark:ngeq1}
Suppose that $n\geq 1$, and assume the hypotheses, choice of $\SBp$ and notation of Lemma~\ref{lemma:KequalsN}. In the proof of
Lemma~\ref{lemma:KequalsN}, $\SBp$ was chosen so that $\gamma=\gamma_1\in \SBp$.
Since $\ncl(\gamma)$ is contained in the centralizer of $N=K$ by~\eqref{stepabelian} in the proof of 
Lemma~\ref{lemma:KequalsN}, we can assume that the elements of $N$ are products of elements of the form
$v\gamma_{k}^{m_k}v^{-1},  P_k\in B'$ where $v$ is a word in $\SBp\setminus \{\gamma_1\}$,
i.e. the letter $\gamma_1=\gamma$ does not appear in~$v$.
\end{rem}

\begin{rem}\label{remark:nequals0}
Under the hypotheses, choice of $\SBp$ and notation of Lemma~\ref{lemma:KequalsN}, suppose moreover that $U$ is simply connected, 
$B=\{Q\}$ ($Q\notin B_F$), and $M_F\neq\emptyset$. Note that in this case $r=n=0$ (Remark~\ref{rem:h1-torsion}) 
and $B'=M_F$. In this setting, Corollary~\ref{cor:presentation} implies $\ncl(\gamma)=\pi_1(U_B)$.
By~\eqref{stepabelian} in the proof of Lemma~\ref{lemma:KequalsN}, one has $K=N$ is contained in the center of 
$\pi_1(U_B)$. In particular, any subgroup generated by elements in $K$ is normal, and thus
$$
N=\langle \gamma_k^{m_k}\mid {P_k\in M_F}\rangle.
$$
\end{rem}

The following two corollaries pertain to the case $n\geq 1$ (Corollary~\ref{cor:presentationnleq1}) and 
$n=0, B'=M_F\setminus B\neq \emptyset$ (Corollary~\ref{cor:presentationn=0simply}) in Lemma~\ref{lemma:KequalsN} 
respectively, and give useful presentations of $\pi_1(U_B)$. 
Note that in Corollary~\ref{cor:presentationn=0simply}, $S=\PP^1$ and $U$ are both assumed to be simply connected.

\begin{cor}\label{cor:presentationnleq1}
Assume $F:U\to S$ is an admissible map. Let $B=\{P_0,P_1,\ldots,P_n\}\subset S$ be such that $\# B=n+1\geq 2$. 
Let $S_{(n+1,\bar m)}$ (resp. $S_{(n,\bar m)}$) be the maximal orbifold structure of $S$ with 
respect to $F_|:U_{B}\to S\setminus B$ (resp. $F_|:U_{B\setminus\{P_1\}}\to S\setminus (B\setminus\{P_1\})$).

Suppose that $F_*:\pi_1(U_{B\setminus\{P_1\}})\to\pi_1^{\orb}(S_{(n,\bar m)})$ is an isomorphism, and that $P_1\notin B_F$. 
Let $B'=M_F\setminus B$, and let $\SBp=\SBp(B\cup B')$ be an adapted geometric set of base loops w.r.t. $F$ and $B\cup B'$ as in Remark~\ref{remark:ngeq1}. Then $\pi_1(U_B)$ has a finite presentation
$$
\pi_1(U_B)=\langle\ \SBp : \{R_j\}_{j\in J}, \{\widetilde{R_i}\}_{i\in I} \rangle,$$
where $R_j$ is a word in $\SBp\setminus \{\gamma_1\}$ for all $j\in J$ and $\widetilde{R_i}=[\gamma_{1},w_i]$ 
for all $i\in I$, where $w_i$ is a word in~$\SBp\setminus \{\gamma_1\}$.
\end{cor}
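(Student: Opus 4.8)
The plan is to start from the explicit presentation of $\pi_1(U_B)$ provided by Lemma~\ref{lem:presentationexplicit}, on generators $\SF\cup\SBp$, and then to eliminate the generators in $\SF$ using the structural results of Lemma~\ref{lemma:KequalsN} and Corollary~\ref{cor:presentationgamma}. First I would check that the hypotheses of Lemma~\ref{lemma:KequalsN} are met: here $Q=P_1\in B\setminus(B_F\cap B)$ since $P_1\notin B_F$, we are assuming $F_*:\pi_1(U_{B\setminus\{P_1\}})\to\pi_1^{\orb}(S_{(n,\bar m)})$ is an isomorphism, and $n\geq 1$ (indeed $\# B=n+1\geq 2$), so we are in the first of the two cases. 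Hence $K:=\ker\bigl(F_*:\pi_1(U_B)\to\pi_1^{\orb}(S_{(n+1,\bar m)})\bigr)$ is abelian and, for the choice of $\SBp$ described in Remark~\ref{remark:ngeq1} (in which $\gamma=\gamma_1\in\SBp$ is a positively oriented meridian about the irreducible typical fiber $C_{P_1}$), one has $K=N$, the normal closure of $\langle\gamma_k^{m_k}\mid P_k\in B'\rangle$. By Corollary~\ref{cor:presentationgamma}, every element of $\SF$, being an element of $K=N$, is a word in the letters $\SBp$, and in fact by Remark~\ref{remark:ngeq1} a product of conjugates $v\gamma_k^{m_k}v^{-1}$ with $v$ a word in $\SBp\setminus\{\gamma_1\}$.

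Next I would perform the Tietze transformations: for each generator $w\in\SF$, substitute the expression of $w$ as a word in $\SBp$ and delete $w$ from the generating set; every relation of types~\ref{R1}--\ref{R4} in Lemma~\ref{lem:presentationexplicit} then becomes a word in $\SBp$. The key bookkeeping point is to track \emph{where $\gamma_1$ appears} in the resulting relators. The relations of type~\ref{R1} include $[\gamma_1,w]=1$ for all $w\in\SF$, i.e. $[\gamma_1,w_i]=1$ once each $w_i\in\SF$ is rewritten in $\SBp\setminus\{\gamma_1\}$ — these yield the family $\widetilde{R_i}=[\gamma_1,w_i]$. For the remaining relations — the relations $[\gamma_k,w]=1$ for the other $\gamma_k$ lifting meridians of $B\setminus B_F$, the monodromy relations~\ref{R2}, the relations~\ref{R3}, and the multiplicity relations~\ref{R4} $\gamma_k^{m_k}=z_k$ — once we substitute the expressions for the $w$'s (words in $\SBp\setminus\{\gamma_1\}$ by Remark~\ref{remark:ngeq1}) and the $\gamma_k^{m_k}=z_k$ with $z_k$ a word in $\SF$ hence in conjugates of $\gamma_k^{m_k}$'s, the resulting relators $R_j$ involve only letters from $\SBp\setminus\{\gamma_1\}$, because the only occurrences of $\gamma_1$ in the original presentation are in the type-\ref{R1} relations $[\gamma_1,w]$ and in the commutation of $\gamma_1$ with $\SF$, which we have already separated out. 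This is exactly the asserted form $\pi_1(U_B)=\langle\SBp:\{R_j\}_{j\in J},\{\widetilde{R_i}\}_{i\in I}\rangle$.

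The main obstacle I anticipate is the careful verification that after all substitutions \emph{no} stray $\gamma_1$ survives in the $R_j$'s: one must use that the adapted choice of $\SBp$ from Remark~\ref{remark:ngeq1} guarantees $\gamma_1$ does not occur in the conjugating words $v$, and that the words $z_{k,w}$, $z_k$, and the type-\ref{R3} words $y$ of Lemma~\ref{lem:presentationexplicit} are words in $\SF$ alone, each of which is by construction a product of $\gamma_1$-free conjugates of $\gamma_k^{m_k}$, $P_k\in B'$. A secondary point is finiteness: Lemma~\ref{lem:presentationexplicit} already furnishes a \emph{finite} presentation, and Tietze transformations eliminating finitely many generators preserve finiteness, so $J$ and $I$ are finite. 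Once these two checks are in place the corollary follows.
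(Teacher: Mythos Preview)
Your proposal is correct and follows essentially the same approach as the paper: starting from the presentation of Lemma~\ref{lem:presentationexplicit}, applying Corollary~\ref{cor:presentationgamma} to eliminate the generators in $\SF$, and then using Remark~\ref{remark:ngeq1} to verify that $\gamma_1$ only occurs in the relations of type~\ref{R1} with $k=1$, which become the $\widetilde{R_i}$. The paper's proof is considerably more terse, but your more detailed tracking of where $\gamma_1$ can appear (and your explicit verification of the hypotheses of Lemma~\ref{lemma:KequalsN}) is exactly the content behind its one-sentence argument.
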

\begin{proof}
Consider the presentation of $\pi_1(U_B)$ explained in the proof of Proposition~\ref{prop:presentationgamma} on
generators $\SBp$, which arises from the presentation in Lemma~\ref{lem:presentationexplicit}. Using 
Remark~\ref{remark:ngeq1}, we see that all of the relations appearing in our presentation are either of type 
$\widetilde{R}_i$ (relations \ref{R1} in Lemma~\ref{lem:presentationexplicit}, for $k=1$) or of type $R_j$ 
(rest of the relations in Lemma~\ref{lem:presentationexplicit}).
\end{proof}

\begin{cor}
\label{cor:presentationn=0simply}
Assume $F:U\to \PP^1$ is an admissible map, where $U$ is a simply connected quasi-projective surface. 
Let $Q\in \PP^1\setminus B_F$, and let $\PP^1_{(1,\bar m)}$ be the maximal orbifold structure of $\PP^1$ 
with respect to $F_|:U_{\{Q\}}\to \PP^1\setminus \{Q\}$. Assume that $M_F\neq\emptyset$. 

Let $\SBp$ be an adapted geometric set of base loops w.r.t. $F$ and let $\{Q=P_0\}\cup M_F$ be given by Lemma~\ref{lemma:KequalsN}.
Then, $\pi_1(U_{\{Q\}})$ has a finite presentation
$$
\pi_1(U_{\{Q\}})=\langle\ \SBp : \{R_j\}_{j\in J}, \{[\gamma_k,\gamma_i^{m_i}]\}_{P_k,P_i\in M_F} \rangle,
$$
where $m_k$ is the multiplicity of the fiber $F^*(P_k)$ and $R_j=\prod_{P_k\in M_F}(\gamma_k^{m_k})^{n_{kj}}$
for some $n_{kj}\in\ZZ$, $j\in J$.
\end{cor}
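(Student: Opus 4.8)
The plan is to specialize Corollary~\ref{cor:presentationnleq1} — or rather its proof — to the situation where $U$ is simply connected, $S=\PP^1$, $B=\{Q\}$, and $M_F\neq\emptyset$, invoking Remark~\ref{remark:nequals0} in place of Remark~\ref{remark:ngeq1}. First I would observe that, by Remark~\ref{rem:h1-torsion}, the hypothesis that $U$ is simply connected (hence $H_1(U;\CC)=0$) forces the base of the admissible map to be $\PP^1$ and, more to the point, forces $r=n=0$ in the notation of Lemma~\ref{lemma:KequalsN}, so that $B=\{Q\}$ already satisfies $\#B = n+1 = 1$ and $B'=M_F\setminus B = M_F$ (as $Q\notin B_F\supseteq M_F$). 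Since $M_F\neq\emptyset$, we are in the second bullet case of Lemma~\ref{lemma:KequalsN}; moreover $\pi_1(U_\emptyset)=\pi_1(U)=1$ maps isomorphically onto $\pi_1^{\orb}(\PP^1_{(0,\bar m)})$, which is trivial because an orbifold $\PP^1$ with no punctures and at most two (here: possibly several, but by Remark~\ref{rem:2fibers} at most two) multiple points and $\Sigma_0=\emptyset$ has trivial orbifold fundamental group when $r=0$ — wait, more carefully: the hypothesis ``$F_*:\pi_1(U_{B\setminus\{Q\}})\to \pi_1^{\orb}(S_{(n,\bar m)})$ is an isomorphism'' of Lemma~\ref{lemma:KequalsN} holds vacuously here since both groups are trivial. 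So all hypotheses of Lemma~\ref{lemma:KequalsN} and Corollary~\ref{cor:presentationgamma} are in force.

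Next I would extract the presentation. By Corollary~\ref{cor:presentationgamma}, $\pi_1(U_{\{Q\}})$ has a presentation on the generators $\SBp = \SB(\{Q\}\cup M_F)$ obtained from the presentation of Lemma~\ref{lem:presentationexplicit} by eliminating the fiber generators $\SF$, using that each element of $K=\ker F_*$ equals an element of $N$, and by Remark~\ref{remark:nequals0} that $N=\langle \gamma_k^{m_k}\mid P_k\in M_F\rangle$ lies in the center of $\pi_1(U_{\{Q\}})$. Here $\SBp$ has exactly $b'=\#M_F$ elements $\gamma_k$, one lifted meridian per point of $M_F$ (since $r=2g_S+n=0$ there are no ``$a_i,b_i$'' base loops and no $\gamma$ associated to $Q$ other than the relation-defining $\tilde\gamma$; indeed in the proof of Lemma~\ref{lemma:KequalsN}, step~\eqref{steprewrite}, with $n=0$ and $b'\geq 1$ the meridian around $C_Q$ is written as $\tilde\gamma=\bigl(\prod_{P_k\in M_F}\gamma_k\bigr)^{-1}$ up to a word in $\SF$, which after the normalization becomes exactly $\bigl(\prod_{P_k\in M_F}\gamma_k\bigr)^{-1}$). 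I would then translate each relation type from Lemma~\ref{lem:presentationexplicit} into the new generators: relations \ref{R1} do not occur because $B\setminus B_F=\{Q\}\setminus B_F=\{Q\}$ but $\gamma$-type loops for $Q$ are not in $\SBp$ here; relations \ref{R2} are the monodromy relations $[\gamma_k,w]=z_{k,w}$ for $P_k\in M_F$, which after eliminating $w\in\SF$ (rewritten as central words in the $\gamma_k^{m_k}$) become commutators $[\gamma_k,\gamma_i^{m_i}]=1$; relations \ref{R4}, $\gamma_k^{m_k}=z_k$, become — after rewriting $z_k\in K=N$ — relations expressing $\gamma_k^{m_k}$ as a word in the central generators $\gamma_i^{m_i}$; and relations of type \ref{R3}, together with the leftover content of \ref{R4}, collapse to relations $R_j$ that are words purely in the central elements $\gamma_k^{m_k}$, hence of the asserted form $R_j=\prod_{P_k\in M_F}(\gamma_k^{m_k})^{n_{kj}}$.

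The main obstacle, I expect, is the bookkeeping in the last step: verifying that after eliminating the fiber generators $\SF$, \emph{every} surviving relation that is not one of the commutators $[\gamma_k,\gamma_i^{m_i}]$ can be put in the purely-central form $\prod (\gamma_k^{m_k})^{n_{kj}}$ — i.e. that no ``genuine'' relation in the $\gamma_k$'s themselves (a word not expressible through the $\gamma_k^{m_k}$) appears. This is where centrality of $N=K$ (Remark~\ref{remark:nequals0}) together with the isomorphism $\pi_1(U_{\{Q\}})/N\cong\pi_1^{\orb}(\PP^1_{(1,\bar m)})\cong \ZZ_{m_1}*\cdots*\ZZ_{m_s}$ established in the proof of Lemma~\ref{lemma:KequalsN} is essential: modulo $N$ the presentation must reduce to the standard presentation $\langle \gamma_k \mid \gamma_k^{m_k}=1\rangle$ of the free product of cyclics, so any relation $R_j$ becomes trivial in that quotient, which forces $R_j\in N$, and by centrality $N$ is exactly the set of products $\prod(\gamma_k^{m_k})^{n_{kj}}$ (no conjugation needed). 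Combining this with Remark~\ref{remark:nequals0}'s description of $N$ closes the argument. I would present the relation $\tilde\gamma=\bigl(\prod_{P_k\in M_F}\gamma_k\bigr)^{-1}$ explicitly only if needed to justify that no extra generator survives; otherwise the count $\#\SBp=\#M_F$ follows directly from $r=n=0$.
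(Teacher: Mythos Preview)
Your approach is essentially the paper's: verify the hypotheses of Lemma~\ref{lemma:KequalsN} (the surjection $\pi_1(U)\twoheadrightarrow\pi_1^{\orb}(\PP^1_{(0,\bar m)})$ from Remark~\ref{rem:inducedorb} forces the latter to be trivial, so $F_*$ is trivially an isomorphism), invoke Corollary~\ref{cor:presentationgamma} together with Remark~\ref{remark:nequals0}, then rewrite the relations from Lemma~\ref{lem:presentationexplicit}. Two small corrections. First, the \ref{R2} relations do not themselves ``become'' the commutators $[\gamma_k,\gamma_i^{m_i}]$: the paper \emph{adds} the commutators as new (redundant) relations first, and only then do the substituted \ref{R2}--\ref{R4} relations collapse to the $R_j$ form. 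Second, your ``main obstacle'' paragraph is circular as written: a relator $R_j$ is already trivial in $\pi_1(U_{\{Q\}})$, so its triviality in $\pi_1(U_{\{Q\}})/N$ is automatic and says nothing about the \emph{word} $R_j$ in the free group on $\SBp$. The argument that actually works is your earlier direct one (substitute each $\SF$-generator by a product $\prod(\gamma_i^{m_i})^{n_i}$ using Remark~\ref{remark:nequals0}, then use centrality), or equivalently: each relator lies in $\ncl(\{\gamma_k^{m_k}\})$ inside $F(\SBp)$ because $F(\SBp)/\ncl(\{\gamma_k^{m_k}\})\cong\pi_1^{\orb}(\PP^1_{(1,\bar m)})$, and once the commutator relations are imposed each conjugate of $\gamma_k^{m_k}$ equals $\gamma_k^{m_k}$ itself, so every relator reduces to a product $\prod(\gamma_k^{m_k})^{n_{kj}}$.
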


\begin{rem}\label{rem:A12}
By Remark~\ref{rem:2fibers}, note that $\# M_F=1$ or $2$ in Corollary~\ref{cor:presentationn=0simply}.
\end{rem}

\begin{proof}[Proof of Corollary~\ref{cor:presentationn=0simply}]
Since $U$ is simply connected and $F$ is admissible, $\pi_1^{\orb}(\PP^1_{(0,\bar m)})$ must be trivial by Remark~\ref{rem:inducedorb}.
Hence, $F_*:\pi_1(U)\to\pi_1^{\orb}(\PP^1_{(0,\bar m)})$ is trivially an isomorphism, and the hypotheses of Lemma~\ref{lemma:KequalsN} are satisfied.

By Remark~\ref{remark:nequals0}, the elements of $K=N$ (an abelian group) are all of the form \linebreak
$\prod_{P_k\in M_F}\left(\gamma_k^{m_k}\right)^{n_k}$, where $n_k\in\ZZ$.

Since Remark~\ref{remark:nequals0} says that $N$ is in the center of $\pi_1(U_{\{Q\}})$,
we can add the relations $[\gamma_k,\gamma_i^{m_i}]$ for all $P_i,P_k\in M_F$ to the
presentation of $\pi_1(U_{\{Q\}})$ of Proposition~\ref{prop:presentationgamma} without
changing the group. After that, note that we can transform the
relations already appearing in the presentation of Proposition~\ref{prop:presentationgamma}
(coming from \ref{R2}--\ref{R4}
in Lemma~\ref{lem:presentationexplicit}) to elements of $K=N$, and hence, as relations
of type~$R_j$.
\end{proof}

\subsection{Deletion Lemma}

\begin{thm}[Deletion Lemma]
\label{lemma:removing-fiber-orbi}
Let $U$ be a smooth quasi-projective surface and let  $F:U\rightarrow S$ be an admissible
map to a smooth projective curve $S$.
Assume $B\subset S$ is such that $\# B=n\geq 1$ and $r:=2g_S+n$. Consider $P\in S\setminus B$.
Let $S_{(n+1,\bar m)}$
(resp. $S_{(n,\bar m')}$) be the maximal orbifold structure of $S$ with respect to
$F_|:U_{B\cup\{P\}}\to S\setminus (B\cup\{P\})$
(resp. $F_|:U_{B}\to S\setminus B$).

If 
$F_*:\pi_1(U_{B\cup\{P\}})\to \pi_1^{\orb}(S_{(n+1,\bar m)})$ 
is an isomorphism, then
$$F_*:\pi_1(U_{B})\to \pi_1^{\orb}(S_{(n,\bar m')})$$ is an isomorphism.

Moreover, if
$\pi_1(U_{B\cup\{P\}})\cong \FF_{r}*\ZZ_{m_1}*\dots*\ZZ_{m_s},$
and $p\geq 1$ denotes the multiplicity of $F^*(P)$, then
$$\pi_1(U_{B})\cong 
\FF_{r-1}*\ZZ_{p}*\ZZ_{m_1}*\dots*\ZZ_{m_s}.
$$
\end{thm}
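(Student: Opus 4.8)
The plan is to reduce everything to the explicit presentation machinery of Section~4.1 and a Tietze-transformation argument. First I would observe that the hypothesis "$F_*:\pi_1(U_{B\cup\{P\}})\to\pi_1^{\orb}(S_{(n+1,\bar m)})$ is an isomorphism" is precisely what is needed to feed into Lemma~\ref{lemma:KequalsN} with the roles $B\leftrightarrow B\cup\{P\}$ and $Q\leftrightarrow P$: indeed $P\in (B\cup\{P\})\setminus(B_F\cap(B\cup\{P\}))$ since $P\notin B_F$ would not even be needed here because if $P\in B_F$ one replaces the set of multiple values accordingly — but in fact the cleanest route is to note that $F_|:U_{B}\to S\setminus B$ only differs from $F_|:U_{B\cup\{P\}}\to S\setminus(B\cup\{P\})$ by adding back the fiber $C_P$, and to use Corollary~\ref{cor:presentation} to write $\pi_1(U_{B\cup\{P\}})\cong\pi_1(U_{B})/\mathrm{NCl}(\gamma)$ when $P$ is atypical (or, when $P$ is typical, directly compare). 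So I would split into the two cases $P\notin B_F$ and $P\in B_F$, handling the generic case first.

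\emph{Case $P\notin B_F$ (generic fiber).} Here Theorem~\ref{lemma:generic-fiber-orbi} (the Generic Addition-Deletion Lemma) applies verbatim with the set "$B$" there equal to our $B$ and "$P$" equal to our $P$: its two bulleted conditions are equivalent, and the second one is exactly our hypothesis, so the first one — that $F_*:\pi_1(U_B)\to\pi_1^{\orb}(S_{(n,\bar m')})$ is an isomorphism — follows, and moreover $\pi_1(U_{B\cup\{P\}})\cong\ZZ*\pi_1(U_B)$. Since a generic fiber is typical, $p=1$, so $\ZZ_p$ is trivial and the claimed formula $\pi_1(U_B)\cong\FF_{r-1}*\ZZ_{m_1}*\dots*\ZZ_{m_s}$ is just a restatement of $\pi_1(U_{B\cup\{P\}})\cong\ZZ*\pi_1(U_B)$ together with $\pi_1(U_{B\cup\{P\}})\cong\FF_r*\ZZ_{m_1}*\dots*\ZZ_{m_s}=\ZZ*(\FF_{r-1}*\ZZ_{m_1}*\dots*\ZZ_{m_s})$, after invoking the Kurosh subgroup theorem (or the Grushko–Neumann theorem) to cancel the free $\ZZ$ factor and identify the complement. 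So in this case there is essentially nothing new to prove.

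\emph{Case $P\in B_F$ (atypical, possibly multiple, fiber).} This is the genuinely new content. Write $p\geq 1$ for the multiplicity of $F^*(P)$. I would apply Lemma~\ref{lemma:KequalsN} to the pair $(B\cup\{P\},\,Q=P)$: the hypothesis that $F_*$ on $\pi_1(U_{B\cup\{P\}})$ is an isomorphism is not the right input there — rather I apply it in the reverse direction. Concretely: by Corollary~\ref{cor:presentationnleq1} (valid since $\#(B\cup\{P\})\geq 2$) applied with the distinguished point being $P$, assuming $F_*$ on $\pi_1(U_{B})$ were an isomorphism we would get a presentation of $\pi_1(U_{B\cup\{P\}})$; but I want the converse. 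So instead I proceed by Tietze transformations directly. Using Lemma~\ref{lem:presentationexplicit} for the set $B$ (with $B'=M_F\setminus B$), I get a presentation of $\pi_1(U_B)$ on generators $\SF\cup\SBp'$ where now $\SBp'$ is an adapted set for $F$ and $B\cup(M_F\setminus B)$, \emph{which contains a lift $\gamma_P$ of a meridian around $P$ with relation of type \ref{R4}: $z_P=\gamma_P^{\,p}$ for a word $z_P$ in $\SF$}. Meanwhile, since $F_*$ on $\pi_1(U_{B\cup\{P\}})$ is an isomorphism, by Corollary~\ref{cor:presentation} $\pi_1(U_B)/\mathrm{NCl}(\gamma_P)\cong\FF_r*\ZZ_{m_1}*\dots*\ZZ_{m_s}$. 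Now run the argument of Lemma~\ref{lemma:KequalsN} with $Q=P\in(B\cup(M_F\setminus B))$ after noting $P\in B_F\cap(B\cup\{P\})$ but $P\notin M_F$ forces $p=1$; so in the \textbf{multiple} case $p>1$ one must instead keep $P$ in the set of multiple values and argue that $K:=\ker(F_*:\pi_1(U_B)\to\pi_1^{\orb}(S_{(n,\bar m')}))$ equals $N=\mathrm{NCl}(\langle\gamma_k^{m_k}: P_k\in M_F\setminus B\rangle)$ — this requires $\pi_1(U_{B\cup\{P\}})$'s $F_*$ being an iso as the input to the Kurosh-type comparison in the last paragraph of that proof. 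Once $K=N$ is established, Corollary~\ref{cor:presentationgamma} gives a presentation of $\pi_1(U_B)$ on generators $\SBp'$ alone, from which one reads off $\pi_1(U_B)\cong\FF_{r}*\big(\Conv_{P_k\in M_F\setminus B}\ZZ_{m_k}\big)\,\cong\,\FF_{r-1}*\ZZ_p*\big(\Conv_{P_k\in M_F}\ZZ_{m_k}\big)$, using that $M_F\setminus B = M_F\setminus(B\cup\{P\})$ together with $\{P\}$ when $p>1$ and that $2g_S+\#B = r$ versus $2g_S+\#(B\cup\{P\}) = r$... — wait, here $\#B = n-1$ so its base-loop count is $2g_S+(n-1)-1 = r-1$ free generators from base loops: that is exactly the source of the $\FF_{r-1}$, and the extra $\ZZ_p$ comes from the now-multiple fiber at $P$. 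The bookkeeping of free ranks (being careful that $r=2g_S+n$ refers to $\pi_1(U_{B\cup\{P\}})$, so $\#B=n-1$) is the part most likely to hide a sign error and should be checked against the $p=1$ case for consistency with Theorem~\ref{lemma:generic-fiber-orbi}.

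\textbf{The main obstacle.} The real difficulty is the atypical multiple case: showing $K=N$ when $P$ becomes a new multiple value. One cannot directly quote Lemma~\ref{lemma:KequalsN} because its hypothesis is that $F_*$ is an iso \emph{after} deleting $Q$, whereas here we know it \emph{before} deleting and want it after. So the honest route is to rerun the Reidemeister–Schreier/normal-closure analysis: establish that $K$ is abelian and central-in-the-fiber-direction as in steps \eqref{stepcommute}--\eqref{steprewrite} of that proof (these only used that $C_P$ is a fiber, typical or not — for a multiple fiber one works with $\gamma_P^{\,p}$ in place of $\gamma_P$), then use the given isomorphism $\pi_1(U_B)/\mathrm{NCl}(\gamma_P)\cong\FF_r*\ZZ_{m_1}*\dots*\ZZ_{m_s}$ in place of the isomorphism onto $\pi_1^{\orb}(S_{(n+1,\bar m)})$ to force the intersection $\ker(F_*)\cap\mathrm{im}(\varphi)/N$ to be trivial. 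Hopfianity (Lemma~\ref{lemma:EpiImpliesIso}) is the clean tool to upgrade the surjection $F_*:\pi_1(U_B)\twoheadrightarrow\pi_1^{\orb}(S_{(n,\bar m')})$ to an isomorphism once one knows both sides are finitely generated free products of cyclic groups of the same "abelianized type" — which is where the explicit presentation from Corollary~\ref{cor:presentationgamma} is used — so I would organize the final step as: (1) produce the presentation of $\pi_1(U_B)$ showing it is $\FF_{r-1}*\ZZ_p*\ZZ_{m_1}*\dots*\ZZ_{m_s}$, (2) note $F_*$ is onto $\pi_1^{\orb}(S_{(n,\bar m')})$ by Corollary~\ref{cor:Fsurjective-orb}, (3) check these two groups are isomorphic (compare torsion and abelianization), (4) conclude $F_*$ is an isomorphism by Lemma~\ref{lemma:EpiImpliesIso}.
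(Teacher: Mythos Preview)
Your approach has a genuine gap and misses the simple argument the paper uses.

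\textbf{Circularity in the generic case.} You invoke Theorem~\ref{lemma:generic-fiber-orbi} to handle $P\notin B_F$, but look at how the paper proves that theorem: the very first line says ``The `if' as well as the `moreover' parts of the statement are a particular case of the Deletion Lemma~\ref{lemma:removing-fiber-orbi}.'' So the implication you want to quote is proved \emph{by} the Deletion Lemma --- you cannot use it to prove the Deletion Lemma.

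\textbf{The atypical case does not go through either.} In Case $P\in B_F$ you propose to rerun the argument of Lemma~\ref{lemma:KequalsN}. But that lemma explicitly assumes $Q\in B\setminus(B_F\cap B)$, i.e.\ $Q\notin B_F$, and this is used essentially: step~\eqref{stepabelian} (that $K$ is abelian) relies on the type~\ref{R1} relations $[\gamma,w]=1$ coming from $Q$ being a \emph{typical} value. For $P\in B_F$ those relations become type~\ref{R2}, $[\gamma_P,w]=z_{P,w}$ with $z_{P,w}$ a nontrivial fiber word, so the commutativity argument breaks down and you have no clean route to $K=N$.

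\textbf{What the paper actually does.} The proof is three lines and needs no case split and none of the Section~4.1 machinery. By Corollary~\ref{cor:Fsurjective-orb} applied to $U_{B\cup\{P\}}$, the sequence
\[
\pi_1(F^{-1}(Q))\xrightarrow{\iota_*}\pi_1(U_{B\cup\{P\}})\xrightarrow{F_*}\pi_1^{\orb}(S_{(n+1,\bar m)})\to 1
\]
is exact; since $F_*$ is assumed an isomorphism, $\iota_*$ is the trivial map. But the inclusion of the generic fiber into $U_B$ \emph{factors through} $U_{B\cup\{P\}}\subset U_B$, so $\pi_1(F^{-1}(Q))\to\pi_1(U_B)$ is also trivial. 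Now apply Corollary~\ref{cor:Fsurjective-orb} to $U_B$ (valid since $n\geq 1$): the exact sequence forces $F_*:\pi_1(U_B)\to\pi_1^{\orb}(S_{(n,\bar m')})$ to be an isomorphism. The ``moreover'' part is then just reading off $\bar m'=\bar m$ if $p=1$ and $\bar m'=(p,m_1,\dots,m_s)$ if $p>1$. The whole point is that triviality of the fiber image is \emph{inherited upward} along the inclusion $U_{B\cup\{P\}}\hookrightarrow U_B$ --- no presentations, no Tietze moves, no Hopfianity needed.
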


\begin{proof}
By hypothesis,
$\pi_1(U_{B\cup\{P\}})\cong\pi_1(S_{(n+1,\bar m)})\cong\FF_{r}*\ZZ_{m_1}*\dots*\ZZ_{m_s}$ for integers $m_i\geq 2$, 
$i\in I=\{1,\dots,s\}$. Also, by Corollary~\ref{cor:Fsurjective-orb}, the inclusion of 
the generic fiber (over a point $Q\in S\setminus (B\cup\{P\}\cup B_F)$) induces the trivial morphism 
$\pi_1(F^{-1}(Q))\to \pi_1(U_{B\cup\{P\}})$. Since $U_{B\cup\{P\}}\subset U_{B}$, the inclusion of the generic fiber also 
induces the trivial morphism $\pi_1(F^{-1}(Q))\to \pi_1(U_{B})$. By Corollary~\ref{cor:Fsurjective-orb},
$$
F_*:\pi_1(U_{B})\to \pi_1^{\orb}(S_{(n,\bar m')})
$$
is an isomorphism since $n>0$.

For the \emph{moreover} part, note that if $\bar m=(m_1,\ldots, m_s)$, then
\[
\begin{gathered}[b]
\bar m':=
\begin{cases}
\bar m & \textrm{ if } p=1,\\
(p,m_1,\dots,m_s) & \textrm{ if } p>1. 
\end{cases}
\vspace*{-5pt}
\end{gathered}
\qedhere
\]
\end{proof}

\begin{rem}
The Deletion Lemma also holds in the case $n=0$ in the following way: if $F_*:\pi_1(U_{\{P\}})\to \pi_1^{\orb}(S_{(1,\bar m)})$ is an isomorphism, then $F_*:\pi_1(U)\to \pi_1^{\orb}(S_{(0,\bar m')})$ is also an isomorphism. A more subtle proof can be given using the presentation of $U_{\{P\}}$ from 
Lemma~\ref{lem:presentationexplicit}, taking into account that the elements of $\Gamma_F$ are trivial in $\pi_1(U_{\{P\}})$. 
Since the result is not needed for the purpose of this paper, we omit it.
\end{rem}

\subsection{Proof of the Generic Addition-Deletion Lemma}

\begin{proof}[Proof of Theorem~\ref{lemma:generic-fiber-orbi}]
The ``if'' as well as the ``moreover'' parts of the statement are a particular case of the 
Deletion Lemma~\ref{lemma:removing-fiber-orbi}.

Let us show the ``only if'' part. Let $P_1=P$, $B\cup\{P\}=\{P_0,\ldots,P_{n}\}$. By Corollary~\ref{cor:presentationnleq1} 
(and using the notation therein) applied to $B$ and $B\cup\{P\}$, we have that $\pi_1(U_{B\cup\{P\}})$ has a presentation of the form
$$
\pi_1(U_{B\cup\{P\}})=\langle\ \SBp: \{R_j\}_{ j\in J}, \{\widetilde{R}_i\}_{i\in I}\rangle
$$
where $R_j$ are words in $\SBp\setminus \{\gamma_1\}$, and $\widetilde{R}_i=[\gamma_{1},w_i]$, 
where $w_i$ is a word in $\SBp\setminus \{\gamma_1\}$.

Let $H=\langle\ \SBp\setminus \{\gamma_1\} : \{R_j\}_{j\in J}\rangle$, and let 
$\varphi:\FF_{r+b'}=\langle\delta_1,\ldots,\delta_{r+b'}\rangle\to\pi_1(U_B)$ be the epimorphism sending $\delta_i$ to 
$\gamma_i$ for all $i=1,\ldots,r+b'$. We have that $\varphi$ factors through $\widetilde{\varphi}:\ZZ*H\to \pi_1(U_{B\cup\{P\}})$, 
where the $\ZZ$ free factor is generated by the letter $\gamma_{1}$. In particular $\widetilde{\varphi}$ is an epimorphism.

Moreover, according to the presentation of $\pi_1(U_{B\cup\{P\}})$ above,
$$
\pi_1(U_{B})\cong \pi_1(U_{B\cup\{P\}})/\ncl(\gamma_{1})\cong H
$$
Hence, we have found an epimorphism
$$
\widetilde{\varphi}:\ZZ*\pi_1(U_B)\to \pi_1(U_{B\cup\{P\}}).
$$
Let $F_*:\pi_1(U_{B\cup\{P\}})\to \pi_1^{\orb}(S_{(n+1,\bar m)})$. Since $\widetilde{\varphi}$ is an epimorphism and $F_*$ is 
also an epimorphism by Remark~\ref{rem:inducedorb}, $F_*\circ\widetilde{\varphi}$ is an epimorphism from $\ZZ*\pi_1(U_B)$ to $\pi_1^{\orb}(S_{(n+1,\bar m)})$, which is a free product of cyclic groups  isomorphic to
$\ZZ*\pi_1^{\orb}(S_{(n,\bar m)})\cong\ZZ*\pi_1(U_B)$. Hence, $F_*\circ\widetilde{\varphi}$ is an isomorphism by 
Lemma~\ref{lemma:EpiImpliesIso}. In particular, $\widetilde{\varphi}$ and $F_*$ are both isomorphisms, and 
\[\pi_1(U_{B\cup\{P\}})\cong\ZZ*\pi_1(U_B).\qedhere\]
\end{proof}

\begin{cor}\label{cor:polynomialmap}
The fundamental group of the complement of $r$ generic fibers of a primitive polynomial map $f:\CC^2\to\CC$ is 
free of order~$r$.
\end{cor}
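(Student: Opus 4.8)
The plan is to realise $\CC^{2}$ minus $r$ generic fibers of $f$ as $X_{B}$ for a suitable admissible rational pencil on $\PP^{2}$, and then to strip off the fibers one at a time by repeated use of the Generic Addition-Deletion Lemma~\ref{lemma:generic-fiber-orbi}. First I would fix the set-up: since $f$ is non-constant, $f-c$ is never a unit of $\CC[x,y]$, so $f$ is surjective onto $\CC$, and being primitive it has connected generic fibers and is therefore admissible. Let $d=\deg f$, let $f_{\mathrm{hom}}$ be the degree-$d$ homogenization of $f$, and let $z$ be an equation of the line at infinity $L_{\infty}\subset\PP^{2}$. Then $F=[f_{\mathrm{hom}}:z^{d}]$ defines an admissible rational map $\PP^{2}\dashrightarrow\PP^{1}$ whose base locus $\mathcal B$ is a finite subset of $L_{\infty}$, and $\overline{F^{-1}(\infty)}=L_{\infty}$ while $\overline{F^{-1}([c:1])}=\overline{\{f=c\}}$ for every $c\in\CC$. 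The crucial observation is that $F$ has no multiple fibers over $\CC$: if $F^{*}([c:1])$ were an $m$-multiple with $m>1$ for some $c\in\CC$, then every irreducible factor of $f-c$ in the UFD $\CC[x,y]$ would occur with exponent divisible by $m$, so $f-c=\lambda g^{m}$ for some $g\in\CC[x,y]$ and $\lambda\in\CC^{*}$, whence $f=(\lambda^{1/m}g)^{m}+c$ would contradict primitivity. In particular, for every finite $B\subset\CC$ the maximal orbifold structure on $\PP^{1}$ with respect to $F_|:X_{B\cup\{\infty\}}\to\PP^{1}\setminus(B\cup\{\infty\})$ carries no orbifold points, i.e.\ it equals $\PP^{1}_{(\#B+1,\,-)}$, whose orbifold fundamental group is the free group $\FF_{\#B}$.

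Next I would run the induction. Fix pairwise distinct generic values $c_{1},\dots,c_{r}$ of $f$, that is, $c_{i}\notin B_{f}$ with $B_{f}$ the finite set of atypical values of $f$, and put $B_{k}=\{\infty,c_{1},\dots,c_{k}\}$ for $0\le k\le r$. By Notation~\ref{not:XB}, $X_{B_{k}}=\PP^{2}\setminus\bigl(L_{\infty}\cup\bigcup_{i=1}^{k}\overline{\{f=c_{i}\}}\bigr)=\CC^{2}\setminus\bigcup_{i=1}^{k}f^{-1}(c_{i})$; in particular $X_{B_{0}}=\CC^{2}$, so $F_{*}:\pi_{1}(X_{B_{0}})=1\to\pi_{1}^{\orb}(\PP^{1}_{(1,-)})=1$ is trivially an isomorphism. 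Since the atypical values of $F$ are $B_{F}=B_{f}\cup\{\infty\}$, each $c_{k+1}$ lies in $\PP^{1}\setminus(B_{F}\cup B_{k})$, so Theorem~\ref{lemma:generic-fiber-orbi} applies with $B=B_{k}$ and $P=c_{k+1}$. An induction on $k=0,\dots,r-1$ then shows that $F_{*}:\pi_{1}(X_{B_{k+1}})\to\pi_{1}^{\orb}(\PP^{1}_{(k+2,-)})$ is an isomorphism and $\pi_{1}(X_{B_{k+1}})\cong\ZZ*\pi_{1}(X_{B_{k}})$; iterating, $\pi_{1}\bigl(\CC^{2}\setminus\bigcup_{i=1}^{r}f^{-1}(c_{i})\bigr)=\pi_{1}(X_{B_{r}})\cong\ZZ*\cdots*\ZZ=\FF_{r}$.

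The only genuinely delicate point is the compactification bookkeeping in the first two paragraphs: one has to check that deleting $L_{\infty}$ together with the closures of the $r$ affine fibers recovers exactly $\CC^{2}$ minus those fibers (so that $\mathcal B\subset L_{\infty}$ and $\overline{F^{-1}(\infty)}=L_{\infty}$ are genuinely used), and that primitivity forces the ambient orbifold structure to have no orbifold points, so that all the orbifold groups appearing along the way are honest free groups. Once these are settled, the argument is a purely formal induction, each step being a single invocation of Theorem~\ref{lemma:generic-fiber-orbi}.
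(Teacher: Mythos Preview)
Your proof is correct and follows essentially the same route as the paper: homogenize $f$ to a pencil $F=[\overline f:z^d]$ on $\PP^2$, take $U=\PP^2\setminus\mathcal B$, observe that $U_{\{\infty\}}=\CC^2$ so that $F_*$ is trivially an isomorphism onto $\pi_1^{\orb}(\PP^1_{(1,-)})=1$, and then add generic fibers one at a time via Theorem~\ref{lemma:generic-fiber-orbi}. The only substantive difference is your justification that $F_|:\CC^2\to\CC$ has no multiple fibers: you argue algebraically via the UFD structure (a multiple fiber would force $f-c=\lambda g^m$, hence $f$ composite, contradicting primitivity), whereas the paper argues that a multiple fiber would force the trivial group $\pi_1(\CC^2)$ to surject onto a non-trivial orbifold group $\pi_1^{\orb}(\CC_{(m)})$. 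Both are valid; the paper's argument is slightly slicker since it avoids invoking the equivalence ``primitive $\Leftrightarrow$ not composite''. One small quibble: your claimed equality $B_F=B_f\cup\{\infty\}$ need not hold as stated, but only the inclusion $B_F\subseteq B_f\cup\{\infty\}$ is needed (and this does hold, since removing the full fiber $L_\infty\setminus\mathcal B$ from $U$ preserves local triviality over $\CC$), so your application of Theorem~\ref{lemma:generic-fiber-orbi} goes through.
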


\begin{proof}
Consider $F:\PP^2\dashrightarrow\PP^1$, $F(x,y,z)=[\overline f(x,y,z):z^d]$, where $\overline f(x,y,z)$ is the 
homogenization of $f(x,y)$ and $d=\deg(f)$. Since $f$ is primitive (it has connected generic fibers), $F$ is an admissible map.
Let $\cB$ be the base points of $F$. Consider $U=\PP^2\setminus \cB$, and $F_|:U\to\PP^1$.
The restriction $F_|:\CC^2\to\CC$ induces an isomorphism of (trivial) fundamental groups. 
Note that $F_|:\CC^2\to\CC$ does not have multiple fibers, or else $F_*$ would factor through a surjection from
$\pi_1(\CC^2)$ to $\pi_1^{\orb}(\CC)$ for an orbifold on $\CC$, which is non-trivial. Consider $r$ generic fibers 
of $F_|$. Then the result follows from the Generic Addition-Deletion Lemma~\ref{lemma:generic-fiber-orbi}
to $F_|:U\to\PP^1$ and $B=\{[1:0]\}$.
\end{proof}

\begin{exam}
\label{exam:family}
Other examples of complements of curves with free fundamental groups include the following. 
Consider a polynomial $f(x,y)$ such that $\pi_1(\CC^2\setminus C_1)=\ZZ$ for $C_1=V(f=0)\subset \CC^2$ (for instance, if $C_1$ is irreducible and only has nodal singularities, including at infinity).
We have that $f_*:\pi_1(\CC^2\setminus C_1)\to\pi_1(\CC^*)$ is an epimorphism from $\ZZ$ to itself, so it is an isomorphism. 
Note that $f$ does not have multiple fibers, or else $f_*$ would factor through a surjection from
$\pi_1(\CC^2\setminus C_1)\cong\ZZ$ to $\pi_1^{\orb}(\CC^*)$ for an orbifold of general type on $\CC^*$, which is a 
non-abelian group. Consider $C_2,\dots,C_r$ generic 
fibers of $f$. Then the Generic Addition-Deletion Lemma~\ref{lemma:generic-fiber-orbi} yields $\pi_1(\CC^2\setminus C)=\FF_r$ 
for $C=C_1\cup\dots\cup C_r$.

Analogously, if $f_p$ (resp. $f_q$) is a form of degree $p$ (resp. $q$) with $\gcd(p,q)=1$ and
$\pi_1(\PP^2\setminus C_1)=\ZZ$ for $C_1=V(f_p)\cup V(f_q)\subset\PP^2$. This implies that $f_p$ and $f_q$ are irreducible
and in that case $F=[f_p^q:f_q^p]$ is also an admissible map (see for instance~\cite[Lemma 2.6]{ji-Eva-Zariski}).
Consider $C_2,\dots,C_r$ generic
fibers of $F=[f_p^q:f_q^p]$. Then we are under the hypotheses of the Generic Addition-Deletion Lemma~\ref{lemma:generic-fiber-orbi}
and hence $\pi_1(\PP^2\setminus C)=\FF_r$ for $C=C_1\cup\dots\cup C_r$.
\end{exam}

\subsection{A base case for the Addition Lemma}

Recall Notation~\ref{not:XB}. In light of Example~\ref{exam:family}, one might wonder if other pencils 
$F:X=\PP^2\dashrightarrow \PP^1$ give rise to curves whose fundamental group of their complement is isomorphic 
to an open orbifold group of $\PP^1$ through the morphism induced by $F$. The following result provides a base case for the Addition Lemma in the case $M_F\neq \emptyset$. 

\begin{thm}\label{thm:basecasesimply}
Let $X$ be a simply connected smooth projective surface, let  $F:X\dasharrow \PP^1$ be an admissible map, and let $P\in\PP^1$ 
be such that $F^{-1}(P)$ is a typical fiber. Suppose that $M_F\neq \emptyset$. 

Let $\PP^1_{(1,\bar m)}$ be the maximal orbifold structure of $\PP^1\setminus \{P\}$ with respect to 
$F:X_{\{P\}}\to\PP^1\setminus \{P\}$, where $\bar m=(p,q)$, $p\geq q\geq 1$, and $\gcd(p,q)=1$. Then the following 
statements are equivalent:
\begin{enumerate}
 \item \label{item:1basecasesimply}$H_1(X_{\{P\}})=\ZZ_{pq}$
 \item \label{item:2basecasesimply}$F_*:\pi_1(X_{\{P\}})\to\pi_1^{\orb}(\PP^1_{(1,\bar m)})\cong\ZZ_p*\ZZ_q$ is an isomorphism.
\end{enumerate}
\end{thm}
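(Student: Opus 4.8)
The implication \ref{item:2basecasesimply}$\Rightarrow$\ref{item:1basecasesimply} is immediate: if $F_*$ is an isomorphism onto $\ZZ_p*\ZZ_q$, then abelianizing gives $H_1(X_{\{P\}})\cong\ZZ_p\times\ZZ_q\cong\ZZ_{pq}$, using $\gcd(p,q)=1$. So the content is the reverse direction, and the plan is to exploit the presentation machinery built in Section~\ref{sec:additiondeletion}. Since $X$ is simply connected, by Remark~\ref{rem:h1-torsion} the target is $\PP^1$ and the orbifold base point set $B=\{P\}$ has $r=n=0$; since $M_F\neq\emptyset$, we may apply Corollary~\ref{cor:presentationn=0simply} (its hypotheses are met because $\pi_1^{\orb}(\PP^1_{(0,\bar m)})$ is trivial, so $F_*:\pi_1(U)\to\pi_1^{\orb}(\PP^1_{(0,\bar m)})$ is trivially an isomorphism, where $U=X\setminus\mathcal B$). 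This yields a finite presentation of $\pi_1(X_{\{P\}})$ on generators $\SBp$ consisting of the lifted meridians $\gamma_k$ around the points of $M_F$ (at most two of them, by Remark~\ref{rem:A12}), with relations of the form $R_j=\prod_{P_k\in M_F}(\gamma_k^{m_k})^{n_{kj}}$ and the central relations $[\gamma_k,\gamma_i^{m_i}]$ for $P_k,P_i\in M_F$.

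Next I would split into the two cases $\#M_F=1$ and $\#M_F=2$. In the first case, $\bar m=(p,1)$ with $q=1$ (the single multiple fiber has multiplicity $p$), so $\pi_1^{\orb}(\PP^1_{(1,\bar m)})\cong\ZZ_p$; the presentation has a single generator $\gamma_1$ and relations of the form $\gamma_1^{p n_j}=1$, whose greatest common divisor with respect to $j$ I need to show is exactly $p$. This follows because $H_1$ is the abelianization, which is $\ZZ/\gcd_j(pn_j)\ZZ$, and the hypothesis $H_1(X_{\{P\}})=\ZZ_{pq}=\ZZ_p$ forces $\gcd_j(pn_j)=p$; hence $\pi_1(X_{\{P\}})\cong\ZZ_p$ and $F_*$ is the identity-type isomorphism. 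In the second case, $\#M_F=2$, the group is generated by $\gamma_1,\gamma_2$ with $\gamma_1^p$ and $\gamma_2^q$ central, relations $R_j$ that are words in $\gamma_1^p$ and $\gamma_2^q$, and the surjection $F_*$ sends $\gamma_1\mapsto$ generator of $\ZZ_p$, $\gamma_2\mapsto$ generator of $\ZZ_q$, killing $\gamma_1^p$ and $\gamma_2^q$. The kernel $K=\ker F_*$ equals the normal closure $N=\langle\langle \gamma_1^p,\gamma_2^q\rangle\rangle$ by Lemma~\ref{lemma:KequalsN} (applied with $B=\{P\}$, $Q=P\notin B_F$, $B'=M_F$), and since these elements are central, $K=N$ is the abelian subgroup generated by $\gamma_1^p$ and $\gamma_2^q$.

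The main obstacle — and the heart of the argument — is to show that under the hypothesis $H_1(X_{\{P\}})=\ZZ_{pq}$, the central subgroup $K$ generated by $\gamma_1^p,\gamma_2^q$ is actually \emph{trivial}, so that the relations $R_j$ are all trivial and the presentation collapses to $\langle\gamma_1,\gamma_2:\gamma_1^p,\gamma_2^q\rangle=\ZZ_p*\ZZ_q$. The idea is a counting/rank argument: consider the abelianization map. We have $\pi_1(X_{\{P\}})/K\cong\ZZ_p*\ZZ_q$ with abelianization $\ZZ_{pq}$. If $K$ were nontrivial, then since $K$ is central and generated by the two commuting elements $t_1=\gamma_1^p, t_2=\gamma_2^q$, the group $K$ is a finitely generated abelian group that injects (after passing to the relevant quotient) in a way that would enlarge $H_1$: more precisely, one shows via the five-term exact sequence (or directly from the presentation) that $H_1(X_{\{P\}})\twoheadrightarrow \ZZ_{pq}$ has kernel a quotient of $K$, and the hypothesis $|H_1(X_{\{P\}})|=pq$ forces that kernel to vanish, hence $K$ maps to $0$ in $H_1$; combined with $K$ being generated by $t_1,t_2$ and centrality, together with the structure of $\pi_1$ as a central extension $1\to K\to \pi_1(X_{\{P\}})\to \ZZ_p*\ZZ_q\to 1$, one deduces $K=0$ because $\ZZ_p*\ZZ_q$ has trivial Schur multiplier (as $\gcd(p,q)=1$, $H_2(\ZZ_p*\ZZ_q;\ZZ)=H_2(\ZZ_p)\oplus H_2(\ZZ_q)=0$), so every central extension by a group on which $t_1,t_2$ act trivially and which is hit from $H_2$ must split, forcing $K$ to be a quotient of $H_1$-torsion that we've just shown is zero. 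Assembling this gives $\pi_1(X_{\{P\}})\cong\ZZ_p*\ZZ_q$ with $F_*$ the isomorphism. I would double-check the precise bookkeeping of which relations $R_j$ survive and the exactness of the five-term sequence, as that is where a subtle error could hide.
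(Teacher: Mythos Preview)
Your overall strategy is correct and genuinely different from the paper's. The paper stays entirely at the level of the explicit presentation from Corollary~\ref{cor:presentationn=0simply}: it writes the relations as $\gamma_1^{pk_j}\gamma_2^{ql_j}$, first shows $\gcd_j k_j=\gcd_j l_j=1$ by mapping onto $\ZZ_{pk}\times\ZZ_{ql}$, then reduces (using that $\gamma_1^p,\gamma_2^q$ commute) to just two relations $\gamma_1^p\gamma_2^{aq}$ and $\gamma_2^{bq}$ with $\gcd(a,b)=1$, and finally compares the $2\times 2$ presentation matrix of the abelianization with the diagonal matrix $\mathrm{diag}(p,q)$ to force $b=1$, $a=0$. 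Your route instead packages everything into the central extension $1\to K\to\pi_1(X_{\{P\}})\to\ZZ_p*\ZZ_q\to 1$ and kills $K$ homologically; this is more conceptual and avoids the matrix bookkeeping, at the cost of invoking the five-term sequence and the vanishing of $H_2(\ZZ_p*\ZZ_q)$.

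That said, your write-up of the key step is muddled and contains a wrong turn. The sentence ``every central extension \ldots\ must split'' is not correct and is not what you need: $H_2(\ZZ_p*\ZZ_q;\ZZ)=0$ does \emph{not} imply $H^2(\ZZ_p*\ZZ_q;K)=0$ (by universal coefficients there is still an $\mathrm{Ext}(H_1,K)$ term), so central extensions need not split. The clean argument is simply this: in the five-term sequence
\[
H_2(\ZZ_p*\ZZ_q)\longrightarrow K/[G,K]\longrightarrow H_1(G)\longrightarrow H_1(\ZZ_p*\ZZ_q)\longrightarrow 0,
\]
the left term vanishes and $[G,K]=1$ by centrality, so $K$ \emph{injects} into $H_1(G)$. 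Since $H_1(G)\cong\ZZ_{pq}$ by hypothesis and the cokernel $H_1(\ZZ_p*\ZZ_q)$ also has order $pq$, comparing orders gives $K=0$ directly. You have all the ingredients for this, but you should state the injectivity of $K\to H_1(G)$ explicitly rather than detouring through splitting. (Incidentally, $H_2(\ZZ_p*\ZZ_q)=H_2(\ZZ_p)\oplus H_2(\ZZ_q)=0$ holds for any $p,q$; the coprimality is only used for $\ZZ_p\times\ZZ_q\cong\ZZ_{pq}$.)
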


\begin{proof}
After an isomorphism in $\PP^1$, we may assume that $M_F\subset\{[0:1],[1:0]\}$ (see Remark~\ref{rem:2fibers}), 
the fiber above $[0:1]$ has multiplicity $p$, the fiber above $[1:0]$ has multiplicity $q$, and $P\notin\{[0:1],[1:0]\}$.

\eqref{item:2basecasesimply}$\Rightarrow$ \eqref{item:1basecasesimply} is trivial. Let us prove 
\eqref{item:1basecasesimply}$\Rightarrow$ \eqref{item:2basecasesimply}.

Assume that $H_1(X_{\{P\}})=\ZZ_{pq}$. Note that
\begin{itemize}
\item $\#M_F=2$ if and only if $q>1$, and
\item $\#M_F=1$ if and only if $q=1$.
\end{itemize}
Corollary~\ref{cor:presentationn=0simply} applied to $U=\PP^2\setminus\mathcal B$ and Remark~\ref{rem:A12} say that 
$\pi_1(X_{\{P\}})$ has a presentation of the form
\begin{equation}\label{eq:presn=0}
\pi_1(X_{\{P\}})=\langle \gamma_1,\gamma_2\mid \{\gamma_1^{pk_j}\gamma_2^{ql_j}\}_{j\in J}, 
[\gamma_1,\gamma_2^q], [\gamma_2,\gamma_1^{p}]\rangle
\end{equation}
where $k_j,l_j\in\ZZ$ for all $j\in J$. Indeed, this is clear if $\#M_F=b'=2$ (i.e. $q>1$), but it is also true 
if $b'=1$ (i.e. $q=1$), picking $k_1=0$, $l_1=1$. Hence, from now on we assume that $\pi_1(X_{\{P\}})$ has a 
finite presentation as in equation~\eqref{eq:presn=0}.

Let $k=\gcd_{j\in J}(k_{j})$, and $l=\gcd_{j\in J}(l_j)$, with the convention that the greatest common divisor of 
various $0$'s is $0$. Note that this group has a quotient
$$
\langle \gamma_1,\gamma_2 \mid \gamma_1^{pk}, \gamma_2^{ql}, [\gamma_1,\gamma_2]\rangle,
$$
so the quotient map induces an epimorphism on the abelianizations
$$
\ZZ_{pq}\twoheadrightarrow \ZZ_{pk}\times\ZZ_{ql}.
$$
Hence, $k=l=1$.
Using that $\gamma_1^{p}$ and $\gamma_2^{q}$ commute, we can modify the presentation of $\pi_1(X_{\{P\}})$ in 
equation~\eqref{eq:presn=0} to get
\begin{equation}
\label{eq:presn=02}
\langle \gamma_1,\gamma_2 \mid \gamma_1^p\gamma_2^{aq},\gamma_2^{bq}, [\gamma_1,\gamma_2^q], [\gamma_2,\gamma_1^{p}]\rangle,
\end{equation}
where $b\geq 1$, $a\in\{0,\ldots,b-1\}$ and $\gcd(a,b)=1$. Thus, a presentation matrix of the abelianization of 
$\pi_1(X_{\{P\}})$ as a $\ZZ$-module is given by
$$
M=\left(
\begin{array}{cc}
p & 0\\
qa & qb
\end{array}\right)
$$
By hypothesis, we know that the matrix $M$ is equivalent (over $\ZZ$) to the diagonal $2\times 2$ matrix with diagonal $(p,q)$, 
since both matrices present the same abelian group and have the same dimensions and rank. In particular, both matrices have the 
same determinant, so $b=1$, and thus $a=0$. Plugging that data back in for the presentation in equation~\eqref{eq:presn=02}, 
we get that
$$
\pi_1(X_{\{P\}})=\langle \gamma_1, \gamma_{2} \mid \gamma_1^{p}, \gamma_2^{q}\rangle,
$$
and the epimorphism (recall Remark~\ref{rem:inducedorb})
$$
F_*:\pi_1(X_{\{P\}})\to \pi_1^{\orb}(S_{(1,\bar m)})
$$
is in fact an isomorphism by Lemma~\ref{lemma:EpiImpliesIso}.
\end{proof}

\section{Applications}\label{sec:applications}

\subsection{$C_{p,q}$-curves revisited}\label{sec:Cpq}
In this subsection we prove a generalization of Oka's classical result on $C_{p,q}$ curves.

\begin{proof}[Proof of Theorem~\ref{thm:Okapq}]
If $p=q=1$, the result is trivial. Suppose that $p\geq 1$
or $q>1$, which implies that $M_F\neq \emptyset$. Let $P\in\PP^1$ be such that $F^{-1}(P)$ is a typical fiber. In particular, 
$\overline{F^{-1}(P)}$ is given by the zeros of an irreducible degree $pq$ polynomial, and by Remark~\ref{rem:coprime}, 
$H_1(X_{\{P\}})\cong\ZZ_{pq}$. In other words, \eqref{item:2basecasesimply} in Theorem~\ref{thm:basecasesimply} holds, 
and the result for a finite union of generic fibers is proved for $r=0$. The claim for $r>0$ follows from the 
Generic Addition-Deletion Lemma~\ref{lemma:generic-fiber-orbi} applied to $U=\PP^2\setminus \mathcal{B}$. 

Let $\PP^1_{\bar m}$ be the maximal orbifold structure of $\PP^1\setminus\{[0:1]\}$ with respect to 
$F_|:\PP^2\setminus V(f_p)=U_{\{[0:1]\}}\to\PP^1\setminus\{[0:1]\}$, and note that $\pi_1^{\orb}(\PP^1_{\bar m})\cong \ZZ_p$.
Assume that $\pi_1(\PP^2\setminus V(f_p))\cong \ZZ_p$. In that case, 
$F_*:\pi_1(\PP^2\setminus V(f_p))\to \pi_1^{\orb}(\PP^1_{\bar m})$ is an epimorphism between $\ZZ_p$ and itself, 
so it is an isomorphism. By the Generic Addition-Deletion Lemma~\ref{lemma:generic-fiber-orbi} one obtains
$\pi_1\left(\PP^2\setminus (C\cup V(f_p))\right)=\FF_{r+1}*\ZZ_p$.
\end{proof}

\begin{rem}
Examples of these families also appear in Exercise \S4(4.21) of Dimca's reference book~\cite{Dimca-singularities}. 
Theorem~\ref{thm:Okapq} can serve as a proof for part (ii) of this exercise.
\end{rem}

\subsection{Fundamental group of a union of conics}\label{sec:conics}
Another instance where our results apply is given in a collection of conics in a pencil. We provide a new proof 
of Theorems 2.2 and 2.5 in~\cite{Amram-ErratumFundamentalQuadric} which does not depend on braid monodromy calculations. 

\begin{thm}
\label{thm:Amram}
Let $F=[f_2:f_1^2]$ be a pencil generated by a smooth conic $C_0=V(f_2)$ and a double line $\ell=V(f_1)$.
Consider $C=C_0\cup\dots\cup C_r$ a union of $r+1$ smooth conics of $F$, then 
$$\pi_1(\PP^2\setminus C)=\FF_r*\ZZ_2.$$
\end{thm}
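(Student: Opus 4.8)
The plan is to recognize Theorem~\ref{thm:Amram} as the special case $(p,q)=(2,1)$ of Theorem~\ref{thm:Okapq}, taking $f_p=f_2$ (the quadratic form cutting out the smooth conic $C_0$) and $f_q=f_1$ (the linear form cutting out $\ell$), so that $[f_p^{q}:f_q^{p}]=[f_2:f_1^{2}]=F$. Three of the four hypotheses of Theorem~\ref{thm:Okapq} are immediate: $f_2$ is irreducible because $C_0=V(f_2)$ is smooth, and $f_1$ is linear, so neither is a $k$-th power for any $k\geq 2$; they have no common factor (else $f_2$ would be reducible), so $F$ is a well-defined morphism on $\PP^2\setminus\cB$ with $\cB=V(f_2)\cap V(f_1)$ finite; and since $f_2$ and $f_1$ are both irreducible, $F$ has connected generic fibers. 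The only point needing verification --- because here $q=1<2$ --- is that the multiple fibers of $F$ lie over $\{[0:1],[1:0]\}$.

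For that I would show that the double line $2\ell=V(f_1^{2})$, which is the fiber over $[1:0]$, is the only non-reduced member of the pencil. Indeed, the members are $\beta f_2-\alpha f_1^{2}$ for $[\alpha:\beta]\in\PP^1$, and a non-reduced conic is a double line; so a non-reduced member with $[\alpha:\beta]\neq[1:0]$ (i.e. $\beta\neq 0$) would be of the form $c\,L^{2}$ for some linear form $L$, whence $f_2=\tfrac{\alpha}{\beta}f_1^{2}+\tfrac{c}{\beta}L^{2}$ is a linear combination of two squares of linear forms, hence a quadratic form of rank at most $2$ --- contradicting the smoothness of $C_0$. Thus $M_F=\{[1:0]\}$, the fiber there has multiplicity $2$, and the fourth hypothesis of Theorem~\ref{thm:Okapq} holds. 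That theorem then yields $\pi_1(\PP^2\setminus C)\cong\FF_r*\ZZ_2*\ZZ_1\cong\FF_r*\ZZ_2$ for $C$ a union of $r+1$ distinct generic fibers; and since the atypical values of $F$ are precisely those of its finitely many singular members, every smooth conic of the pencil is the closure of a generic fiber --- in particular $C_0=V(f_2)$ lies over the typical value $[0:1]$ and may be taken as one of the $C_i$, so the ``moreover'' clause of Theorem~\ref{thm:Okapq} is not invoked, and the statement follows.

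Equivalently, one may re-run the proof of Theorem~\ref{thm:Okapq} directly, which makes the $q=1$ bookkeeping most transparent: for $r=0$, pick $P$ with $\overline{F^{-1}(P)}=C_0$; then $X_{\{P\}}=\PP^2\setminus C_0$ (as $\cB\subset C_0$) and $H_1(\PP^2\setminus C_0)\cong\ZZ_2$ by Remark~\ref{rem:coprime}, since $C_0$ is irreducible of degree $2$. Theorem~\ref{thm:basecasesimply} --- applicable because $\PP^2$ is simply connected, $M_F\neq\emptyset$, and $\bar m=(2,1)$ (with $\gcd(2,1)=1$) --- then upgrades the natural epimorphism to an isomorphism $\pi_1(\PP^2\setminus C_0)\xrightarrow{\sim}\pi_1^{\orb}(\PP^1_{(1,(2,1))})\cong\ZZ_2$, and applying the Generic Addition--Deletion Lemma~\ref{lemma:generic-fiber-orbi} $r$ times to $F\colon\PP^2\setminus\cB\to\PP^1$ to adjoin the remaining conics $C_1,\dots,C_r$ inserts one free $\ZZ$ factor at each step, giving $\pi_1(\PP^2\setminus C)\cong\FF_r*\ZZ_2$. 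The one genuine obstacle in either route is the rank argument above --- ruling out any non-reduced member besides the prescribed double line --- while everything else is bookkeeping with results already in place, without any Zariski--Van Kampen or braid monodromy computation.
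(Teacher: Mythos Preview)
Your proposal is correct and takes essentially the same approach as the paper, which simply states that the result follows from Theorem~\ref{thm:Okapq} with $p=2$, $q=1$. You supply the details the paper omits --- in particular the rank argument ruling out any non-reduced member other than the double line, which is the one hypothesis of Theorem~\ref{thm:Okapq} not automatic when $q=1$ --- and your alternative direct route via Theorem~\ref{thm:basecasesimply} and Lemma~\ref{lemma:generic-fiber-orbi} merely unpacks the proof of Theorem~\ref{thm:Okapq} itself.
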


\begin{proof}
This result can be obtained from Theorem~\ref{thm:Okapq} for $p=2$, $q=1$.
\end{proof}

\subsection{Fundamental group of tame torus-type sextics}\label{sec:TorusType}
In a remarkable paper, Oka-Pho~\cite{Oka-Pho-fundamental} describe the fundamental group of the complement of
irreducible sextics in a pencil of type $F=[f_2^3:f_3^2]$, where $f_i$ is a homogeneous form of degree $i$, 
whose set of singular points are base points of the pencil, that is, $\Sing V(f)=V(f_2)\cap V(f_3)$. 
The term \emph{torus type} refers to the former property and the term \emph{tame} refers to the latter.
According to the authors, any such a sextic $C$ satisfies $\pi_1(\PP^2\setminus C)=\ZZ_2*\ZZ_3$ except for 
the particular case where $C$ has four singular points: one of type $C_{3,9}$ and three of type $A_2$. 
Type $C_{3,9}$ singularities have a local equation $f(x,y)=x^3+y^9+x^2y^2\in \CC\{x,y\}$, and $A_2$ singularities
are ordinary cusps of local equation $f(x,y)=x^2+y^3\in \CC\{x,y\}$.

It is enough to check the result on maximal irreducible tame torus-type sextics, that is, those with maximal 
total Milnor number (either 19 or 20).
According to~\cite{Oka-Pho-fundamental} there are seven types of such irreducible curves, which can be described 
by the configuration $\Sigma$ of singularities (see Table~\ref{table:maximal}), since their moduli space is 
connected for each configuration. Also, by the maximality of the total Milnor number, the multiple fibers 
$C_2:=V(f_2)$ and $C_3:=V(f_3)$ are uniquely determined by the moduli space of the curve $V(f_2)\cup V(f_3)$, 
that is, they depend only on the singularities of $C_2$ and $C_3$ and the topological type of their intersection.
Such moduli spaces are connected in all cases.
{\small
\begin{center}
\begin{table}[ht]
\begin{tabular}{|c|c|c|c|}\hline
\rule{0pt}{4ex} 
 & $\Sigma$ & $(\mu,r,\delta)$ & $f_2$, $f_3$, $C=\{f_2^3+f_3^2=0\}$
\\[2ex]\hline\hline
\rule{0pt}{4ex} 
1 & $[C_{3,15}]$ & $[(19,2,10)]$ & $\array{l}f_2=yz-x^2 \\ f_3=40y^3+21xyz-21x^3\endarray$ \\[2ex]\hline
\rule{0pt}{4ex} 
2 & $[C_{9,9}]$ & $[(19,2,10)]$ & $\array{l}f_2=y^2-x^2 \\ f_3=2y^2z-2x^2z+\frac{32}{27}x^3\endarray$ 
\\[2ex]\hline
\rule{0pt}{4ex} 
3 & $[C_{3,7},A_8]$ & $[(11,2,6),(8,1,4)]$ & $\array{l}f_2=yz-x^2 \\
f_3=\frac{23}{27}x^3-\frac{4}{9}x^2y+xyz+\frac{4}{9}xy^2-\frac{4}{27}y^3\endarray$
\\[2ex]\hline
\rule{0pt}{4ex} 
4 & $[Sp_1,A_2]$ & $[(18,1,9),(2,1,1)]$ & $\array{l}f_2=xy \\ f_3=y^2z-y^3-x^3\endarray$
\\[2ex]\hline
\rule{0pt}{4ex} 
5 & $[B_{3,10},A_2]$ & $[(18,1,9),(2,1,1)]$ & $\array{l}f_2=yz-y^2-x^2 \\ 
f_3=y(yz-y^2-x^2+xy+\frac{18}{25}y^2)\endarray$
\\[2ex]\hline
\rule{0pt}{4ex} 
6 & $[B_{3,8},E_6]$ & $[(14,1,7),(6,1,3)]$ & $\array{l}f_2=y^2-2yz+z^2+x^2-z^2 \\ f_3=x^2y\endarray$
\\[2ex]\hline
\rule{0pt}{4ex} 
7 & $[C_{3,9},3A_2]$ & $[(13,2,7),(2,1,1)]$ & $\array{l}f_2=yz-x^2 \\
f_3=y(yz+\frac{4}{3} y^2+\frac{3\sqrt{3}}{2} x+\frac{2\sqrt{3}}{3}xy+y^2)\endarray$
\\[2ex]\hline
\end{tabular}
\vspace*{7pt}
\caption{Configuration of singularities for maximal tame sextics of torus type $(2,3)$.}
\label{table:maximal}
\end{table}
\end{center}}

The following recovers the well known result by Oka-Pho~\cite{Oka-Pho-fundamental} on irreducible maximal 
tame torus sextic of type $(2,3)$ for families 1-6 in Table~\ref{table:maximal}.
For the sake of brevity we will only show the details for family 1, but the same strategy can be followed 
to prove the remaining cases. 

\begin{thm}[\cite{Oka-Pho-fundamental}]
\label{thm:OkaPho}
Let $C=\{f_2^3+f_3^2=0\}$ be an irreducible maximal tame torus sextic of type $(2,3)$ whose configuration of 
singularities $\Sigma_C\neq \{[C_{3,9},3A_2]\}$ (see Table~\ref{table:maximal}). Then
$$
\pi_1(\PP^2\setminus C)=\ZZ_2*\ZZ_3.
$$
\end{thm}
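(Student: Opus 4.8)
The plan is to deduce Theorem~\ref{thm:OkaPho} from Theorem~\ref{thm:Okapq}, whose engine is the base case Theorem~\ref{thm:basecasesimply}, by applying it to the pencil $F=[f_2^3:f_3^2]\colon\PP^2\dashrightarrow\PP^1$ attached to the torus decomposition, with base locus $\mathcal B=V(f_2)\cap V(f_3)$. For each of the configurations $1$--$6$ I would check the hypotheses of Theorem~\ref{thm:Okapq} for the data $(p,q,f_p,f_q)=(2,3,f_2,f_3)$: neither $f_2$ nor $f_3$ is a $k$-th power (immediate from Table~\ref{table:maximal}, each $f_2$ being a rank-$3$ conic or a product of two distinct lines and each $f_3$ not a cube); $f_2$ and $f_3$ are coprime (else $C=\{f_2^3+f_3^2=0\}$ would be non-reduced, contradicting its irreducibility); $M_F\subseteq\{[0:1],[1:0]\}$ because $\deg f_2,\deg f_3\geq 2$; and $F$ has connected generic fibers. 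For the last point, which is not automatic when $f_3$ is reducible (configurations $5$ and $6$), a Stein factorization $F=\psi\circ F'$ with $F'$ of connected fibers and $\psi$ finite of degree $d$ must satisfy $d\mid 3$ and $d\mid 2$: indeed $F^*([0:1])=3\,V(f_2)$ and $F^*([1:0])=2\,V(f_3)$ are multiple fibers of multiplicity exactly $3$ and $2$ supported on irreducible reduced curves, so $d=1$ and $F$ is admissible.

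The decisive point is then that the sextic $C=\overline{F^{-1}([1:-1])}$ is a \emph{generic} (typical) fiber of $F$, i.e.\ $[1:-1]\notin B_F$: once this is known, Theorem~\ref{thm:Okapq} with $r=0$ gives $\pi_1(\PP^2\setminus C)\cong\ZZ_2*\ZZ_3$ and the proof is complete (alternatively, one invokes Theorem~\ref{thm:basecasesimply} directly, noting $H_1(\PP^2\setminus C;\ZZ)\cong\ZZ_{6}=\ZZ_{pq}$ by Remark~\ref{rem:coprime} since $C$ is an irreducible sextic). Tameness gives $\Sing C=\mathcal B$, so $C$ has no singularities off the base locus; the only way $[1:-1]$ could be atypical would be a \emph{resonance} at some $p\in\mathcal B$, meaning that at $p$ the curve $C$ acquires a strictly worse singularity than a general member $\{bf_2^3-af_3^2=0\}$ of the pencil. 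I would rule this out by a local computation with the explicit forms of Table~\ref{table:maximal}. For configuration $1$, with $f_2=yz-x^2$, $f_3=40y^3+21xyz-21x^3$ and the unique base point $p$ (where $V(f_2)$ and $V(f_3)$ meet with multiplicity $6$), one has the local identity $f_3=21x\,f_2+40y^3$, hence $f_3\equiv 40x^6$ along $V(f_2)$; comparing with a general member then shows that $C$ has the same $C_{3,15}$ singularity at $p$ as the generic fiber, hence the same total Milnor number $19$, so $[1:-1]\notin B_F$. Configurations $2$--$6$ are handled in the same way.

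I expect this typicality check to be the main obstacle, and it is exactly where configuration $7$ (the excluded $[C_{3,9},3A_2]$) behaves differently: there a resonance at the base point carrying the $C_{3,9}$ does occur, $C$ is an \emph{atypical} fiber of $F$, Theorem~\ref{thm:Okapq} does not apply, and indeed $\pi_1(\PP^2\setminus C)\not\cong\ZZ_2*\ZZ_3$ by Oka--Pho. Everything else---admissibility of $F$, the computation of $M_F$, and the homology computation---is routine, so for each of the six admissible configurations the proof reduces to this single local verification at the base points.
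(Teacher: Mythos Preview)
Your overall strategy coincides with the paper's: reduce to Theorem~\ref{thm:Okapq} applied to the pencil $F=[f_2^3:f_3^2]$, the crux being that $C$ is a \emph{typical} fiber. Two points deserve comment.

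First, a small inaccuracy in your connectedness argument: you assert that $V(f_2)$ and $V(f_3)$ are ``irreducible reduced curves'', but this fails in several rows of Table~\ref{table:maximal} (e.g.\ $f_2=y^2-x^2$ in family~$2$, $f_2=xy$ in family~$4$, $f_3=x^2y$ in family~$6$, the last not even reduced). The fix is simpler than your Stein-factorization argument: since $C=\overline{F^{-1}([1:-1])}$ is irreducible and reduced by hypothesis, any Stein factor $\psi$ is totally ramified over $[1:-1]$ with a single reduced preimage, forcing $\deg\psi=1$.

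Second, on the verification that $[1:-1]\notin B_F$, your route and the paper's diverge. The paper performs the explicit resolution of indeterminacies of $F$ (its Figure~\ref{fig:resolution} for family~$1$), reads off that the dicritical divisors map $1:1$ to $\PP^1$ so no degeneration occurs along the exceptional locus, and then invokes tameness to conclude $C$ has no singularities off $\mathcal B$; together these force $C$ to be generic. You instead propose a local equisingularity check at each base point: show that $C$ acquires the \emph{same} singularity type there as a general member of the pencil (e.g.\ $C_{3,15}$ at the unique base point in family~$1$), whence by tameness the total Milnor number and hence the Euler characteristic of $C\setminus\mathcal B$ matches the generic fiber. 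Both arguments are valid; the paper's blow-up computation is more geometric and yields the fiber structure directly, while your numerical comparison avoids resolving the pencil but requires identifying the generic local singularity type at each base point (which in practice amounts to the same local calculation). Note that ``same Milnor number'' alone is not quite what you need---you need equisingularity (same topological type) at each base point, which is what you actually check when you say ``the same $C_{3,15}$ singularity''.
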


\begin{proof}
The idea of the proof is to show that any irreducible maximal tame torus sextic of type $(2,3)$ except for 
exceptional case $\Sigma_C\neq \{[C_{3,9},3A_2]\}$ (family~$(7)$ in Table~\ref{table:maximal}) is a generic member 
of a primitive pencil satisfying the conditions in Theorem~\ref{thm:Okapq}.

We will do it in detail for $C\in \cM([C_{3,15}])$. Table~\ref{table:maximal} gives a possible equation 
$C=\{f_2^3+f_3^2=0\}$ for such a curve as a member of a pencil generated by a smooth conic $C_2=\{f_2=0\}$ 
and a nodal cubic $C_3=\{f_3=0\}$ whose node $P\in C_2$ is such that $(C_2,C_3)_P=6$, that is, $C_2\cap C_3=\{P\}$
(see~\cite[Thm. 1]{Oka-Pho-classification}). To see that $C$ is in fact a generic member one can obtain the 
resolution of indeterminacies is shown in Figure~\ref{fig:resolution} where
$\hat F^*([0:1])=3C_2+E_{2,1}+2E_{2,2}+3E_{2,3}+E_{2,4}$, $\hat F^*([1:0])=2C_3+E_{3,1}+E_{3,2}$.

\begin{figure}[ht]
\begin{center}
{\small
\begin{tikzpicture}[scale=.6]
\draw (-4,0)--(4,0) node[right] {$D_1(-2)$};
\draw (-4,-6)--(4,-6) node[right] {$D_2(-2)$};
\draw[line width=1.2,color=red] (-2.5,-2)--(-1.5,1) node[right,color=black] {$E_{2,1}(-2)$};
\draw[line width=1.2,color=red] (-2.5,-1)--(-1.5,-3) node[right,color=black] {$E_{2,2}(-2)$};
\draw[line width=1.2,color=red] (-1.5,-2)--(-2.5,-5) node[left,color=black] {$E_{2,3}(-2)$};
\draw[line width=1.2,color=red,arrows=<-] (-2.5,-3) node[above left,color=black] {$C_2(-1)$} to[out=0,in=180] ((-1,-5);
\draw[line width=1.2,color=red] (-2.5,-4)--(-1.5,-7) node[right,color=black] {$E_{2,4}(-3)$};
\draw[line width=1.2,color=blue] (2.5,-2)--(1.5,1) node[right,color=black] {$E_{3,1}(-2)$};
\draw[line width=1.2,color=blue] (2.5,-4)--(1.5,-7) node[right,color=black] {$E_{3,2}(-2)$};
\draw[line width=1.2,color=blue,arrows=<-] (3,-1) node[right,color=black] {$C_3(-1)$} to[out=180,in=180] ((3,-5);
\end{tikzpicture}}
\caption{Resolution of indeterminacy}
\label{fig:resolution}
\end{center}
\end{figure}
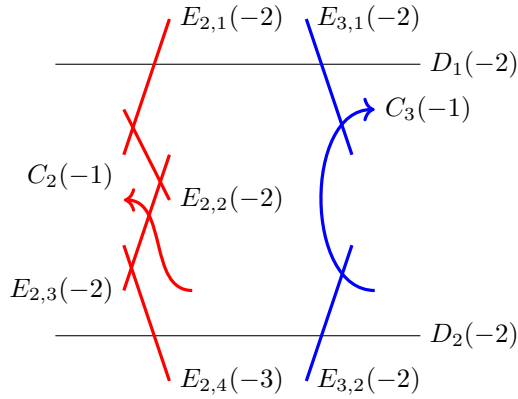

The dicritical divisors $D_1$ and $D_2$ define 1:1 morphisms $\hat F_|:D_i\to \PP^1$ so there is no degeneration of
fibers on the dicritical divisors. One can check that $C$ has no singularities 
outside the base point $P$. This implies that $C$ is a generic fiber.
\end{proof}

\begin{rem}\label{rem:notgen}
Let $C=\{f_2^3+f_3^2=0\}$ be the curve in the moduli space $\cM([C_{3,9},3A_2])$ given by the equation corresponding to 
family~$(7)$ in Table~\ref{table:maximal}. The curve $C$ is not a generic sextic in the pencil $[f_2^3:f_3^2]$. If it were,
Theorem~\ref{thm:basecasesimply} would contradict $\pi_1(\PP^2\setminus C)\ncong\ZZ_2*\ZZ_3$ (Oka-Pho). 
Nonetheless, one can check directly that $C$ is not a typical fiber, since this pencil is of type $(1,6)$, but $C$ is a 
rational curve. This is a consequence of $C$ being irreducible and $\delta(C_{3,9})+3\delta(A_2)=7+3=10$ 
(see Table~\ref{table:maximal}).
\end{rem}

\begin{thm}
\label{thm:OkaPho-1multiple}
Let $F=[f_2^3:f_3^2]:\PP^2\dasharrow \PP^1$ be a pencil such that $C=\{f_2^3+f_3^2=0\}$ is an
irreducible maximal tame torus sextic of type $(2,3)$ and a generic member of the pencil $F$.
Let $\Sigma_C$ be its configuration of singularities. Consider $B\subset\PP^1$ be a collection
of $r+1$ typical values, $C_B=\cup_{\lambda\in B}C_\lambda$, and $C_j=V(f_j)$ for $j=2,3$.
\begin{enumerate}
\item \label{item:OkaPho1} 
If $\Sigma_C\neq \{B_{3,10},A_2\},\{B_{3,8},E_6\},\{C_{3,9},3A_2\}$, namely, if $C$ is a curve in a family $(1)-(4)$, then
$$
\pi_1(\PP^2\setminus C_B\cup C_3)=\FF_{r+1}*\ZZ_3.
$$
\item \label{item:OkaPho2}
If $\Sigma_C\neq \{Sp_1,A_2\}, \{C_{3,9},3A_2\}$, namely, $C$ is a curve in a family $(1)-(3), (5)$ or $(6)$, then
$$
\pi_1(\PP^2\setminus C_B\cup C_2)=\FF_{r+1}*\ZZ_2.
$$
\end{enumerate}
\end{thm}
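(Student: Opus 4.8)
The plan is to read off both statements from the \emph{moreover} part of Theorem~\ref{thm:Okapq}. The key observation is that $C_2=V(f_2)$ and $C_3=V(f_3)$ are precisely the two multiple fibers of the pencil $F=[f_2^3:f_3^2]$ — over $[0:1]$ with multiplicity $3$ and over $[1:0]$ with multiplicity $2$ — and that the base locus $\mathcal{B}=V(f_2)\cap V(f_3)$ lies on each of them, so that removing one of these fibers from $\PP^2$ is the same as removing the corresponding multiple fiber in the setting of Section~\ref{sec:additiondeletion}.

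For part~\eqref{item:OkaPho2} I would write the sextic as $f_p^q+f_q^p$ with $f_p=f_2$, $f_q=f_3$ (so $p=2$, $q=3$) and check the hypotheses of Theorem~\ref{thm:Okapq}: $\gcd(p,q)=1$; since $C$ is an irreducible tame torus sextic and a generic member of $F$, the forms $f_2,f_3$ are not proper powers and have no common factor (their common zero set being the finite set $\mathcal{B}$), $F$ has connected generic fibers, and $M_F\subset\{[0:1],[1:0]\}$ because $p,q\geq 2$ (Remark~\ref{rem:2fibers}). If moreover $V(f_2)$ is irreducible with $\pi_1(\PP^2\setminus V(f_2))\cong\ZZ_2$, the \emph{moreover} part of Theorem~\ref{thm:Okapq} gives exactly $\pi_1(\PP^2\setminus(C_B\cup C_2))\cong\FF_{r+1}*\ZZ_2$, with $\#B=r+1$ playing the role of the $r+1$ generic fibers there (and recall Theorem~\ref{thm:Okapq} is proved by iterating the Generic Addition-Deletion Lemma~\ref{lemma:generic-fiber-orbi}, hence it applies to any $r+1$ typical fibers). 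For part~\eqref{item:OkaPho1} I would instead read the sextic as $g_p^q+g_q^p$ with $g_p=f_3$, $g_q=f_2$ (so $p=3$, $q=2$); the pencil $[f_3^2:f_2^3]$ is $F$ followed by the coordinate swap on $\PP^1$, the same hypotheses hold, and the \emph{moreover} part now yields $\pi_1(\PP^2\setminus(C_B\cup C_3))\cong\FF_{r+1}*\ZZ_3$ provided $V(f_3)$ is irreducible with $\pi_1(\PP^2\setminus V(f_3))\cong\ZZ_3$.

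It then remains to verify these two provisos for the families that are not excluded, which is a finite inspection of Table~\ref{table:maximal} (recall that by Theorem~\ref{thm:OkaPho} families $(1)$--$(6)$ are exactly the ones for which $C$ can be realized as a generic member with the prescribed singularities, family $(7)$ being ruled out by Remark~\ref{rem:notgen}). For part~\eqref{item:OkaPho1}: in families $(1)$--$(4)$ the form $f_3$ defines an irreducible cubic — nodal in $(1),(2),(3)$, cuspidal in $(4)$ — whose complement has fundamental group $\cong\ZZ_3$ (it is $H_1(\PP^2\setminus V(f_3))\cong\ZZ_3$ by Remark~\ref{rem:coprime}, the abelianity being classical for smooth, nodal and cuspidal cubics), so the proviso holds; in families $(5),(6),(7)$ the form $f_3$ factors (a line times a conic, or two lines), $\pi_1(\PP^2\setminus V(f_3))$ surjects onto $\ZZ$, and both the argument and the conclusion (on abelianizations) break down, consistently with the exclusion. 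For part~\eqref{item:OkaPho2}: in families $(1),(2),(3),(5),(6)$ the form $f_2$ defines an irreducible conic, with $\pi_1(\PP^2\setminus V(f_2))\cong\ZZ_2$, so the proviso holds, while in families $(4),(7)$ the form $f_2$ is reducible and the method does not apply.

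Beyond invoking Theorem~\ref{thm:Okapq} and the bookkeeping with the two orbifold points of the pencil, the only genuine content is this last case-by-case check, whose sole mildly delicate ingredient is the classical fact that the complement of an \emph{irreducible} plane cubic — cuspidal ones included — has abelian fundamental group $\ZZ_3$. As in the proof of Theorem~\ref{thm:OkaPho}, I would write out the details only for family $(1)$ (where $f_2=yz-x^2$ is a smooth conic and $f_3=40y^3+21xyz-21x^3$ a nodal cubic, so both provisos hold) and note that the remaining families are handled identically from the equations in Table~\ref{table:maximal}.
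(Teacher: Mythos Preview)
Your proposal is correct and is essentially the paper's own argument: the paper verifies directly that $C_3$ is an irreducible (nodal or cuspidal) cubic in families $(1)$--$(4)$ and $C_2$ an irreducible smooth conic in families $(1)$--$(3),(5),(6)$, so that $\pi_1(\PP^2\setminus C_j)\cong\ZZ_j$, and then invokes the Generic Addition-Deletion Lemma~\ref{lemma:generic-fiber-orbi}; you route through the \emph{moreover} clause of Theorem~\ref{thm:Okapq}, whose proof is precisely that same lemma, and you even note this. The case-by-case inspection of Table~\ref{table:maximal} is identical in both.
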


\begin{proof}
For the proof of part~\eqref{item:OkaPho1}, note that these are the only families where $C_3$ is irreducible.
As mentioned before Table~\ref{table:maximal}, the curves $C_3$ are well defined.
In families $(1)-(3)$, the curve $C_3$ is a nodal cubic, and in family $(4)$, it is a cuspidal cubic transversal to the
line at infinity. In both of these cases, $\pi_1(\PP^2\setminus C_3)\cong \ZZ_3$. 
The result now follows from the Generic Addition-Deletion Lemma~\ref{lemma:generic-fiber-orbi}.

For the proof of part~\eqref{item:OkaPho2}, note that these are the families where $C_2$ is irreducible (a smooth conic). Note that family~$(7)$ also has irreducible $C_2$, but it still does not satisfy the 
hypothesis, since $C$ is not a typical fiber in that pencil (Remark~\ref{rem:notgen}). Since $C_2$ is smooth, 
$\pi_1(\PP^2\setminus C_2)\cong \ZZ_2$. The result follows from the Generic Addition-Deletion 
Lemma~\ref{lemma:generic-fiber-orbi}.
\end{proof}

\end{document}